\documentclass [12pt]{article}
\usepackage{latexsym,amssymb,amsfonts,amsmath,amsthm,graphicx}
\usepackage{hyperref}
\usepackage{color}

\newtheorem{lemma}{Lemma}[section]
\newtheorem{proposition}{Proposition}[section]

\newtheorem{theorem}{Theorem}[section]
\newtheorem{remark}{Remark}[section]

\newcommand{\s}{\vspace{2ex}}
\newcommand{\n}{\noindent}
\newcommand{\e}{\varepsilon}
\newcommand{\R}{\mathbb{R}}
\newcommand{\Z}{\mathbb{Z}}

\title{Instability of a nonlinear oscillator with small friction and small additive noise}

\author{Peter H. Baxendale} 

\AtEndDocument{\bigskip{\footnotesize
  \noindent
  \textsc{Peter H. Baxendale. Department of Mathematics, University of Southern California, CA 90089, USA.}
  \textit{E-mail address}: \texttt{\href{mailto:baxendal@usc.edu}{baxendal@usc.edu}}
 }}

\begin{document}

\maketitle

\begin{abstract}

Let $\lambda = \lambda(\beta,\sigma,a,b)$ denote the top Lyapunov exponent for the linearization along trajectories of the noisy damped non-linear oscillator $\ddot{x}+\beta \dot{x} + ax+bx^3 = \sigma \dot{W}_t$, where $a$, $b$ and $\beta$ are all positive and $\sigma \neq 0$.  In 2004 Arnold, Imkeller and Sri Namachchivaya stated without proof that $\lambda(\e^2 \beta,\e \sigma,a,b) \sim \overline{\lambda} \e^{2/3}$ as $\e \to 0$ with $\overline{\lambda} > 0$.  This paper contains a proof of this assertion.   

\end{abstract}

\s

\n Keywords: non-linear oscillator, Lyapunov exponent, Hamiltonian system, Furstenberg-Khas'minskii formula.\\
2020 Mathematics Subject Classification: 60H10, 60J60, 37H15, 37A30.

\section{Introduction} \label{sec intro}

Consider the stochastic nonlinear oscillator equation
 \begin{equation} \label{nonlin}
   \ddot{x}_t + \beta \dot{x}_t + {\cal U}'(x_t) = \sigma \dot{W}_t
  \end{equation}
with $\beta > 0$ and $\sigma \neq 0$, where ${\cal U}(x)$ is the single well potential ${\cal U}(x) = ax^2/2+ bx^4/4$ with $a,b > 0$.  In phase space we have the system
\begin{equation} \label{add ab}
  \begin{split}
     dx_t &  =y_t\,dt\\
     dy_t & = (- ax_t -bx_t^3 -  \beta y_t)\,dt+ \sigma \circ dW_t.
    \end{split}
    \end{equation}    
It is well known, see for example Mattingly, Stuart and Higham \cite{MSH02}, that the process $\{(x_t,y_t): t \ge 0\}$ is positive recurrent on $\R^2$ with unique invariant probability measure $\mu$ with density $\rho(x,y)$ of the form 
   \begin{equation} \label{density}
    \rho(x,y) = C \exp\left\{\frac{-2\beta}{\sigma^2}\left(\frac{ax^2}{2}+ \frac{bx^4}{4}+ \frac{y^2}{2}\right)\right\}
              \end{equation}
where $C > 0$ is the normalizing constant.   
Linearizing \eqref{add ab} along the trajectory $\{(x_t,y_t): t \ge 0\}$ we get the derivative process $\{v_t: t \ge 0\}$ in $\R^2$ given by  
   \begin{equation} \nonumber 
      dv_t = \begin{bmatrix} 0 & 1 \\-a-3bx_t^2 & - \beta \end{bmatrix}v_t dt.
   \end{equation}   
The (top) Lyapunov exponent is the almost sure limit
    \begin{equation} \label{lam ab}
      \lambda(\beta,\sigma,a,b) = \lim_{t \to \infty} \frac{1}{t} \log \|v_t\|.
      \end{equation}
We will see in Proposition \ref{prop FK cart} that the right side of \eqref{lam ab} is an almost sure limit with the same value for all initial values $(x_0,y_0,v_0)$ with $v_0 \neq 0$.

Arnold, Imkeller and Sri Namachchivaya \cite{AIN04} study the case of small noise and small dissipation by replacing  $\sigma \rightsquigarrow \e \sigma$ and $\beta \rightsquigarrow \e^2 \beta$, and they state without proof that $\lambda(\e^2 \beta,\e \sigma,a,b) \sim \e^{2/3} \overline{\lambda}$ as $\e \to 0$ for some constant $\overline{\lambda} > 0$.  In this paper we present a proof of this assertion.

\s

Replacing $(x_t,y_t,W_t)$ with $(Bx_{At},ABy_{At},A^{-1/2}W_{At})$ gives a version of \eqref{add ab} with the parameters $(\beta,\sigma,a,b)$ replaced with parameters $(A\beta, A^{3/2}\sigma/B, A^2a, A^2B^2 b)$.  It follows that
    $$
    \lambda( \beta, \sigma, a,b) = \frac{1}{A} \lambda(A\beta, A^{3/2}\sigma/B, A^2 a/B, A^2B^2b)
    $$
for all $A, B > 0$, and in particular
      $$
    \lambda(\beta,   \sigma, a,b) = a^{1/2} \lambda( a^{-1/2}\beta,   a^{-5/4} b^{1/2} \sigma,1,1).
    $$
So without loss of generality we may assume $a=b=1$.     
With $a=b=1$ and the replacement $\beta \rightsquigarrow \e^2 \beta$ and $\sigma \rightsquigarrow \e \sigma$ the system becomes
   \begin{equation} \label{add eps}
  \begin{split}
     dx_t &  =y_t\,dt\\
     dy_t & = (- x_t -x_t^3 -  \e^2\beta y_t)\,dt+ \e \sigma \circ dW_t.
    \end{split}
    \end{equation} 

\s

The system \eqref{add eps} may be written in the form   
 \begin{equation} \label{sde}
    d(x_t,y_t) = \overline{\nabla} H(x_t,y_t)dt + \e^2V_0(x_t,y_t)dt+ \e V_1(x_t,y_t) \circ dW_t
    \end{equation}
representing the Hamiltonian flow for a function $H(x,y)$ with a small perturbation as $\e \to 0$ determined by the drift and noise vector fields $V_0(x,y)$ and $V_1(x,y)$.   The calculations in this paper start with the framework of  Baxendale and Goukasian \cite{BG02} which deals with \eqref{sde} in the case when $H(x,y)$ has a single critical point at $(0,0)$ and $H(x,y) \to \infty$ as $\|(x,y)\| \to \infty$ and $V_0(x,y)$ and $V_1(x,y)$ are general vector fields.   
 
We will use the general notation of \eqref{sde} as long as possible, before restricting to the special case \eqref{add eps} by replacing the general function $H$ and vector fields $V_0$ and $V_1$ with the specific formulas
\begin{equation} \label{H}
H(x,y) = {\cal U}(x)+ y^2/2= x^2/2+ x^4/4 + y^2/2
  \end{equation} and 
 \begin{equation} \label{V}
 V_0(x,y) = \begin{bmatrix} 0 \\ - \beta y \end{bmatrix}, \qquad V_1(x,y) = \begin{bmatrix} 0 \\ \sigma \end{bmatrix}.
 \end{equation}  
The reader is cautioned to distinguish carefully between results for the general system \eqref{sde} and results for the particular system \eqref{add eps}.

\subsection{Main result} \label{sec main} 

Define orthogonal vector fields 
  \begin{equation} \label{U}
   U_1(x,y) = \overline{\nabla} H(x,y) \quad \quad \mbox{ and } \quad \quad
       U_2(x,y) = \frac{\nabla H(x,y)}{\|\nabla
H(x,y)\|^2}
    \end{equation} for $(x,y) \neq (0,0)$.  Using the notation $V.F$ to denote the action of a vector field $V$ as a differential operator on a function $F$, we have $U_1.H(x,y) = 0$ and $U_2.H(x,y) = 1$.

For $h > 0$ let the probability measure $m_h$ denote the normalized occupation time measure for the Hamiltonian flow around the orbit $H^{-1}(h) = \{(x,y): H(x,y) = h\}$, and let $T(h)$ denote the period of the Hamiltonian flow around $H^{-1}(h)$.  Define
    \begin{align}  \label{G1}    
      G_1(h) & = -\frac{T'(h)}{T(h)}, \\
      G_2(h)& = \int_{H^{-1}(h)} \bigl((\overline{\nabla}H.(V_1.H))(x,y)\bigr)^2 dm_h(x,y), \label{G2}\\
      G(h) & = |G_1(h)|^{2/3}\bigl(G_2(h)\bigr)^{1/3}. \label{G}
      \end{align}
 In the following result the measure $\mu$ denotes the invariant probability measure for the system \eqref{add eps} in $\R^2$.  It does not depend on $\e > 0$ and has density $\rho(x,y)$ given by \eqref{density} with $a = b = 1$.

\begin{theorem}   \label{thm main} For fixed $\beta > 0$ and $\sigma \neq 0$ the Lyapunov exponent for the system \eqref{add eps} satisfies   \begin{equation} \nonumber 
  \lambda(\e^2 \beta,\e \sigma,1,1) = \e^{2/3} \overline{\lambda} + {\cal O}(\e^{4/3})
  \end{equation}
  as $\e \to 0$, where
    \begin{equation} \nonumber 
    \overline{\lambda} = \gamma_0 \int_{\R^2} G(H(x,y)) d\mu(x,y) \in (0,\infty)
    \end{equation}
and $\gamma_0 = \pi/\bigl(2^{1/3}3^{1/6}[\Gamma(1/3)]^2\bigr) \approx 0.29$.  
\end{theorem}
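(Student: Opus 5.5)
Write the linearized flow in the moving frame $\{U_1,U_2\}$, i.e.\ $v_t = r_t(\cos\theta_t\, U_1 + \sin\theta_t\, U_2)$ up to normalization, and apply the Furstenberg--Khas'minskii formula of Proposition \ref{prop FK cart}. This gives
\[
\lambda = \int Q(x,y,\theta)\, d\nu^\e(x,y,\theta),
\]
where $\nu^\e$ is the invariant measure of the projective process $(x_t,y_t,\theta_t)$ and $Q$ is the radial growth rate. Because of the $\e^2$ scaling it is convenient to speed up time by $\e^{-2}$. Then $(H_t,\phi_t,\theta_t)$, with $\phi$ the angle along the orbit, becomes a system with a fast rotation in $\phi$ at rate $\e^{-2}/T(h)$. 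This is the framework of Baxendale and Goukasian \cite{BG02}.

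\textbf{Dynamics of the tangent direction.} The unperturbed linearization along the Hamiltonian flow is a shear in the frame $\{U_1,U_2\}$. With $U_1$-component $c$ and $U_2$-component $s$, the equations are $\dot s=0$ and $\dot c = -(T'(h)/T(h))\, s$ after averaging over the orbit. Projectively this drives $\theta$ to the fixed direction $\theta=0$ (along the flow) with speed proportional to $G_1(h)\theta^2$, i.e.\ $\dot u = G_1(h)$ in $u=\cot\theta$.

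The noise enters through $V_1.H$. After averaging over $m_h$ it gives a diffusive kick to $\tan\theta$ of size $\e^2 G_2(h)$ per unit time. The relevant balance near $\theta\approx 0$ is therefore a one-dimensional process of the form $dz = (G_1 z^2 + \ldots)\,dt + \e\sqrt{G_2}\,dB$ in $z=\tan\theta$ (or $u$). Rescaling $z = \e^{2/3}|G_1|^{-1/3}G_2^{1/3}w$ reduces it to the canonical equation $dw = w^2\,dt + dB$, the stochastic Riccati/Airy process, run at time scale $\e^{2/3}G$. Its stationary mean of $w$ gives the constant. Imkeller--Lederer type computations give $\gamma_0=\pi/(2^{1/3}3^{1/6}\Gamma(1/3)^2)$ via Airy functions, and this is where $\Gamma(1/3)$ appears. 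Since the growth rate $Q\approx G_1 z$ on that scale, the exponent at fixed $h$ is $\e^{2/3}\gamma_0 G(h)$. The sign is positive because the Riccati drift pushes the direction through $\pm\infty$.

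\textbf{Plan in order.}
\begin{enumerate}
\item Derive the SDE for $(H,\phi,\theta)$ and the expression for $Q$.
\item Choose a test function $F(h,\phi,\theta)=F_0(h,\theta)+\e^{2/3}F_1+\ldots$. The aim is $\mathcal L^\e F = Q - \e^{2/3}\gamma_0 G(h) + \mathcal O(\e^{4/3})$ times controlled functions. The $\phi$-dependence is removed by solving Poisson equations along the orbit, using $U_1.$-derivatives and the period $T(h)$. The $\theta$-part is handled by the rescaled Riccati generator, whose stationary solution gives the Airy-based correction.
\item Integrate against $\nu^\e$. The $\mathcal L^\e F$ term vanishes, and the $h$-marginal of $\nu^\e$ is exactly $\mu$ (independent of $\e$). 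This gives
\[
\lambda(\e^2\beta,\e\sigma,1,1)=\e^{2/3}\gamma_0\int G(H)\,d\mu+\mathcal O(\e^{4/3}).
\]
\end{enumerate}
Positivity and finiteness of $\overline\lambda$ follow by checking that $G_1(h)\neq0$ for $h>0$, since the quartic potential makes $T$ strictly decreasing. One also needs growth bounds on $G$ at $h\to0$ and $h\to\infty$ (power-law) against the Gaussian-quartic density.

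\textbf{Main obstacle.} The hardest step is the uniformity of the error estimates. The rescaled region $\theta\sim\e^{2/3}$ must be glued to the region where $\theta$ is of order one, where the dynamics are essentially deterministic rotation with an $\mathcal O(\e^{2/3})$-scaled contribution. The errors must also be controlled near $h\to 0$, where the frame $U_2$ is singular and $G_1(h)\to0$, and as $h\to\infty$. I would first try to handle this by cutting off $h\in[\delta,\delta^{-1}]$ with $\delta=\e^{\kappa}$. On the complement I would use the crude bound $|Q|\le C(1+x^2)$ together with $\mu$-tail estimates. Inside, the corrector is built with explicit Airy-function asymptotics so that the remainders are integrable.
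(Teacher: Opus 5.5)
The heuristic core of your proposal is right and matches the paper's Section~\ref{sec heur}: averaged shear $G_1$, noise strength $G_2$, the rescaling $z=\varepsilon^{2/3}(G_2/G_1)^{1/3}w$ that produces the Riccati/Airy system run at speed $\varepsilon^{2/3}G(h)$, and positivity from $G_1>0$. The genuine gap is the singularity at the origin, which is where the real work of this paper lies.

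With additive noise, $(0,0)$ is not invariant (unlike \cite{BG02}), and the frame is undefined there. Near the origin $\alpha_1^1=O(r^{-1})$, and $\Gamma(h)\sim h^{-1/3}$ because $G_2(h)\sim h$; note that $G_1(h)\to 3/4$, not $0$, as $h\to0$. As a result, in the paper's normalization the frame growth rate contains terms bounded only by $\varepsilon^{2}r^{-2}$, $\varepsilon^{8/3}r^{-10/3}$ and $\varepsilon^{10/3}r^{-14/3}$, and the corrector generates $\varepsilon^2\overline{\mathcal L}A=\varepsilon^2O(r^{-4})$. So neither your step~1 nor your step~3 is available:
\begin{itemize}
\item $\lambda=\int Q\,d\nu^\varepsilon$ in the moving frame does not follow, because Proposition~\ref{prop FK cart} is only for the Cartesian angle.
\item $\int\mathcal L^\varepsilon F\,d\nu^\varepsilon=0$ cannot be justified with these singular terms.
\end{itemize}
The paper never constructs an invariant measure for the frame process at all.

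Your cutoff $h\ge\varepsilon^\kappa$ is the right kind of idea (the paper takes $h\sim\varepsilon$), but as stated it does not close the argument. The bound $|Q|\le C(1+x^2)$ holds only for the Cartesian integrand $\widetilde Q_\varepsilon$. So at the cutoff you must switch between $\log\|v_t\|$ and the frame norm, and you have not accounted for the switching terms. The paper handles this as follows:
\begin{itemize}
\item It sets $\Lambda_\varepsilon(t)=(1-\chi_\varepsilon(h_t))\log\|v_t\|+\chi_\varepsilon(h_t)\log\|\widetilde w_t\|$ and shows $\frac1n|\log\|v_n\|-\Lambda_\varepsilon(n)|\to0$.
\item It uses only time averages plus the ergodic theorem for $(x_t,y_t)$ under the $\varepsilon$-independent $\mu$.
\item It bounds the cross terms $(\log\|\widetilde w\|-\log\|v\|)\,d\chi_\varepsilon$ and $F_\varepsilon\,d\chi_\varepsilon$ on the shell $\varepsilon\le h\le 2\varepsilon$. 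There these factors are $O(\log 1/\varepsilon)$ and $O(\varepsilon^{-2/3})$ respectively.
\end{itemize}
The estimates close only because each stronger blow-up comes with a higher power of $\varepsilon$, via Lemma~\ref{lem gint}. This bookkeeping is the proof, and it is missing from your plan.

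Second, your expansion $F=F_0(h,\theta)+\varepsilon^{2/3}F_1+\cdots$ in the unscaled angle cannot solve the corrector equation. The leading corrector has size $\varepsilon^{-2/3}$ and depends on $\varepsilon^{-2/3}\Gamma(h)\tan\theta$. That quantity is exactly your Riccati variable $w$, and it is exactly $\tan\phi$ for the angle $\phi$ of $\widetilde w=\mathrm{diag}(\varepsilon^{2/3},1)w$.

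Conjugating by this matrix (Pinsky--Wihstutz) removes your ``main obstacle'':
\begin{itemize}
\item $\phi$ lives on the whole circle, so the region $\theta\sim1$ is just a neighbourhood of $\phi=\pm\pi/2$ and nothing needs gluing.
\item The limiting operator $\overline{\mathcal M}$ is hypoelliptic on the circle, with a bounded smooth solution $R$ of \eqref{step2}.
\item The corrector is $F_\varepsilon=A\Psi_1+B\Psi_2+\varepsilon^{-2/3}R(\phi)$, with $U_1.A=a-G$ and $U_1.B=b-G$.
\end{itemize}
This also replaces your ``stationary mean of $w$'' by the integral of a bounded function against the invariant law on the circle. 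Your version exists only as a principal value, since the Riccati stationary density decays like $w^{-2}$. Finally, the cutoff at large $h$ is unnecessary: all coefficients grow polynomially and $\mu$ has Gaussian-type tails.
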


A related integral formula for $\overline{\lambda}$ in terms of an associated action-angle coordinate system in $\R^2 \setminus \{(0,0)\}$ is stated without proof in \cite{AIN04}.

Exact formulas for $T(h)$, $G_1(h)$ and $G_2(h)$, and hence for $G(h)$, in terms of complete elliptic integrals are given in Section \ref{sec orbitave}. In particular $G_1(h) > 0$ for all $h >0$. 
     The property $T'(h) < 0$ implies that orbits with larger $h$ have smaller periods, so we may interpret $G_1(h)$ as the time averaged shearing effect of the Hamiltonian flow in a small neighborhood of the orbit $H^{-1}(h)$.   More precisely,  consider $(x,y)$ with $H(x,y) = h$, and a nearby point $(\widetilde{x},\widetilde{y})= (x,y) + \delta U_2(x,y)$, so that $H(\widetilde{x},\widetilde{y}) = h+\delta+{\cal O}(\delta^2)$.  Using \eqref{dw0} and \eqref{int J} we see that after time $T(h)$ the point $(x,y)$ has returned to $(x,y)$ but the nearby point $(\widetilde{x},\widetilde{y})$ is now at $(\widetilde{x},\widetilde{y}) - \delta T'(h)U_1(\widetilde{x},\widetilde{y}) + {\cal O}(\delta^2)$.       

Also $V_1.H(x,y)$ corresponds to the change in $H(x,y)$ caused by a ``kick'' $V_1(x,y)$, and then $U_1.(V_1.H))$ is the variation of the effect of the kick along the orbit.  Thus $G_2(h)$ corresponds to the mean square variation along the orbit of the random kicking effect of the noise vector field $V_1$.  
   Theorem \ref{thm main} quantifies how the shearing and the random kicking effects on the Hamiltonian orbits together contribute to the positivity of the Lyapunov exponent.  Related results on the effects of shearing and periodic or random kicking on limit cycles appear in Wang and Young \cite{WY03}, Lin and Young \cite{LY08} and Engel, Lamb and  Rasmussen \cite{ELR}.

\subsection{Structure of the paper}

 The proof follows as closely as possible the proof in \cite{BG02} of the corresponding result with small multiplicative noise $\e \sigma x_t \circ dW_t$ in place of small additive noise $\e \sigma \circ dW_t$.   While the action-angle formulation used in \cite{AIN04} has theoretical elegance, the use of the explicit Hamiltonian frame described in \eqref{change} together with the original $(x,y)$ coordinate system allows for explicit computations near the singular point $(0,0)$ for the Hamiltonian system.   
  Many of the changes from \cite{BG02} are simple computational consequences of using a different noise vector field $V_1$, resulting in slightly different formulas.  
  Other changes are of more theoretical significance, and these issues are discussed in Section \ref{sec sing}. 

The existence and uniqueness of the almost-sure limit in \eqref{lam ab} is given in Section \ref{sec FK}.  The integral formula \eqref{FK} for the Lyapunov exponent is obtained using the standard Cartesian coordinate expression for the derivative process $\{v_t: t \ge 0\}$.  But to see the $\e^{2/3}$ asymptotic it is necessary to consider the components of $\{v_t: t \ge 0\}$ relative to a moving frame determined by the Hamiltonian function $H(x,y)$.  This is carried out in Sections \ref{sec Ham frame} and \ref{sec AIN}, following the method used in \cite{BG02}.  
 Section \ref{sec Ham frame} deals with the general system \eqref{sde} and includes in Section \ref{sec heur} a heuristic argument for the formulas in Theorem \ref{thm main}. Section \ref{sec AIN} contains the calculations specific to the system \eqref{add eps}.    

The use of the Hamiltonian frame introduces a singularity for the derivative process when $(x_t,y_t) = (0,0)$.  The consequences of this singularity are discussed in Section \ref{sec sing}, and a truncation method is introduced.  A truncated version of the Furstenberg-Khas'minskii formula is given as Theorem \ref{thm lamQ}, and a truncated version of the adjoint method is given as Theorem \ref{thm QG}.  The proof of Theorem \ref{thm main} is completed in Section \ref{sec proof}; and finally Appendix \ref{sec appendix} contains results on the parabolic H\"{o}rmander condition and controllability for the derivative process for a general system of the form \eqref{nonlin}.

\section{Furstenberg-Khasminskii formula} \label{sec FK}

Write $v_t = \|v_t\|\begin{bmatrix} \cos \theta_t \\ \sin \theta_t \end{bmatrix}$.  For the system \eqref{add eps} we have
  \begin{equation} \label{log v}
   d \log \|v_t\| = (-3x_t^2 \sin \theta_t \cos\theta_t - \e^2 \beta \sin^2 \theta_t)dt
  \end{equation}
and 
 \begin{equation} \label{theta}
   d \theta_t = \widetilde{Q}(x_t,y_t,\theta_t)dt
  \end{equation}  
where
  \begin{equation} \label{Q tilde}
   \widetilde{Q}_\e(x,y,\theta) =  -3x^2 \sin \theta \cos\theta - \e^2 \beta \sin^2 \theta.
   \end{equation}

\begin{proposition} \label{prop FK cart}  For the system \eqref{add eps} and for all $(x_0,y_0,v_0)$ with $v_0 \neq 0$ the limit
      $$
       \lambda(\e^2 \beta,\e \sigma,1,1) = \lim_{t \to \infty} \frac{1}{t} \log \|v_t\|
      $$
exists almost surely and has the value
   \begin{equation} \label{FK}
     \lambda(\e^2 \beta,\e \sigma,1,1) = \int \widetilde{Q}_\e(x,y,\theta)d\nu_\e(x,y,\theta)
     \end{equation}
where $\nu_\e$ is the unique invariant probability measure for the process $\{(x_t,y_t,\theta_t): t \ge 0\}$ on $\R^2 \times \R/(2\pi \Z)$ given by (\ref{add eps} ,\ref{theta}).
\end{proposition}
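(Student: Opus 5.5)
Since $\widetilde{Q}_\e$ and $\log\|v_t\|$ are invariant under $\theta \mapsto \theta+\pi$, the plan is to work on the projective bundle $\R^2 \times \R/(\pi\Z)$ and then lift to $\R/(2\pi\Z)$. By \eqref{log v} and \eqref{theta}, and with the initial value fixed,
$$\frac1t\log\|v_t\| = \frac1t\log\|v_0\| + \frac1t\int_0^t \widetilde{Q}_\e(x_s,y_s,\theta_s)\,ds.$$
So the proposition reduces to an ergodic theorem for the process $(x_t,y_t,\theta_t)$, applied to the function $\widetilde{Q}_\e$. That function grows only quadratically in $x$, so it is integrable against any invariant probability whose $(x,y)$-marginal is $\mu$, because $\rho$ has Gaussian-type tails.

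\emph{Existence of an invariant measure.} The $(x,y)$-marginal of any invariant probability must be $\mu$, since $\mu$ is the unique invariant measure of \eqref{add eps}. The $\theta$-fibre is compact. Taking Krylov--Bogoliubov averages of the laws of $(x_t,y_t,\theta_t)$ with $(x_0,y_0)\sim\mu$ therefore gives a tight family, whose limit points are invariant. The Feller property follows from smooth dependence of the SDE on its initial data.

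\emph{Uniqueness.} This is the main step. The generator is $L = A_0 + \tfrac12 A_1^2$, where $A_1 = \e\sigma\,\partial_y$ and
$$A_0 = y\partial_x + (-x-x^3-\e^2\beta y)\partial_y + \widetilde{Q}_\e\,\partial_\theta .$$
I would verify the parabolic H\"ormander condition; this is where the Appendix is used. The bracket $[A_1,A_0]$ has a $\partial_x$ component equal to $\e\sigma$. Further brackets $[A_1,[A_1,A_0]]$ and $[[A_1,A_0],A_0]$ produce $\theta$-components through the derivatives of $\widetilde{Q}_\e$ in $x$, namely $-6x\sin\theta\cos\theta$ and then $-6\sin\theta\cos\theta$. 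These vanish only where $\sin 2\theta=0$. At those $\theta$, $\widetilde{Q}_\e$ itself (the $\theta$-component of $A_0$) must be combined with the other brackets. The expected obstacle is precisely these degenerate angles, $\theta\in\{0,\pi/2\}$ mod $\pi$. At $\theta=\pi/2$ we have $\widetilde{Q}_\e=-\e^2\beta\ne0$. At $\theta=0$ we have $\widetilde{Q}_\e=0$, but $\partial_\theta\widetilde{Q}_\e=-3x^2$, and higher brackets with $A_0$ pick this up. I would compute these brackets explicitly, perhaps with several iterations, to confirm full rank everywhere. This gives hypoellipticity, so every invariant measure has a smooth density.

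\emph{Controllability and conclusion.} Next I would show, via the Stroock--Varadhan support theorem and the Appendix, that the control system with $W$ replaced by a control $u(t)$ can steer any $(x,y,\theta)$ to a neighbourhood of any target. First I would use the control to park $(x,y)$ near a point $(x_*,0)$ with $x_*\neq0$. Near that point the angular dynamics $\dot\theta=-3x_*^2\sin\theta\cos\theta-\e^2\beta\sin^2\theta$ rotates $\theta$ monotonically through each interval where it is nonzero. Moving $(x,y)$ along suitable paths, including passing near $x=0$ where $\dot\theta=-\e^2\beta\sin^2\theta<0$, lets $\theta$ traverse all of $\R/(\pi\Z)$. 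Accessibility plus hypoellipticity then gives that any two ergodic invariant measures have overlapping open supports with positive densities, so they coincide. This yields uniqueness of $\nu_\e$ on the projective bundle. For the lift to $\R/(2\pi\Z)$, uniqueness follows the same way once controllability across $\theta\mapsto\theta+\pi$ is checked. Alternatively, one can simply note that $\widetilde{Q}_\e$ is $\pi$-periodic, so the integral in \eqref{FK} is unaffected.

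\emph{From stationary to arbitrary initial data.} Birkhoff's theorem gives the limit \eqref{FK} for $\nu_\e$-almost every starting point. To extend this to every $(x_0,y_0,\theta_0)$, I would use the fact that the positive transition densities (from hypoellipticity and controllability) make the process Harris/positive recurrent on the full state space. The law of large numbers for additive functionals of a positive Harris recurrent process then holds from every initial point, for $\widetilde{Q}_\e\in L^1(\nu_\e)$.
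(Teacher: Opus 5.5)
There is a genuine gap. Your architecture matches the paper's: Krylov--Bogoliubov existence, a H\"ormander condition for a smooth density, controllability plus the support theorem for full support and uniqueness, Birkhoff for $\nu_\e$-almost every start, then regularity of the transition law to reach every start. The problem is that the two substantive steps are carried out for the wrong angular vector field, and for that field they are false.

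You took the $\theta$-drift to be $\widetilde Q_\e$, following the display \eqref{theta}, but that display is a misprint. Write $u=(\cos\theta,\sin\theta)$, $u^\perp=(-\sin\theta,\cos\theta)$ and $M=\begin{bmatrix}0&1\\-1-3x^2&-\e^2\beta\end{bmatrix}$. Then $\widetilde Q_\e=\langle Mu,u\rangle$ is the radial rate in \eqref{log v}. The angle of $v_t$ instead satisfies
\[
\dot\theta_t=\langle Mu,u^\perp\rangle=-\sin^2\theta_t-(1+3x_t^2)\cos^2\theta_t-\e^2\beta\sin\theta_t\cos\theta_t ,
\]
which is the formula used in Section \ref{sec appendix}. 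With your drift, $\widetilde Q_\e$ vanishes identically on $\{\sin\theta=0\}$. Consequently:
\begin{itemize}
\item $A_0$ and $A_1$ are both tangent to $\R^2\times\{0,\pi\}$, hence so is every Lie bracket;
\item no control moves $\theta$ off $0$;
\item $\mu\otimes\delta_0$ is a second invariant probability measure.
\end{itemize}
So the degeneracy at $\theta=0$ that you flagged cannot be removed by further brackets. Your controllability sketch, which relies on $-\e^2\beta\sin^2\theta<0$ near $x=0$, fails at exactly that angle. In any case, your identity $\frac1t\log\|v_t\|=\frac1t\log\|v_0\|+\frac1t\int_0^t\widetilde Q_\e\,ds$ requires $\theta_s$ to be the true angle of $v_s$, so the ergodic theory must be done for the correct process.

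With the correct drift the plan works essentially as in the paper, but the details differ from what you anticipated.

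\emph{Controllability.} There is no degenerate angle. At any $\hat x$ with $1+3\hat x^2>\e^4\beta^2/4$ the drift is strictly negative, so $\theta$ rotates monotonically, at a rate that grows without bound with $|\hat x|$. Parking $x$ at a suitable such $\hat x$ carries $\theta_0$ exactly to $\theta_1$ in time $T$ (Lemma \ref{lem cont}). This works directly on $\R/(2\pi\Z)$, which you need: uniqueness is asserted there, and $\pi$-periodicity of $\widetilde Q_\e$ does not supply it.

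\emph{Bracket condition.} The problematic angle is $\theta=\pi/2$, where every $x$-derivative of the drift vanishes, since all are proportional to $\cos^2\theta$. There you may not use the $\theta$-component of $A_0$ itself. The parabolic condition is what gives the absolutely continuous transition probabilities your final Harris-recurrence step relies on, and it only allows the ideal generated by $A_1$. The paper gets the missing direction by lifting to $(x,y,v)$ and using ${\cal U}^{(4)}\equiv 6\neq0$ to produce $D$, $[C,D]$ and $[C,[C,D]]$, whose span contains the rotation generator (Lemma \ref{lem Hor}).
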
  
 
 \proof  We follow the scheme of \cite[Section 8]{BSri}.  The existence of the invariant probability $\mu$ for $\{(x_t,y_t): t \ge 0\}$ together with the compactness of $\R/(2 \pi \Z)$ implies the existence of at least one invariant probability measure $\nu_\e$ for the process $\{(x_t,y_t,\theta_t): t \ge 0\}$ on $\R^2 \times \R/(2\pi \Z)$. 
    The parabolic H\"{o}rmander condition proved in Lemma \ref{lem Hor} implies $\nu_\e$ has a smooth density (see \cite[Theorem 4]{IK}), and the controllability proved in Lemma \ref{lem cont} together with the support theorem of Stroock and Varadhan \cite{SVsupp} imply $\nu_\e(U) > 0$ for all open $U \subset \R^2 \times \R/(2 \pi \Z)$.  It follows that $\nu_\e$ is unique and $\mbox{supp}(\nu_\e) = \R^2 \times \R/(2\pi \Z)$.  Moreover, the time $t$ transition probability $P_t((x,y,\theta), \cdot)$ is absolutely continuous with respect to $\nu_\e$ for all $t > 0$ and $(x,y,\theta) \in \R^2 \times \R/(2 \pi \Z)$, see \cite[Prop 5.1]{IK}.
 
 Integrating \eqref{log v} with respect to $t$ and using the ergodic theorem we get  
  $$
  \frac{1}{t} \log \|v_t\| = \frac{1}{t} \log\|v_0\|+ \frac{1}{t}\int_0^t \widetilde{Q}_\e(x_s,y_s,\theta_s)ds \to \int \widetilde{Q}_\e(x,y,\theta)d\nu_\e(x,y,\theta)
  $$
almost surely as $t \to \infty$ for $\nu_\e$ almost all $(x_0,y_0,\theta_0)$.  Since the time 1 transition probability $P_1((x_0,y_0,\theta_0),\cdot)$ is absolutely continuous with respect to $\nu_\e$ we get  
$$
  \frac{1}{t} \log \|v_t\| = \frac{1}{t} \log\|v_0\|+ \frac{1}{t}\int_0^t \widetilde{Q}_\e(x_s,y_s,\theta_s)ds \to \int \widetilde{Q}_\e(x,y,\theta)d\nu_\e(x,y,\theta)
  $$
almost surely as $t \to \infty$ for all $(x_0,y_0,\theta_0)$, equivalently for all $(x_0,y_0,v_0)$ with $v \neq 0$, giving the Furstenberg-Khasminskii formula \eqref{FK}.
\endproof

 \section{Hamiltonian frame for the derivative process} \label{sec Ham frame}

This section is based very closely on \cite[Section 2]{BG02} and deals with the general system \eqref{sde}.  Recall the orthogonal vector fields 
$U_1$ and $U_2$ on $\R^2 \setminus \{(0,0)\}$ given in \eqref{U}.     
For $(x_t,y_t) \neq (0,0)$ let $w_t = \begin{bmatrix} w_{1,t} \\ w_{2,t} \end{bmatrix}$ denote the coordinates of the derivative process $v_t$ with respect to the moving frame given by
$\Gamma(H(x,y))U_1(x,y)$ and $U_2(x,y)$. Here $\Gamma$ is a smooth
positive function we shall choose later, see \eqref{Gamma}.  (Note that in this paper $\Gamma(h)$ is not the usual Gamma function, with the only exception appearing in the formula for $\gamma_0$ given in Theorem \ref{thm main}.)   More precisely, we use the formula 
 \begin{equation}
   v_t = w_{1,t}\Gamma(H(x_t,y_t))U_1(x_t,y_t)+ w_{2,t}U_2(x_t,y_t).
   \label{change}
 \end{equation}
to convert $v_t$ to the new vector $w_t$.  

For the Hamiltonian flow obtained by taking $\e = 0$ in \eqref{sde} we have 
    \begin{equation} \label{dw0} 
    dw_t  = \begin{bmatrix} 0 & \displaystyle{\frac{J(x_t,y_t)}{\Gamma(H(x_t,y_t))}} \\[1ex]
   0 & 0 \end{bmatrix}w_t\,dt
   \end{equation}
where (using subscripts $x$ and $y$ to denote partial derivatives) 
  \begin{equation}  \nonumber  
    J(x,y) = \frac{[H_y(x,y)^2 -
H_x(x,y)^2][H_{yy}(x,y) - H_{xx}(x,y)] +
4H_x(x,y)H_y(x,y)H_{xy}(x,y)}{[H_x(x,y)^2+H_y(x,y)^2]^2}.
\end{equation}
See \cite[Lemma 1]{BG02}.  
This nilpotent structure is key to everything which follows.  In \cite{AIN04} the nilpotent structure is seen by using action-angle coordinates.  In this paper, as in \cite{BG02}, we use the original $(x,y)$ coordinate system together with the explicit formulas for the moving frame given by
$\Gamma(H(x,y))U_1(x,y)$ and $U_2(x,y)$.

For the stochastically perturbed system \eqref{sde} with $\e >0$ define functions $\alpha_i^j(x,y)$ for $(x,y) \neq (0,0)$ by 
     $$
    V_i(x,y) = \alpha_i^1(x,y) U_1(x,y) + \alpha_i^2(x,y)U_2(x,y)
    $$
for $i =0,1$.  

\begin{lemma} \label{lem dw} \cite[Lemma 3]{BG02}
For $(x_t,y_t) \neq (0,0)$ and $\e > 0$   
 \begin{equation} \label{dw}
 dw_t  = \begin{bmatrix} 0 & \displaystyle{\frac{J(x_t,y_t)}{\Gamma(H(x_t,y_t))}} \\[1ex]
   0 & 0 \end{bmatrix}w_t\,dt + \varepsilon^2 M_0(x_t,y_t)w_t\,dt +
  \varepsilon M_1(x_t,y_t)w_t\circ  dW_t,
 \end{equation}
 where
 \begin{align} \nonumber
  \lefteqn{M_i(x,y)}\\
  & = \begin{bmatrix}
       (U_1.\alpha_i^1)(x,y)-[J(x,y)+(\log \Gamma)'(H(x,y))]\alpha_i^2(x,y)
        &  \displaystyle{\frac{(U_2.\alpha_i^1)(x,y)+
          J(x,y)\alpha_i^1(x,y)}{\Gamma(H(x,y))}}
          \\[3ex]
         \Gamma(H(x,y))(U_1.\alpha_i^2)(x,y) & (U_2.\alpha_i^2)(x,y)
         \end{bmatrix} \label{Mi}
 \end{align}
for $i = 0,1$.  \end{lemma}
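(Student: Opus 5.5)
Since $v_t$ solves the linear Stratonovich equation $dv_t = DX_0(x_t,y_t)v_t\,dt + \e\,DV_1(x_t,y_t)v_t\circ dW_t$ with $X_0 = \overline{\nabla}H + \e^2 V_0$, and Stratonovich calculus obeys the ordinary chain rule, I would differentiate the change of frame \eqref{change} by the product rule. Set $E_1 = \Gamma(H)U_1$ and $E_2 = U_2$, so that $v_t = w_{1,t}E_1(x_t,y_t) + w_{2,t}E_2(x_t,y_t)$. Then
\begin{equation} \nonumber
dv_t = \sum_j dw_{j,t}\,E_j + \sum_j w_{j,t}\bigl[(X_0.E_j)\,dt + \e (V_1.E_j)\circ dW_t\bigr],
\end{equation}
where $V.E$ denotes the directional derivative of the vector field $E$ along $V$. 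The defining property of $v_t$ gives $dv_t = \sum_j w_{j,t}\bigl[(E_j.X_0)\,dt + \e(E_j.V_1)\circ dW_t\bigr]$. Subtracting, $\sum_j dw_{j,t}E_j = \sum_j w_{j,t}\bigl([X_0,E_j]\,dt + \e[V_1,E_j]\circ dW_t\bigr)$, with the sign convention $[A,B] = B.A - A.B$ as it appears above. So the whole problem reduces to expanding the brackets $[\overline{\nabla}H,E_j]$, $[V_0,E_j]$ and $[V_1,E_j]$ in the frame $\{E_1,E_2\}$.

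For the $\e$-independent part I would use \eqref{dw0}, that is \cite[Lemma 1]{BG02}, taken as given. It encodes $[U_1,\Gamma(H)U_1]=0$ (because $U_1.H=0$) and $[U_1,U_2] = J\,U_1$ in the paper's orientation, which produces the entry $J/\Gamma$. For $V_i = \alpha_i^1U_1 + \alpha_i^2U_2$ I would expand with the Leibniz rule for brackets, $[fA,B] = f[A,B] - (B.f)A$. Two further ingredients are needed. First, $U_1.(\Gamma(H)) = 0$ and $U_2.(\Gamma(H)) = \Gamma'(H)$, since $U_2.H=1$. Second, $[U_1,U_2]$ is again given by the relation $U_1.U_2 - U_2.U_1 = \pm J U_1$ from Lemma 1, with the sign fixed by the convention there.

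Collecting coefficients gives the four entries of $M_i$:
\begin{itemize}
\item The $(1,1)$ entry collects $U_1.\alpha_i^1$, the bracket term $-J\alpha_i^2$, and $-(\log\Gamma)'\alpha_i^2$, which comes from differentiating $\Gamma(H)$ along $\alpha_i^2U_2$.
\item The $(1,2)$ entry collects $U_2.\alpha_i^1 + J\alpha_i^1$, divided by $\Gamma$ because the coefficient is taken with respect to $E_1 = \Gamma U_1$.
\item The $(2,1)$ entry is $\Gamma\,U_1.\alpha_i^2$.
\item The $(2,2)$ entry is $U_2.\alpha_i^2$.
\end{itemize}
Because the equations are linear in $V_i$, the $\e^2$ and $\e$ pieces separate exactly, and no Itô correction appears since everything is written in Stratonovich form. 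The derivation is valid wherever the frame is smooth, i.e. for $(x_t,y_t)\neq(0,0)$.

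The main obstacle is bookkeeping: keeping the sign convention of the bracket consistent with the one used in \eqref{dw0}, and tracking the factors of $\Gamma$ when passing between $U_1$ and $E_1$. I would check the signs against the $\e=0$ case, which must reproduce \eqref{dw0}.
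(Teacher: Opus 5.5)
The paper gives no proof of its own here; it simply cites \cite[Lemma 3]{BG02}. Your route is the natural direct proof: apply the Stratonovich product rule to \eqref{change}, reduce to $\sum_j dw_{j,t}E_j=\sum_j w_{j,t}\bigl([X_0,E_j]\,dt+\e[V_1,E_j]\circ dW_t\bigr)$, use bilinearity to separate the $\e^2V_0$ and $\e V_1$ parts, and expand $V_i=\alpha_i^1U_1+\alpha_i^2U_2$ in the frame. The reduction step is correct, and so is $[U_1,U_2]=JU_1$ in your convention $[A,B]=B.A-A.B$.

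There is, however, a concrete sign error in the tool you then apply. The Leibniz rule $[fA,B]=f[A,B]-(B.f)A$ is the rule for the opposite convention $[A,B]=A.B-B.A$. In your convention $B.(fA)=(B.f)A+f\,(B.A)$, so the correct rules are
\[
[fA,B]=f[A,B]+(B.f)A, \qquad [A,gB]=g[A,B]-(A.g)B .
\]
Used literally, your rule reverses the sign of every term containing a derivative of some $\alpha_i^j$. It gives $-U_1.\alpha_i^1$ in the $(1,1)$ slot, $(J\alpha_i^1-U_2.\alpha_i^1)/\Gamma$ in the $(1,2)$ slot, $-\Gamma\,U_1.\alpha_i^2$ in the $(2,1)$ slot and $-U_2.\alpha_i^2$ in the $(2,2)$ slot. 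Only the $J$-terms come out right, because they come from the correctly signed $[U_1,U_2]$. Your proposed safeguard cannot catch this. At $\e=0$ the only function coefficient is $\Gamma(H)$, and $U_1.\Gamma(H)=0$, so the Leibniz rule is never exercised. As written, your bullet list matches the target formula but not what your stated rules produce.

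With the corrected rules the computation does close:
\[
[V_i,U_2]=\bigl(U_2.\alpha_i^1+J\alpha_i^1\bigr)U_1+(U_2.\alpha_i^2)\,U_2,
\]
\[
[V_i,\Gamma(H)U_1]=\Gamma(H)\bigl(U_1.\alpha_i^1-J\alpha_i^2-(\log\Gamma)'(H)\,\alpha_i^2\bigr)U_1+\Gamma(H)\,(U_1.\alpha_i^2)\,U_2 .
\]
Rewriting these in the frame $E_1=\Gamma(H)U_1$, $E_2=U_2$ gives exactly the second and first columns of $M_i$ in \eqref{Mi}.

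You can also fix the sign of $[U_1,U_2]$ directly rather than reading it off \eqref{dw0}. Since $U_1.H=0$ and $U_2.H=1$, the field $[U_1,U_2]$ annihilates $H$, so it is a multiple of $U_1$. Its inner product with $U_1$, divided by $\|U_1\|^2$, works out to precisely the quotient that defines $J$.
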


\s

Since the sde \eqref{dw} is a small perturbation of a nilpotent linear system we follow the method of Pinsky and Wihstutz \cite{PW88} and define $\widetilde{w}_t = T w_t$ where $T = \begin{bmatrix} \varepsilon^{2/3} & 0 \\ 0 & 1 \end{bmatrix}$.   

\begin{lemma} \label{lem dweps}
For $(x_t,y_t) \neq (0,0)$ and $\e > 0$ 
  \begin{equation} \label{dweps}
 d\widetilde{w}_t  = \begin{bmatrix} 0 & \displaystyle{\frac{\e^{2/3}J(x_t,y_t)}{\Gamma(H(x_t,y_t))}} \\[1ex]
   0 & 0 \end{bmatrix}\widetilde{w}_t\,dt + \varepsilon^2 M_0^\e(x_t,y_t)\widetilde{w}_t\,dt +
  \varepsilon M_1^\e(x_t,y_t)\widetilde{w}_t\circ  dW_t,
 \end{equation}
 where 
\begin{align}
\lefteqn{M_i^\varepsilon(x,y) = TM_i(x,y)T^{-1}}\nonumber \\
   & = \begin{bmatrix}
       (U_1.\alpha_i^1)(x,y)-[J(x,y)+(\log \Gamma)'(H(x,y))]\alpha_i^2(x,y)
        &  \displaystyle{\frac{\e^{2/3}[(U_2.\alpha_i^1)(x,y)+
          J(x,y)\alpha_i^1(x,y)]}{\Gamma(H(x,y))}}
          \\[3ex]
         \e^{-2/3}\Gamma(H(x,y))(U_1.\alpha_i^2)(x,y) & (U_2.\alpha_i^2)(x,y)
         \end{bmatrix}.  \label{Mi eps}
\end{align}
for $i = 0, 1$.
\end{lemma}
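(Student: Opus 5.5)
The statement is a direct change of variables applied to Lemma \ref{lem dw}, so I will simply conjugate the equation \eqref{dw} by the constant matrix $T$. Since $T = \mathrm{diag}(\e^{2/3},1)$ is deterministic and independent of $t$, the process $\widetilde{w}_t = Tw_t$ satisfies $d\widetilde{w}_t = T\,dw_t$ in the Stratonovich (equivalently Itô) sense. No correction terms arise.

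First, I substitute \eqref{dw} and write $w_t = T^{-1}\widetilde{w}_t$. This gives
\[
d\widetilde{w}_t = TAT^{-1}\widetilde{w}_t\,dt + \e^2\, TM_0T^{-1}\widetilde{w}_t\,dt + \e\, TM_1T^{-1}\widetilde{w}_t\circ dW_t ,
\]
where $A$ is the nilpotent matrix with single nonzero entry $J/\Gamma(H)$ in position $(1,2)$.

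Second, I compute the conjugation entrywise. For any $2\times 2$ matrix $N=(n_{jk})$ one has
\[
(TNT^{-1})_{jk} = t_j\, n_{jk}\, t_k^{-1}, \qquad t_1=\e^{2/3},\ t_2=1 .
\]
So the diagonal entries are unchanged, the $(1,2)$ entry is multiplied by $\e^{2/3}$, and the $(2,1)$ entry is multiplied by $\e^{-2/3}$. Applying this to $A$ yields the $(1,2)$ entry $\e^{2/3}J/\Gamma(H)$, with all other entries zero. Applying it to $M_i$ from \eqref{Mi} yields exactly \eqref{Mi eps}.

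There is no real obstacle here. The only point to check is that $T$ is constant, so the Stratonovich chain rule introduces no extra terms. Everything is valid on the same set $(x_t,y_t)\neq(0,0)$ as in Lemma \ref{lem dw}.
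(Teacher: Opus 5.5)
Your proposal is correct and matches the paper's approach: the paper says the lemma "follows immediately" from Lemma \ref{lem dw}, which is precisely your conjugation by the constant matrix $T=\mathrm{diag}(\e^{2/3},1)$. Your entrywise rule $(TNT^{-1})_{jk}=t_j n_{jk} t_k^{-1}$ correctly produces the factor $\e^{2/3}$ on the $(1,2)$ entries and $\e^{-2/3}$ on the $(2,1)$ entries in \eqref{dweps} and \eqref{Mi eps}.
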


For the proof of Lemma \ref{lem dw} see \cite[Lemma 3]{BG02}, and then Lemma \ref{lem dweps} follows immediately.

 \begin{remark} \label{rem frame} The frame is not defined at $(x,y) = (0,0)$, so that we have introduced a singularity into the system.   Since $v_t$ is well-defined for all $t\ge 0$, then $w_t $ and $\widetilde{w}_t$ are well-defined whenever $(x_t,y_t) \neq (0,0)$.  The truncation method introduced in Section \ref{sec trunc} ensures that it is enough to consider processes of the form $\{F(x_t,y_t,\widetilde{w}_t): t \ge 0\}$ where $F(x,y,\widetilde{w}) = 0$ whenever $x^2+y^2 < \delta$ for some $\delta > 0$.  For these purposes It\^{o}'s formula can be applied to prove Lemmas \ref{lem dw} and \ref{lem dweps} giving an expression for $d\widetilde{w}_t$ valid whenever $(x_t,y_t) \neq (0,0)$.  
  And similarly for the calculations in Lemma \ref{lem phi logw} and the expressions \eqref{dLam} and \eqref{LFeps} and \eqref{dFeps}.

\end{remark}

\subsection{Heuristic argument for Theorem \ref{thm main}} \label{sec heur}

At this point we give a heuristic argument to show the significance of the nilpotent structure in \eqref{dw0}.  We can rewrite \eqref{dweps} as 
     \begin{align}
 d\widetilde{w}_t &  = \e^{2/3}\begin{bmatrix} 0 & \displaystyle{\frac{J(x_t,y_t)}{\Gamma(H(x_t,y_t))}} \\[1ex]
   0 & 0 \end{bmatrix}\widetilde{w}_t\,dt +  \e^{1/3} \begin{bmatrix} 0 & 0 \\[1ex]
   \Gamma(H(x_t,y_t))(U_1.\alpha_1^2)(x_t,y_t) & 0 \end{bmatrix} \widetilde{w}_t \circ dW_t  \nonumber \\
   & \quad + (\mbox{terms of order }\e^{4/3} \mbox{ and higher})\widetilde{w}_tdt \nonumber \\
   & \quad +(\mbox{terms of order }\e \mbox{ and higher})\widetilde{w}_t \circ dW_t. \label{dweps hot}
   \end{align}
Recall \eqref{sde}
  $$
    d(x_t,y_t) = \overline{\nabla} H(x_t,y_t)dt + \e^2 V_0(x_t,y_t)dt+ \e V_1(x_t,y_t) \circ dW_t.
   $$
As  $\e \to 0$ there is a separation of time scales in the equations \eqref{sde} and \eqref{dweps hot}. The $(x_t,y_t)$ process moves around the orbits of the
Hamiltonian system at rate 1; the vector $\widetilde{w}_t$ moves at the slower rate $\e^{2/3}$; and the
process $h_t = H(x_t,y_t)$ moves at the even slower rate $\e^2$.  The existence of two different time scales
for motion around orbits and motion across orbits is the basis of many results on
stochastic averaging for small perturbations of Hamiltonian systems, see Khas’minskii \cite{Khas64} and Freidlin and Wentzell \cite{FW94}.  
Here we have also the process $\widetilde{w}_t$
running at a third intermediate rate.
We will average around each of the orbits of the unperturbed
Hamiltonian system using the measure $m_h$, then average over $\widetilde{w}_t$, and finally average over $h_t$.  

For this heuristic argument we ignore the higher order terms in \eqref{dweps hot}.  Averaging around the Hamiltonian orbits involves taking the $m_h$ integral of the coefficients in the $\e^{2/3} \cdots  dt $ term and the root mean square $m_h$ integral of the coefficients in the $\e^{1/3} \cdots dW_t$ term.  The leading terms in \eqref{dweps hot} become
        \begin{equation}
 d\widetilde{w}_t   = \e^{2/3}\begin{bmatrix} 0 & \displaystyle{\frac{G_1(h_t)}{\Gamma(h_t)}} \\[1ex]
   0 & 0 \end{bmatrix}\widetilde{w}_t\,dt +  \e^{1/3} \begin{bmatrix} 0 & 0 \\[1ex]
   \Gamma(h_t)\sqrt{G_2(h_t)} & 0 \end{bmatrix} \widetilde{w}_t \circ dW_t  \label{dweps hot2}
   \end{equation}
where the functions $G_1(h)$ and $G_2(h)$ are given in (\ref{G1},\ref{G2}).  Here we use 
  \begin{equation} \label{int J}
  \int_{H^{-1}(h)} J(x,y)dm_h(x,y) = -\frac{T'(h)}{T(h)} = G_1(h),
  \end{equation} see \cite[Lemma 5]{BG02}, and $V_1.H(x,y) = \alpha_1^2(x,y)$.  
We now choose 
   \begin{equation} \label{Gamma} 
   \Gamma(h) = \left(\frac{G_1(h)}{G_2(h)}\right)^{1/3}
   \end{equation}
so that    
   \begin{equation} \label{G bis} 
    \frac{G_1(h)}{\Gamma(h)} = \Bigl( \Gamma(h) \sqrt{G_2(h)} \Bigr)^2 =  \big|G_1(h)\big|^{2/3} \bigl(G_2(h)\bigr)^{1/3} \equiv G(h).
   \end{equation}
Equation \eqref{dweps hot2} can now be written 
 \begin{equation} \label{dweps hot3}
 d\widetilde{w}_t   = \e^{2/3}G(h_t)\begin{bmatrix} 0 & 1 \\[1ex]
   0 & 0 \end{bmatrix}\widetilde{w}_t\,dt +  \e^{1/3} \sqrt{G(h_t)} \begin{bmatrix} 0 & 0 \\[1ex]
   1 & 0 \end{bmatrix} \widetilde{w}_t \circ dW_t 
   \end{equation}
and this is the time change by a factor $\e^{2/3} G(h_t)$ of the constant coefficient stochastic differential equation 
   \begin{equation} \label{X}
   dX_t   = \begin{bmatrix} 0 & 1 \\[1ex]
   0 & 0 \end{bmatrix}X_t\,dt +  \begin{bmatrix} 0 & 0 \\[1ex]
   1 & 0 \end{bmatrix} X_t  \circ dW_t
   \end{equation}          
The system \eqref{X} has (top) Lyapunov exponent $\gamma_0 \approx 0.29$.  The exact value of $\gamma_0$ given in Theorem \ref{thm main} is due to  Ariaratnam and Xie \cite{AX90}. 
Temporarily freezing the slowest variable $h_t = h$ gives a Lyapunov exponent $\e^{2/3} \gamma_0 G(h)$ for the equation \eqref{dweps hot3}.  Finally, averaging over the slowest variable $h_t = H(x_t,y_t)$ gives 
   $$
   \lambda(\e) \sim \e^{2/3} \gamma_0 \int_{\R^2} G(H(x,y))d\mu(x,y).
   $$

This is not a proof.  A proof based on these ideas would involve two steps.  The first would be to obtain a version of the Furstenberg-Khas'minskii formula \eqref{FK} using $\log \|\widetilde{w}_t\|$ in place of $ \log \|v_t\|$, and the second would be to obtain an adjoint expansion for the Furstenberg-Khas'minskii integrand.  Both steps are non-trivial because of the singularity of the Hamiltonian frame at $(0,0)$. 

We continue with calculations for the process away from $(0,0)$ in Section \ref{sec FK Ham} for the general system \eqref{sde} and Section \ref{sec AIN} for the particular system \eqref{add eps}.  In Section \ref{sec sing} we return to the issue of the singularity at $(0,0)$.   We give a truncated version of the Furstenberg-Khas'minskii formula in Theorem \ref{thm lamQ} and a truncated version of the adjoint expansion in Theorem \ref{thm QG}, before completing the proof in Section \ref{sec proof}.

\subsection{Polar coordinates in the Hamiltonian frame} \label{sec FK Ham} 

In Section \ref{sec FK} the vector equation for $v_t$ in the original Cartesian coordinate system gave rise to the equations (\ref{log v},\ref{theta}) for $\log\|v_t\|$ and $\theta_t$. 
Here we carry out a similar calculation for the derived process $\widetilde{w}_t$ given in \eqref{dweps}.   For $\widetilde{w}_t \neq 0$ write $\widetilde{w}_t = \|\widetilde{w}_t\|\left[\begin{array}{c}
           \cos \phi_t \\ \sin \phi_t
        \end{array}\right]$.
\begin{lemma} \label{lem phi logw} {\rm \cite[Lemma 4]{BG02}} For $(x_t,y_t) \neq (0,0)$ and $\e > 0$,
\begin{align}
  d(\log \|\widetilde{w}_t\|) & = 
    \left[\varepsilon^{2/3}Q_0(x_t,y_t,\phi_t)
         + \varepsilon^{4/3}Q_1(x_t,y_t,\phi_t)
         +\varepsilon^{2}Q_2(x_t,y_t,\phi_t)\right.  \nonumber \\[1ex]
    &  \hspace{10ex}   +\left. \varepsilon^{8/3}Q_3(x_t,y_t,\phi_t)
         +\varepsilon^{10/3}Q_4(x_t,y_t,\phi_t) \right] dt \nonumber \\
    &  \quad + \left[\varepsilon^{1/3}  Q_5(x_t,y_t,\phi_t)
            + \varepsilon Q_6(x_t,y_t,\phi_t)
             + \varepsilon^{5/3}  Q_7(x_t,y_t,\phi_t)\right] dW_t \label{log w}
 \end{align}
 and
\begin{align}
  d \phi_t & =     \left[ \varepsilon^{2/3}t_0(x_t,y_t,\phi_t)
         + \varepsilon^{4/3}t_1(x_t,y_t,\phi_t)
         +\varepsilon^{2}t_2(x_t,y_t,\phi_t) \right. \nonumber \\[1ex]
    &  \hspace{10ex} + \left. \varepsilon^{8/3}t_3(x_t,y_t,\phi_t)
         +\varepsilon^{10/3}t_4(x_t,y_t,\phi_t) \right] dt   \nonumber \\
    &  +  \left[\varepsilon^{1/3}  t_5(x_t,y_t,\phi_t)
            + \varepsilon t_6(x_t,y_t,\phi_t)
             + \varepsilon^{5/3}  t_7(x_t,y_t,\phi_t) \right]dW_t
            \label{dphi} 
 \end{align} 
where 

(i) each of the functions $\e^{2/3}Q_0$, $\e^{4/3}Q_1$, $\e^2Q_2$, $\e^{8/3}Q_3$, $\e^{10/3}Q_4$, $\varepsilon^{2/3}t_0$, $\varepsilon^{4/3}t_1$, $\varepsilon^{2}t_2$, $\varepsilon^{8/3}t_3$ and $\varepsilon^{10/3}t_4$ is a sum of terms of the form
$\varepsilon^\kappa f(\phi)g(x,y)$ where each $f(\phi)$ is a trigonometric polynomial and
each $\varepsilon^\kappa g(x,y)$ is either an entry in $ \begin{bmatrix} 0 & \displaystyle{\frac{\e^{2/3}J(x,y)}{\Gamma(H(x,y))}} \\[1ex]
   0 & 0 \end{bmatrix}+\e^2M_0^\e(x,y)$ or an entry in
$\e^2 V_1.M_1^\e(x,y)$ or a product of two entries
in $\e M_1^\e(x,y)$, and

(ii) each of the functions $ \e^{1/3}Q_5$,
$\e Q_6$, $\e^{5/3}Q_7$, $\varepsilon^{1/3}t_5$, $\varepsilon t_6$
and $\varepsilon^{5/3}  t_7$ is a sum of terms of the form
$\varepsilon^\kappa f(\phi)g(x,y)$ where each $f(\phi)$ is a trigonometric polynomial and
each $\varepsilon^\kappa g(x,y)$ is an entry in $\e M_1^\e(x,y)$.

\n In particular 
\begin{equation} \label{Q}
 Q_0(x,y,\phi) = \frac{J(x,y)}{\Gamma(H(x,y))}\cos \phi \sin \phi
       +\Bigl( \Gamma(H(x,y)) (U_1.\alpha_1^2)(x,y)\Bigr)^2
              \left(\frac{1}{2}\cos^2 \phi - \sin^2 \phi \cos^2 \phi \right)
\end{equation}
and
 \begin{equation} \label{t_0}
 t_0(x,y,\phi) =  -\frac{J(x,y)}{\Gamma(H(x,y))}\sin^2 \phi
       - \Bigl(\Gamma(H(x,y)) (U_1.\alpha_1^2(x,y)\Bigr)^2
              \cos^3 \phi  \sin \phi
  \end{equation}
and
  \begin{equation} \label{t_5}
   t_5(x,y,\phi) =  \Gamma(H(x,y)) (U_1.\alpha_1^2)(x,y)
              \cos^2 \phi.
  \end{equation}
\end{lemma}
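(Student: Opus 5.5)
The plan is to apply It\^{o}'s formula in Stratonovich form to the functions $\log\|\widetilde{w}\|$ and $\phi = \arg \widetilde{w}$ of the linear Stratonovich equation \eqref{dweps}. Write \eqref{dweps} as $d\widetilde{w}_t = A_\e(x_t,y_t)\widetilde{w}_t\,dt + B_\e(x_t,y_t)\widetilde{w}_t\circ dW_t$. Here
\[
A_\e = \begin{bmatrix} 0 & \e^{2/3}J/\Gamma(H) \\ 0 & 0\end{bmatrix} + \e^2 M_0^\e, \qquad B_\e = \e M_1^\e .
\]
For a general Stratonovich linear system and $u = \widetilde{w}/\|\widetilde{w}\|=(\cos\phi,\sin\phi)^T$, the standard computation gives
\[
d\log\|\widetilde{w}_t\| = \langle u, A u\rangle\,dt + \langle u, Bu\rangle\circ dW_t ,
\]
\[
d\phi_t = \langle u^\perp, Au\rangle\,dt + \langle u^\perp, Bu\rangle\circ dW_t ,
\]
with $u^\perp = (-\sin\phi,\cos\phi)^T$. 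Converting to It\^{o} form produces correction terms $\tfrac12 (B^{\rm tot}. f)$, where $B^{\rm tot}$ is the full Stratonovich vector field acting on $(x,y,\phi)$ given by $(\e V_1, \langle u^\perp,Bu\rangle)$. These corrections therefore consist of $\tfrac12\e V_1.(\text{entries of } \e M_1^\e)$, which is $\tfrac12$ of an entry of $\e^2 V_1.M_1^\e$ times a trigonometric polynomial in $\phi$, plus $\phi$-derivatives of trigonometric polynomials multiplied by products of two entries of $\e M_1^\e$. Hence the drift of both $\log\|\widetilde{w}\|$ and $\phi$ is a sum of terms $\e^\kappa f(\phi)g(x,y)$ of the types claimed in (i). The diffusion coefficients $\langle u,Bu\rangle$ and $\langle u^\perp,Bu\rangle$ are linear in entries of $\e M_1^\e$, as claimed in (ii).

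Next I sort the terms by powers of $\e$. The entries of $M_i^\e$ in \eqref{Mi eps} carry the factors $\e^{-2/3}$ (lower-left), $1$ (diagonal) and $\e^{2/3}$ (upper-right). Thus $\e M_1^\e$ contributes orders $\e^{1/3},\e,\e^{5/3}$, giving the three noise terms $Q_5,Q_6,Q_7$ and $t_5,t_6,t_7$. The drift contributions come from three sources. Products of two entries of $\e M_1^\e$ give orders $\e^{2/3},\e^{4/3},\e^2,\e^{8/3},\e^{10/3}$. The terms $\e^2M_0^\e$ and $\e^2 V_1.M_1^\e$ give orders $\e^{4/3},\e^2,\e^{8/3}$. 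The nilpotent term has order $\e^{2/3}$. Collecting by order defines $Q_0,\dots,Q_4$ and $t_0,\dots,t_4$.

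The only real work is identifying the leading terms exactly. At order $\e^{1/3}$, $B$ has the single entry $b=\Gamma(H)(U_1.\alpha_1^2)$ in the lower-left, so $\langle u^\perp,Bu\rangle = b\cos^2\phi$, which gives \eqref{t_5}. At order $\e^{2/3}$ there are two contributions:
\begin{itemize}
\item The nilpotent drift gives $\tfrac{J}{\Gamma}\cos\phi\sin\phi$ in $\log\|\widetilde{w}\|$ and $-\tfrac{J}{\Gamma}\sin^2\phi$ in $\phi$.
\item The It\^{o} correction from the $\e^{1/3}$ noise with itself gives $\tfrac12 b\cos^2\phi\,\partial_\phi(b\cos\phi\sin\phi)\,\e^{2/3}$ in $\log\|\widetilde{w}\|$, since $\langle u,Bu\rangle = b\cos\phi\sin\phi$ at this order. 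This equals $\tfrac12 b^2\cos^2\phi(\cos^2\phi-\sin^2\phi)$, and using $\cos^2\phi = 1-\sin^2\phi$ it equals $b^2(\tfrac12\cos^2\phi - \sin^2\phi\cos^2\phi)$, matching \eqref{Q}. In $\phi$ the correction is $\tfrac12 b\cos^2\phi\,\partial_\phi(b\cos^2\phi) = -b^2\cos^3\phi\sin\phi$, matching \eqref{t_0}.
\end{itemize}
Derivatives of $b$ in the $V_1$ direction enter only at order $\e\cdot\e^{1/3}\cdot\e^{-0}$ or higher, namely through the $\e^2 V_1.M_1^\e$ term, whose lower-left entry has order $\e^{4/3}$. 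So they do not affect $Q_0$ or $t_0$.

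The main obstacle is careful bookkeeping of the It\^{o}–Stratonovich correction, in particular making sure the $V_1$-derivative term lands in $\e^{4/3}$ and not lower. Beyond that, validity is only asserted while $(x_t,y_t)\neq(0,0)$, which is handled as in Remark \ref{rem frame}.
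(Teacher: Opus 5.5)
Your proposal is correct and follows the same route as the paper. The paper simply defers to the proof of \cite[Lemma 4]{BG02}: polar coordinates for the Stratonovich linear system \eqref{dweps}, then conversion from Stratonovich to It\^{o} form. As the remark following the lemma notes, the correction terms from that conversion are exactly what produce the entries of $\e^2 V_1.M_1^\e$ and the products of two entries of $\e M_1^\e$ in $Q_0,\dots,Q_4$ and $t_0,\dots,t_4$. Your identification of the $\e^{1/3}$ and $\e^{2/3}$ coefficients checks out. The only caveat is notational: your $b=\Gamma(H)(U_1.\alpha_1^2)$ is the square root of the paper's $b(x,y)$ in \eqref{ab}.
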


\proof  This is a more detailed statement than \cite[Lemma 4]{BG02}, although the proof is exactly the same.  It will be useful later because it will enable matching of powers of $\e$ to rates of blow-up of functions $Q_i$ and $t_i$ as $(x,y) \to (0,0)$.
 \endproof

\begin{remark}  To carry out the transformation from $v_t$ to $\widetilde{w}_t$ it is convenient to use Stratonovich stochastic integrals, but in order to obtain a version of the Furstenberg-Khasminskii formula we need to consider It\^{o} stochastic integrals.  The transformation from Stratonovich to It\^{o} was carried out to obtain \eqref{log w} and \eqref{dphi} and it affects the functions $Q_i$ and $t_i$ for $0 \le i \le 4$.  In particular it is the reason why entries in $\e^2 V_1.M_1^\e(x,y)$ and products of two entries in $\e M_1^\e(x,y)$ appear in the formulas for the $Q_i$ and $t_i$ for $0 \le i \le 4$.   
\end{remark}

\section{Estimates for the system \eqref{add eps}}
 \label{sec AIN}

Here we specialize to the system \eqref{add eps} with the Hamiltonian $H(x,y) = x^2/2 + x^4/4 + y^2/2$ and the vector fields
  $$
  V_0(x,y) = \begin{bmatrix} 0 \\ -\beta y \end{bmatrix}, \quad \quad
    V_1(x,y) = \begin{bmatrix} 0 \\ \sigma \end{bmatrix},
$$
with $\beta > 0$ and $\sigma^2 >0$.  We have
 $$
 U_1(x,y) = \overline{\nabla}H(x,y) = \begin{bmatrix} y \\ -x-x^3 \end{bmatrix}, \quad \quad
 U_2(x,y) = \frac{\nabla H(x,y)}{\|\nabla H(x,y)\|^2} =  \frac{1}{(x+x^3)^2+y^2} \begin{bmatrix} x+x^3 \\ y \end{bmatrix}
          $$
and so
 \begin{align*}
  \alpha_0^1(x,y)&  =  \frac{\beta y(x+x^3)}{(x+x^3)^2 + y^2}, \\
   \alpha_0^2(x,y) & =  -\beta y^2,\\
  \alpha_1^1(x,y) &  =  -\frac{\sigma (x+x^3)}{(x+x^3)^2 + y^2},\\
   \alpha_1^2(x,y) & =  \sigma y.
    \end{align*}

 \subsection{Averaging around orbits; the functions $\Gamma(h)$ and $G(h)$} \label{sec orbitave}

The following calculations are small adaptations of those in \cite[Section 3.2]{BG02}.  Recall the functions $T(h)$, $G_1(h)$ and $G_2(h)$ defined in Section \ref{sec main}. 

\begin{lemma} \label{lem elliptic} Suppose $h > 0$.  We have
  \begin{align*}
  T(h) & =  4(1-2m^2)^{1/2} K(m), \\
  G_1(h) & =  -\frac{T'(h)}{T(h)} =  \left( \frac{1}{m(1-2m^2)}-\frac{E(m)}{m(1-m^2)K(m)} 
     \right) \frac{dm}{dh}> 0,  \\[1ex]
  G_2(h)  & =  \sigma^2\left(\frac{8}{15}h + \left(\frac{7}{15}+ \frac{12}{5}h\right)\left((1+4h)^{1/2}\left(\frac{2E(m)}{K(m)} -1\right) - 1\right) \right)
     \end{align*}
where
 $$m^2 = \frac{1}{2}\left( 1 - \frac{1}{\sqrt{4h+1}}\right)
 $$
and $K(m)$ and $E(m)$ denote the complete elliptic integrals of the
first and second kind (see \cite[Sect.\ 22.7]{WW62}).
\end{lemma}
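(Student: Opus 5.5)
We will parametrize the orbit $H^{-1}(h)$ by the turning point. Write $H(x,y)=h$ with $y=0$ at $x=\pm A$, where $A^2/2+A^4/4=h$, so $A^2=-1+\sqrt{1+4h}$. The first step is to compute the period. Using $\dot x = y = \pm\sqrt{2h - x^2 - x^4/2}$ we have
\[
T(h)=4\int_0^A \frac{dx}{\sqrt{2h-x^2-x^4/2}}
 =4\int_0^A\frac{\sqrt2\,dx}{\sqrt{(A^2-x^2)(x^2+B^2)}},
\]
where $-B^2$ is the other root of $x^4/2+x^2-2h$, so that $A^2B^2=4h$ and $B^2-A^2=2$. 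We then apply the standard reduction $x=A\cos\psi$ (Byrd--Friedman type) for $\int_0^A dx/\sqrt{(A^2-x^2)(x^2+B^2)}$. This gives $K(m)/\sqrt{A^2+B^2}$ with modulus $m^2=A^2/(A^2+B^2)$. Since $A^2+B^2=2\sqrt{1+4h}$, this yields $m^2=\tfrac12(1-1/\sqrt{1+4h})$, matching the statement, and $T(h)=4\sqrt2 K(m)/\sqrt{2\sqrt{1+4h}}=4(1-2m^2)^{1/2}K(m)$ because $1-2m^2=(1+4h)^{-1/2}$.

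Second, for $G_1$ we differentiate $\log T$ using the chain rule in $m$. We use $\frac{d}{dm}(1-2m^2)^{1/2}=-2m(1-2m^2)^{-1/2}$ and the classical identity $K'(m)=\frac{E(m)}{m(1-m^2)}-\frac{K(m)}{m}$. Then
\[
\frac{T'}{T}=\Bigl(-\frac{2m}{1-2m^2}+\frac{E}{m(1-m^2)K}-\frac1m\Bigr)\frac{dm}{dh},
\qquad -\frac{2m}{1-2m^2}-\frac1m=-\frac{1}{m(1-2m^2)},
\]
which gives the stated formula. For positivity, note that $dm/dh>0$ and $m^2<1/2$. It then suffices to show $E/K<(1-m^2)/(1-2m^2)$, which is immediate since $E\le K$ and $(1-m^2)/(1-2m^2)>1$. (Equivalently, $T$ is decreasing, which follows from the hardening potential.)

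Third, for $G_2$ we need the function being averaged. Here $V_1.H=\sigma y$, and $\overline{\nabla}H.(\sigma y)=U_1.(\sigma y)=-\sigma(x+x^3)$. So $G_2(h)=\sigma^2\int (x+x^3)^2\,dm_h$, with $dm_h=dt/T$, that is,
\[
G_2(h)=\frac{4\sigma^2}{T(h)}\int_0^A\frac{(x+x^3)^2\,dx}{\sqrt{2h-x^2-x^4/2}}.
\]
We expect this step to be the main obstacle. The plan is to reduce the even moments $I_{2k}=\int_0^A x^{2k}dx/\sqrt{P(x)}$ for $k=1,2,3$, where $P=2h-x^2-x^4/2$, to $I_0$ and $I_2$. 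This uses the recursion obtained from $\frac{d}{dx}\bigl(x^{2k-1}\sqrt{P}\bigr)$, whose boundary terms vanish at $0$ and $A$. That recursion expresses $I_{2k+2}$ in terms of $I_{2k},I_{2k-2}$ with coefficients polynomial in $h$. Next, $I_2$ is expressed via the substitution as a combination of $K$ and $E$; for instance, $x^2=A^2\cos^2\psi$ leads to $\int \cos^2 = (E-(1-m^2)K)/m^2$-type terms. Dividing by $I_0$ (which is proportional to $T$) produces a result of the form $\alpha(h)+\beta(h)E/K$. Finally, we rewrite $A^2$ and $m$ in terms of $\sqrt{1+4h}$ and collect terms to match $\frac{8}{15}h+(\frac7{15}+\frac{12}5h)\bigl((1+4h)^{1/2}(2E/K-1)-1\bigr)$. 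As a sanity check we will verify that the small-$h$ limit gives $G_2\sim\sigma^2 h$, consistent with the harmonic average of $x^2$.
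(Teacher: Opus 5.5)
Your proposal is correct and follows essentially the same route as the paper. The paper cites \cite{BG02} for $T$ and $G_1$, uses $E(m)<K(m)$ for positivity, and for $G_2$ makes your reduction $G_2(h)=\sigma^2\int_{H^{-1}(h)}(x+x^3)^2\,dm_h$ before deferring to the orbit-averaging method of \cite{BG02}, which your moment recursion reconstructs explicitly. With $\langle x^2\rangle_h=(1+4h)^{1/2}(2E/K-1)-1$ and $I_{2k+2}=\frac{2}{2k+1}\bigl(2h(2k-1)I_{2k-2}-2kI_{2k}\bigr)$, that recursion reproduces the stated formula for $G_2$ exactly.
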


\proof  The formulas for $T(h)$ and $G_1(h)$ depend only on the function $H$ and are given in \cite[Section 3.2]{BG02}.  The fact that $E(m) < K(m)$ for $m \neq 0$ implies $G_1(h) > 0$.  The formula for $G_2(h)$ uses the same method as in the proof of \cite[Corollary 3]{BG02} together with \cite[Eqn (42)]{BG02} but now starting with $V_1.H(x,y) = \sigma y$ so that 
   $$
  G_2(h) = \int_{H^{-1}(h)} \bigl((U_1.(V_1.H))(x,y)\bigr)^2 dm_h(x,y)   =  \sigma^2 \int_{H^{-1}(h)} \left(x+x^3\right)^2\,dm_h(x,y).
   $$
   \endproof

Information about the behavior of the complete elliptic
integrals $K(m)$ and $E(m)$ for $m$ near 0 and $1/\sqrt{2}$ now gives the following result.

\begin{lemma} \label{lem Gi}
(i)  $G_1(h)$ and $G_2(h)$ are analytic functions of $h$ in a
neighborhood of 0.  As $h \to 0$ we have
 \begin{align*}
 G_1(h) & =  \frac{3}{4} - \frac{87h}{32} +
 \frac{657h^2}{64} + O(h^3), \\
   G_2(h) & = h+\frac{15 h^2}{8}-\frac{25 h^3}{16}+ O(h^4).
 \end{align*}

(ii) As $h \to \infty$ we have
 $$
 G_1(h) \sim \frac{1}{4h}, \hspace{3em} G_1'(h) \sim
 -\frac{1}{4h^2},
   \hspace{3em} G_1''(h) \sim \frac{1}{2h^3},
 $$
 and
 $$
 G_2(h) \sim \frac{24}{5}\sigma^2 \gamma_1 h^{3/2}, \hspace{2em} G_2'(h) \sim
\frac{36}{5}\sigma^2 \gamma_1 h^{1/2},
   \hspace{2em} G_2''(h) \sim  \frac{18}{5}\sigma^2 \gamma_1 h^{-1/2},
 $$
where $\gamma_1 = 2E(2^{-1/2})/K(2^{-1/2}) - 1 \approx 0.457$.
\end{lemma}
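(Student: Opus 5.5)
This is an asymptotic statement about explicit elliptic-integral formulas, so I will work directly from the expressions in Lemma \ref{lem elliptic}, with $m^2 = \frac12(1-(1+4h)^{-1/2})$. The two limits are treated separately. As $h\to 0$ we have $m\to 0$; as $h\to\infty$ we have $m\to 2^{-1/2}$, where $1-2m^2=(1+4h)^{-1/2}\to 0$. The special value $m=2^{-1/2}$ is the lemniscatic case, with $m^2 = 1-m^2$, so $K$ and $E$ are analytic there and no logarithmic singularity arises.

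\textbf{Part (i).} First I would note that $m^2$ is an analytic function of $h$ near $0$, with $m^2 = h - 3h^2 + \cdots$. The complete integrals $K(m)$ and $E(m)$ are analytic power series in $m^2$ near $0$:
\[
K = \tfrac{\pi}{2}\bigl(1+\tfrac{m^2}{4}+\tfrac{9m^4}{64}+\cdots\bigr), \qquad E = \tfrac{\pi}{2}\bigl(1-\tfrac{m^2}{4}-\tfrac{3m^4}{64}-\cdots\bigr).
\]
Hence $T(h)=4(1-2m^2)^{1/2}K(m)$ is analytic in $h$ near $0$ with $T(0)=2\pi$. So $G_1=-T'/T$ is analytic, and its Taylor coefficients follow from expanding $T$ to order $h^3$. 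This route avoids the apparently singular factor $dm/dh \sim 1/(2\sqrt h)$ in the displayed formula for $G_1$.

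For $G_2$, the factor $(1+4h)^{1/2}(2E/K-1)-1$ is analytic in $h$ and vanishes at $h=0$. The coefficients $h$, $15/8$, $-25/16$ then come from routine series multiplication. As a check, $G_2 = \sigma^2\int (x+x^3)^2\,dm_h$, which to leading order is $\sigma^2\cdot h$ on the near-circular orbit. The $\sigma^2$ factor appears to be missing from the displayed expansion; I would carry it along and treat it as a typo or a normalization.

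\textbf{Part (ii).} Set $\eta = 1-2m^2 = (1+4h)^{-1/2}$, which tends to $0$. Near $m_0 = 2^{-1/2}$, the functions $K$ and $E$ are analytic in $m^2$, so $K(m) = K_0 + O(\eta)$ and $E(m)=E_0+O(\eta)$.

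For $G_1$, I would use $T = 4\eta^{1/2}K(m)$, so that
\[
G_1 = -\frac{d}{dh}\Bigl[\tfrac12\log\eta + \log K(m)\Bigr].
\]
Here $\frac{d}{dh}\log\eta = -2/(1+4h) \sim -1/(2h)$, which gives a leading term of $1/(1+4h)\sim 1/(4h)$. The $\log K$ term contributes $O(d\eta/dh)=O(h^{-3/2})$, which is of lower order. Since everything is an analytic function of $\eta$ composed with the explicit function $\eta(h)$, the derivative asymptotics $G_1'\sim -1/(4h^2)$ and $G_1''\sim 1/(2h^3)$ follow by differentiating the expansion $G_1 = (1+4h)^{-1} + (\text{analytic in }\eta)\cdot\eta'(h)$. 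The orders are justified because $\eta'(h)\sim -\tfrac14 h^{-3/2}$ and its derivatives gain powers of $h^{-1}$.

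For $G_2$, I would write $(1+4h)^{1/2}=\eta^{-1}\sim 2h^{1/2}$ and $2E/K-1 = \gamma_1 + O(\eta)$. The dominant term is then $\frac{12}{5}h\cdot 2h^{1/2}\gamma_1 = \frac{24}{5}\gamma_1 h^{3/2}$, and the remaining terms are $O(h)$. Differentiation is handled the same way, by writing $G_2$ as an explicit expression in $h$ and $\eta$ with $\eta$-analytic coefficients.

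The main obstacle is the bookkeeping in part (i), namely confirming the exact coefficients. The derivative asymptotics in (ii) also need care to ensure that no cancellation occurs; structuring everything as analytic functions of $\eta$ handles this.
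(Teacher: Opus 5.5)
Your proposal is correct and is essentially the paper's argument: the paper simply invokes the behavior of $K(m)$ and $E(m)$ near $m=0$ and $m=2^{-1/2}$ in the explicit formulas of Lemma~\ref{lem elliptic}. Writing $G_1=-\frac{d}{dh}\log T$ and expressing everything through $\eta=(1+4h)^{-1/2}$ (with $d\eta/dh=-2\eta^{3}$) is a clean way to organize this and makes the derivative asymptotics in (ii) transparent; the coefficients $\tfrac34,-\tfrac{87}{32},\tfrac{657}{64}$ and $1,\tfrac{15}{8},-\tfrac{25}{16}$ do come out of the series, and you are right that the $G_2$ expansion in (i) is missing the factor $\sigma^2$, as Lemma~\ref{lem GG} confirms.
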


The formulas $\Gamma(h) = \bigl(G_1(h)/G_2(h)\bigr)^{1/3}$
and $G(h) = \bigl(G_1(h)\bigr)^{2/3}\bigl(G_2(h)\bigr)^{1/3}$, see (\ref{Gamma},\ref{G}), now give the following result.

\begin{lemma} \label{lem GG}
(i) As $h \to 0$ we have 
 $$
  \Gamma(h)  \sim   (3/4 \sigma^2)^{1/3}h^{-1/3}, \hspace{3em} \frac{\Gamma'(h)}{\Gamma(h)} \sim  -\frac{1}{3h}, \hspace{3em}  \frac{\Gamma''(h)}{\Gamma(h)} \sim  \frac{4}{9h^2},
 $$
and as $h \to \infty$
  $$
   \Gamma(h)  \sim  (5/96\gamma_1 \sigma^2 )^{1/3}h^{-5/6}, \hspace{3em} \frac{\Gamma'(h)}{\Gamma(h)} \sim   -\frac{5}{6h}, \hspace{3em} \frac{\Gamma''(h)}{\Gamma(h)} \sim \frac{55}{36h^2}.
   $$

(ii) As $h \to 0$ we have 
 $$
  G(h)  \sim   (3\sigma/4)^{2/3}h^{1/3}, \hspace{3em} \frac{G'(h)}{G(h)} \sim  \frac{1}{3h}, \hspace{3em}  \frac{G''(h)}{G(h)} \sim  -\frac{2}{9h^2},
 $$
and as $h \to \infty$
  $$
   G(h)  \sim  (3 \gamma_1 \sigma^2/10)^{1/3}h^{-1/6}, \hspace{3em} \frac{G'(h)}{G(h)} \sim   -\frac{1}{6h}, \hspace{3em} \frac{G''(h)}{G(h)} \sim \frac{7}{36h^2}.
   $$
   \end{lemma}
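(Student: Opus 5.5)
The asymptotics in Lemma \ref{lem GG} follow directly from Lemma \ref{lem Gi} by taking logarithms. Write
\[
\log \Gamma = \tfrac13(\log G_1 - \log G_2), \qquad \log G = \tfrac23 \log G_1 + \tfrac13 \log G_2,
\]
which is legitimate because $G_1(h)>0$ by Lemma \ref{lem elliptic} and $G_2(h)>0$ for $h>0$. Then $\Gamma'/\Gamma = \tfrac13(G_1'/G_1 - G_2'/G_2)$ and $G'/G = \tfrac23 G_1'/G_1 + \tfrac13 G_2'/G_2$. For the second derivatives we use the identity $F''/F = (\log F)'' + ((\log F)')^2$ together with $(\log G_i)'' = G_i''/G_i - (G_i'/G_i)^2$. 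Every quantity in part (i) and part (ii) is therefore an explicit combination of $G_i'/G_i$ and $G_i''/G_i$, and it suffices to find the leading behaviour of these ratios at $0$ and at $\infty$.

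\emph{Near $h=0$.} By Lemma \ref{lem Gi}(i), $G_1$ and $G_2$ are analytic with $G_1(0)=3/4$ and $G_2(h)=h(1+O(h))$. Hence:
\begin{itemize}
\item $G_1'/G_1 = O(1)$ and $G_1''/G_1=O(1)$;
\item $G_2'/G_2 = 1/h + O(1)$ and $G_2''/G_2 = O(1/h)$, since $G_2''(0)=15/4$ is finite.
\end{itemize}
Substituting, $\Gamma \sim (3/4)^{1/3} h^{-1/3}$ and $\Gamma'/\Gamma \sim -1/(3h)$. Also $(\log\Gamma)'' \sim \tfrac13 \cdot h^{-2}$ and $((\log\Gamma)')^2\sim \tfrac19 h^{-2}$, giving $\Gamma''/\Gamma \sim 4/(9h^2)$. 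Similarly $G\sim(3/4)^{2/3}h^{1/3}$, $G'/G\sim 1/(3h)$ and $G''/G\sim(-\tfrac13+\tfrac19)h^{-2}=-2/(9h^2)$.

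One point needs care: the $\sigma$-dependence. By Lemma \ref{lem elliptic}, $G_2$ carries the factor $\sigma^2$, so its expansion in Lemma \ref{lem Gi}(i) should read $\sigma^2(h+\cdots)$. I would carry this factor through, which produces the constants $(3/4\sigma^2)^{1/3}$ and $(3\sigma/4)^{2/3}$ as stated, reading the latter as $(9\sigma^2/16)^{1/3}$.

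\emph{Near $h=\infty$.} Lemma \ref{lem Gi}(ii) gives $G_1'/G_1 \sim -1/h$ and $G_1''/G_1\sim 2/h^2$, while $G_2'/G_2\sim \tfrac32 h^{-1}$ and $G_2''/G_2 \sim \tfrac34 h^{-2}$. Therefore:
\begin{itemize}
\item $\Gamma \sim \bigl(\tfrac14 h^{-1}/(\tfrac{24}{5}\sigma^2\gamma_1 h^{3/2})\bigr)^{1/3}=(5/96\gamma_1\sigma^2)^{1/3}h^{-5/6}$ and $\Gamma'/\Gamma\sim \tfrac13(-1-\tfrac32)h^{-1}=-5/(6h)$;
\item $G\sim (1/16)^{1/3}(24\sigma^2\gamma_1/5)^{1/3}h^{-2/3+1/2} = (3\gamma_1\sigma^2/10)^{1/3}h^{-1/6}$ and $G'/G\sim(-\tfrac23+\tfrac12)h^{-1}=-1/(6h)$.
\end{itemize}

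The main obstacle is the second-derivative ratios at $\infty$. Leading orders alone are insufficient there, because $(\log G_i)''$ involves a difference of terms of the same order $h^{-2}$. For example, $(\log G_1)''=G_1''/G_1-(G_1'/G_1)^2\sim (2-1)h^{-2}$ does not cancel, but one must check that the $o(h^{-2})$ errors stay lower order. Since each stated relation is of the form $\sim c\,h^{-k}$ with the corresponding leading coefficient nonzero, the errors are $o(h^{-2})$ and the computation goes through. It gives $(\log G_1)''\sim h^{-2}$ and $(\log G_2)''\sim(\tfrac34-\tfrac94)h^{-2}=-\tfrac32h^{-2}$. Consequently:
\begin{itemize}
\item $(\log\Gamma)''\sim\tfrac13(1+\tfrac32)h^{-2}=\tfrac56h^{-2}$, so $\Gamma''/\Gamma\sim(\tfrac56+\tfrac{25}{36})h^{-2}=55/(36h^2)$;
\item $(\log G)''\sim(\tfrac23-\tfrac12)h^{-2}=\tfrac16h^{-2}$, so $G''/G\sim(\tfrac16+\tfrac1{36})h^{-2}=7/(36h^2)$.
\end{itemize}
If the nondegeneracy of the leading coefficients were in doubt, I would instead expand $K(m)$ and $E(m)$ at $m=2^{-1/2}$ to the next order to confirm it.
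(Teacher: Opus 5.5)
Your proposal is correct and is essentially the paper's argument: the paper simply asserts that Lemma \ref{lem GG} follows from Lemma \ref{lem Gi} via $\Gamma = (G_1/G_2)^{1/3}$ and $G = G_1^{2/3}G_2^{1/3}$, and your logarithmic-derivative computation fills in exactly that calculation. This includes your check that the $o(h^{-2})$ errors in the second-derivative ratios at infinity are harmless because the combined leading coefficients are nonzero. You are also right that the expansion of $G_2$ in Lemma \ref{lem Gi}(i) should carry the factor $\sigma^2$ from Lemma \ref{lem elliptic}, which is what produces the stated constants $(3/4\sigma^2)^{1/3}$ and $(3\sigma/4)^{2/3}$.
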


\subsection{Asymptotics for the $Q_i$ and $t_i$}

 Recall the matrices $M_i(x,y)$ in \eqref{Mi}.

\begin{lemma}  \label{lem Mest} (i) As $r \to 0$ we have
         $$
  M_0(x,y)  =  \begin{bmatrix}
        {\cal O}(1)
        &   {\cal O}(r^{2/3})
          \\[1ex]
          {\cal O}(r^{4/3}) &  {\cal O}(1)
         \end{bmatrix} 
         $$
and  
 $$
  M_1(x,y)  =  \begin{bmatrix}
        {\cal O}(r^{-1})
        &   {\cal O}(r^{-7/3})
          \\[1ex]
          {\cal O}(r^{1/3}) &  {\cal O}(r^{-1})
         \end{bmatrix} 
         $$
 and 
 $$
   V_1.M_1(x,y) = \begin{bmatrix}
        {\cal O}(r^{-2})
        &   {\cal O}(r^{-10/3})
          \\[1ex]
          {\cal O}(r^{-2/3}) &  {\cal O}(r^{-2})
         \end{bmatrix}. 
         $$       

(ii)  As $r \to \infty$ all the entries of $M_0(x,y)$ and $M_1(x,y)$ and $V_1. M_1(x,y)$ have at most polynomial growth. 
  \end{lemma}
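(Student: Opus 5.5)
The plan is to compute every ingredient of \eqref{Mi} explicitly for the system \eqref{add eps} and estimate its size as $r = (x^2+y^2)^{1/2} \to 0$ and as $r \to \infty$. The ingredients are $\alpha_i^j$, the derivatives $U_k.\alpha_i^j$, $J$, $\Gamma(H)$ and $(\log\Gamma)'(H)$. Near the origin $H(x,y) = r^2/2 + {\cal O}(r^4)$, hence $H \asymp r^2$. By Lemma \ref{lem GG}(i) this gives
\begin{equation*}
\Gamma(H) \asymp r^{-2/3}, \qquad 1/\Gamma(H) \asymp r^{2/3}, \qquad (\log\Gamma)'(H) = {\cal O}(r^{-2}).
\end{equation*}
For the vector fields, $\|U_1\| \asymp r$, and the coefficients of $U_2$ are ${\cal O}(r^{-1})$. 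Each application of $U_1$ or $U_2$ to a function homogeneous-like of degree $k$ therefore produces a function of degree $k$ or $k-2$, respectively; this is checked using the quotient structure with denominator $D = (x+x^3)^2+y^2 \asymp r^2$. For $J$, the numerator is ${\cal O}(r^4)$ and the denominator is $D^2 \asymp r^4$, so $J = {\cal O}(1)$.

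\textbf{The entries of $M_0$.} Here $\alpha_0^1 = \beta y(x+x^3)/D = {\cal O}(1)$ and $\alpha_0^2 = -\beta y^2 = {\cal O}(r^2)$. Then:
\begin{itemize}
\item the $(1,1)$ entry is $U_1.\alpha_0^1 - [J + (\log\Gamma)']\alpha_0^2 = {\cal O}(1) + {\cal O}(r^{-2})\,{\cal O}(r^2) = {\cal O}(1)$;
\item the $(1,2)$ entry is $\bigl(U_2.\alpha_0^1 + J\alpha_0^1\bigr)/\Gamma$. The numerator would naively be ${\cal O}(r^{-2})$, which would give ${\cal O}(r^{-4/3})$. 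The claimed ${\cal O}(r^{2/3})$ therefore needs cancellation, and I will check it directly. Here $\alpha_0^1 = \beta\,\langle V_0/\beta,U_1\rangle/\|U_1\|^2$. The leading part is $xy/(x^2+y^2)$, which is homogeneous of degree $0$ in the linearized problem. For a degree-zero homogeneous function $f$, $U_2.f$ at leading order is the radial derivative divided by $r$, and this vanishes. So $U_2.\alpha_0^1$ is ${\cal O}(1)$ from the quartic corrections, and $J\alpha_0^1 = {\cal O}(1)$, giving ${\cal O}(1)\cdot r^{2/3}$;
\item the $(2,1)$ entry is $\Gamma\, U_1.\alpha_0^2 = r^{-2/3}\,{\cal O}(r^2)$;
\item the $(2,2)$ entry is $U_2.\alpha_0^2 = {\cal O}(1)$.
\end{itemize}

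\textbf{The entries of $M_1$ and $V_1.M_1$.} For $M_1$, $\alpha_1^1 = -\sigma(x+x^3)/D = {\cal O}(r^{-1})$ and $\alpha_1^2 = \sigma y$. The same bookkeeping gives:
\begin{itemize}
\item $(1,1)$: $U_1.\alpha_1^1 = {\cal O}(r^{-1})$, and $[J+(\log\Gamma)']\sigma y = {\cal O}(r^{-1})$;
\item $(1,2)$: $U_2.\alpha_1^1 = {\cal O}(r^{-3})$ (degree $-1$ homogeneous, so no cancellation), and dividing by $\Gamma$ gives ${\cal O}(r^{-7/3})$;
\item $(2,1)$: $\Gamma\,U_1.(\sigma y) = -\sigma\Gamma\,(x+x^3) = {\cal O}(r^{1/3})$;
\item $(2,2)$: $U_2.(\sigma y) = \sigma y/D = {\cal O}(r^{-1})$.
\end{itemize}
For $V_1.M_1 = \sigma\partial_y M_1$, differentiation with respect to $y$ costs one power of $r$ on each entry. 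For the $\Gamma(H)$ factors, $\partial_y \Gamma(H) = \Gamma'(H)\,y = {\cal O}(r^{-8/3})\,{\cal O}(r) = {\cal O}(r^{-5/3})$, which is again one extra power of $r^{-1}$ relative to $\Gamma$. This yields the stated orders $r^{-2}, r^{-10/3}, r^{-2/3}, r^{-2}$. To make "costs one power of $r$" rigorous, I would write each function as $r^k$ times a smooth function of $(\theta, r)$ in polar coordinates, using $D = r^2(1 + {\cal O}(r^2))$ with a smooth correction, and the analyticity of $G_1, G_2$ at $0$ from Lemma \ref{lem Gi}(i). This shows that $\Gamma(h)h^{1/3}$ is smooth in $h$, hence in $r^2$.

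\textbf{Behaviour as $r\to\infty$.} Part (ii) is easier. All the $\alpha$'s and $J$ are rational functions whose denominator $D$ is bounded below away from the origin, so they and their derivatives grow at most polynomially. Lemma \ref{lem GG}(i), together with Lemma \ref{lem Gi}(ii) for the derivatives, shows that $\Gamma(H)$, $1/\Gamma(H)$ and $(\log\Gamma)'(H)$ have at most polynomial growth in $H$, hence in $r$.

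The main obstacle is the cancellation in the $(1,2)$ entry of $M_0$. It has to be verified carefully, by expanding $\alpha_0^1$ as a degree-zero homogeneous leading term plus an ${\cal O}(r^2)$ correction and checking that the radial derivative kills the leading term.
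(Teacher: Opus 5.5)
Your proposal is correct and is essentially the paper's argument. The paper only asserts that the estimates follow by direct calculation from the explicit formulas for $\alpha_i^j$ and $J$ together with the asymptotics of $\Gamma$ in Lemma \ref{lem GG}, omitting the details; your homogeneity bookkeeping supplies exactly those details, including the one delicate cancellation, which an exact computation confirms: $U_2.\alpha_0^1 = 3\beta x^2(x+x^3)y\bigl(y^2-(x+x^3)^2\bigr)/D^3 = {\cal O}(1)$ with $D=(x+x^3)^2+y^2$, giving the ${\cal O}(r^{2/3})$ bound for the $(1,2)$ entry of $M_0$.
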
   

\proof Using the estimates on $\Gamma(h)$ in Lemma \ref{lem GG} together with explicit formulas
for the $\alpha_i^j(x,y)$ and $J(x,y)$ it is a matter of direct calculation to estimate
each of the entries in $M_0(x,y)$ and $M_1(x,y)$ and $V_1. M_1(x,y)$ as $r \to 0$ and $r \to \infty$.  We omit the lengthy details.
\endproof

We can now obtain estimates on the functions $Q_i(x,y,\phi)$ and $t_i(x,y,\phi)$ in Lemma \ref{lem phi logw}.

\begin{lemma} \label{lem Qt} As $r = \sqrt{x^2+y^2} \to 0$
  \begin{equation} \label{Qt growth}
    \begin{split} Q_0(x,y,\phi) \mbox{ and }t_0(x,y,\phi) & \mbox{ are } {\cal O}(r^{2/3}),\\ 
     Q_1(x,y,\phi) \mbox{ and }t_1(x,y,\phi) & \mbox{ are } {\cal O}(r^{-2/3}),\\
     Q_2(x,y,\phi) \mbox{ and }t_2(x,y,\phi) & \mbox{ are } {\cal O}(r^{-2}),\\
     Q_3(x,y,\phi) \mbox{ and }t_3(x,y,\phi) & \mbox{ are } {\cal O}(r^{-10/3}),\\
     Q_4(x,y,\phi) \mbox{ and }t_4(x,y,\phi) & \mbox{ are } {\cal O}(r^{-14/3}),\\
     Q_5(x,y,\phi) \mbox{ and }t_5(x,y,\phi) & \mbox{ are } {\cal O}(r^{1/3}),\\
     Q_6(x,y,\phi) \mbox{ and }t_6(x,y,\phi) & \mbox{ are } {\cal O}(r^{-1}),\\
     Q_7(x,y,\phi) \mbox{ and }t_7(x,y,\phi) & \mbox{ are } {\cal O}(r^{-7/3}).
   \end{split}
   \end{equation}  All functions have at most polynomial growth as $(x,y) \to \infty$.
 \end{lemma}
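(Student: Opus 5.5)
The plan is to read off each estimate from the structural description in Lemma \ref{lem phi logw}, combined with the entry-wise bounds of Lemma \ref{lem Mest}. Each $Q_i$ and $t_i$ is a finite sum of terms $f(\phi)g(x,y)$ with $f$ a bounded trigonometric polynomial. So it suffices to bound each admissible coefficient $g$, after stripping off the power of $\e$ it carries, and to match that power with the one attached to $Q_i$ or $t_i$ in \eqref{log w} and \eqref{dphi}.

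\textbf{Noise terms.} For $Q_5,Q_6,Q_7$ and $t_5,t_6,t_7$, part (ii) of Lemma \ref{lem phi logw} says the coefficients are entries of $\e M_1^\e$. By \eqref{Mi eps}, the entries of $\e M_1^\e$ carry powers $\e^{1/3}$ (lower left), $\e$ (diagonal) and $\e^{5/3}$ (upper right). By Lemma \ref{lem Mest}(i), the corresponding entries of $M_1$ are $\mathcal{O}(r^{1/3})$, $\mathcal{O}(r^{-1})$ and $\mathcal{O}(r^{-7/3})$, which gives exactly the stated rates. The explicit formula \eqref{t_5} checks the first case: $\Gamma(H)\,U_1.\alpha_1^2$ is $r^{-2/3}\cdot\mathcal{O}(r)$, using $H\asymp r^2$ and $\Gamma(h)\sim c h^{-1/3}$ from Lemma \ref{lem GG}.

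\textbf{Drift terms.} For $Q_0,\dots,Q_4$ and $t_0,\dots,t_4$ there are three kinds of coefficient.

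\emph{The $J/\Gamma$ entry.} This carries $\e^{2/3}$. It is $\mathcal{O}(r^{2/3})$, since $J$ is bounded near the origin ($H$ is nondegenerate quadratic there, and $J$ is homogeneous of degree $0$ to leading order) and $1/\Gamma = \mathcal{O}(r^{2/3})$.

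\emph{Entries of $\e^2M_0^\e$ and of $\e^2 V_1.M_1^\e$.} These carry $\e^{4/3},\e^2,\e^{8/3}$. By Lemma \ref{lem Mest}:
\begin{itemize}
\item $M_0$ has entries $\mathcal{O}(r^{4/3})$, $\mathcal{O}(1)$, $\mathcal{O}(r^{2/3})$, each no worse than the stated bound at that power;
\item $V_1.M_1$ has entries $\mathcal{O}(r^{-2/3})$, $\mathcal{O}(r^{-2})$, $\mathcal{O}(r^{-10/3})$, matching $Q_1,Q_2,Q_3$ exactly.
\end{itemize}

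\emph{Products of two entries of $\e M_1^\e$.} Their $\e$-powers add, and their blow-up rates add. Each entry of power $\e^{1/3+2k/3}$ ($k=0,1,2$) has rate $r^{1/3-4k/3}$. A product with total power $\e^{2/3+2j/3}$ ($j=k+k'$) therefore has rate $r^{2/3-4j/3}$. For $j=0,\dots,4$ this gives $r^{2/3},r^{-2/3},r^{-2},r^{-10/3},r^{-14/3}$, matching the table \eqref{Qt growth}. The $j=0$ case agrees with the explicit $\Gamma^2(U_1.\alpha_1^2)^2=\mathcal{O}(r^{2/3})$ term in \eqref{Q} and \eqref{t_0}.

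\textbf{Large $r$.} Polynomial growth as $r\to\infty$ follows the same way from Lemma \ref{lem Mest}(ii), together with the polynomial behaviour of $J$, $\Gamma$ and $1/\Gamma$ from Lemma \ref{lem GG}.

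\textbf{Main obstacle.} The only delicate point is bookkeeping. One must make sure that every term appearing in the It\^o correction is correctly assigned to its $\e$-power, so that no mixed product lands at a power with a worse rate than claimed. The homogeneity relation above ($\e^{2/3}$ per unit of $r^{-4/3}$ lost) is the invariant I would verify first; it makes the whole table consistent.
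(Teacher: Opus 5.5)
Your proposal is correct and is essentially the paper's own proof: the paper likewise converts Lemma \ref{lem Mest} into $\e$-weighted bounds on the entries of $\e^2M_0^\e$, $\e M_1^\e$ and $\e^2V_1.M_1^\e$, adds $J/\Gamma(H)={\cal O}(r^{2/3})$, and reads the table off Lemma \ref{lem phi logw}(i,ii); your power counting (one extra factor $\e^{2/3}$ for each further loss of $r^{-4/3}$) makes explicit the bookkeeping the paper leaves implicit. One small correction concerns your parenthetical on $J$: nondegeneracy of the quadratic part is not what makes $J$ bounded, since for $H=(ax^2+by^2)/2$ with $a\neq b$ the function $J$ is of order $r^{-2}$. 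It is bounded here because $H_{xy}\equiv 0$ and $H_{xx}-H_{yy}=3x^2={\cal O}(r^2)$, so the numerator of $J$ is $-3x^2\bigl(y^2-(x+x^3)^2\bigr)={\cal O}(r^4)$, which matches the ${\cal O}(r^4)$ denominator.
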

 
\proof 
Lemma \ref{lem Mest} implies 
            $$
  \e^2M_0^\e(x,y)  =  \begin{bmatrix}
        \e^2{\cal O}(1)
        &   \e^{8/3}{\cal O}(r^{2/3})
          \\[1ex]
          \e^{4/3}{\cal O}(r^{4/3}) &  \e^2{\cal O}(1)
         \end{bmatrix} 
         $$
and  
   $$
  \e M_1^\e(x,y)  =  \begin{bmatrix}
        \e{\cal O}(r^{-1})
        &   \e^{5/3}{\cal O}(r^{-7/3})
          \\[1ex]
          \e^{1/3}{\cal O}(r^{1/3}) &  \e{\cal O}(r^{-1})
         \end{bmatrix} 
         $$
           and
  $$
   \e^2V_1.M_1^\e(x,y) = \begin{bmatrix}
        \e^2{\cal O}(r^{-2})
        &   \e^{8/3}{\cal O}(r^{-10/3})
          \\[1ex]
          \e^{4/3}{\cal O}(r^{-2/3}) &  \e^2{\cal O}(r^{-2})
         \end{bmatrix} 
         $$
  as $r\to 0$, where all the ${\cal O}(r^k)$ terms depend only on $(x,y)$ and not on $\e$.
Also $J(x,y) = {\cal O}(1)$ as $r \to 0$, so that by Lemma \ref{lem GG}(i) we have $J(x,y)/\Gamma(H(x,y)) = {\cal O}(r^{2/3})$.  The result now follows from the characterization of the $Q_i$ and $t_i$ in Lemma \ref{lem phi logw}(i,ii).  \endproof

\begin{remark} \label{rem r e}  In steps (ii) and (v) of the proof of Theorem \ref{thm lamQ} it is important that higher rates of blow-up for the functions $Q_1, \ldots, Q_7$ are associated with higher powers of $\e$ in Lemma \ref{lem phi logw}.  Similarly in steps (i), (ii) and (iv) of the proof of Theorem \ref{thm QG} it is important that higher rates of blow-up for the functions $t_1, \ldots, t_7$ and the associated operators ${\cal M}_1, \ldots, {\cal M}_4$ and ${\cal N}_5, \ldots,{\cal N}_7$ are associated with higher powers of $\e$. 
\end{remark} 
 
 \begin{remark}  \label{rem trunc} The system with multiplicative noise treated in \cite{BG02} had a different $V_1(x,y)$ and hence a different $\Gamma(h)  \sim (3/8)^{1/3} h^{-2/3}$ as $h \to 0$.  Then $M_0(x,y)$ and $M_1(x,y)$ and $V_1.M_1(x,y)$ were all $\begin{bmatrix}
        {\cal O}(1)
        &   {\cal O}(r^{4/3})
          \\[1ex]
          {\cal O}(r^{2/3}) &  {\cal O}(1)
         \end{bmatrix}  $ as $r\to 0$.  So the truncation introduced in Section \ref{sec trunc} was not needed in \cite{BG02}.            
\end{remark}

\section{Singularity at $(0,0)$} \label{sec sing}

\subsection{The process $\{(x_t,y_t,\phi_t): t \ge 0\}$} \label{sec process} Recall that the Hamiltonian frame given by \eqref{change} is undefined at $(x,y) = (0,0)$, and so the process $\{(x_t,y_t,\phi_t): t \ge 0\}$ is well-defined only at times $t$ when $(x_t,y_t) \neq (0,0)$.  
  In the setting of \cite{BG02} with multiplicative noise the point $(0,0)$ was fixed and the set $\R^2 \setminus\{(0,0)\}$ was invariant for the process $\{(x_t,y_t): t \ge 0\}$.  Therefore $\{(x_t,y_t,\phi_t): t\ge 0\}$ was a well-defined diffusion process on $(\R^2 \setminus\{(0,0)\}) \times \R/(2 \pi \Z)$ and the tools of ergodic theory were available.  But with additive noise the point $(0,0)$ is no longer fixed.  We will not investigate the issue of the attainability of $(0,0)$ for the process $\{(x_t,y_t): t \ge 0\}$.  We will not attempt to apply ergodic theory to $\{(x_t,y_t,\phi_t): t\ge 0\}$.  Instead we will use ergodic properties of the underlying process $\{(x_t,y_t): t \ge 0\}$ on $\R^2$ in Lemma \ref{lem ergod} below  to obtain bounds on limiting time averages of Lebesgue and It\^{o} integrals involving integrands of the form $F(x_t,y_t,\phi_t)$ with the property that $F(x,y,\phi) = 0$ whenever $(x,y) \neq (0,0)$.    Recall $\mu$ is the invariant probability measure for the system \eqref{add eps} in $\R^2$, and it does not depend on $\e > 0$.

\begin{lemma} \label{lem ergod}  (i)  Suppose $g: \R^2 \to \R$ is in $L^1(\mu)$.  Then for all $\e > 0$ and all $(x_0,y_0) \in \R^2$
   $$
  \lim_{n \to \infty} \frac{1}{n} g(x_n,y_n) \to 0 \quad \mbox{almost surely}.
   $$

(ii) Suppose the process $\{X_t: t \ge 0\}$ satisfies $|X_t| \le g(x_t,y_t)$ where $g: \R^2 \to [0,\infty)$ is in $L^1(\mu)$.  Then for all $\e > 0$ and all $(x_0,y_0,v_0)$ with $v_0 \neq 0$
 $$
   \limsup_{t \to \infty} \frac{1}{t}\left|  \int_0^t X_s ds \right| \le  \limsup_{t \to \infty} \frac{1}{t} \int_0^t g(x_s,y_s) ds  =  \int_{\R^2} g(x,y)d\mu(x,y) <\infty
   $$    
almost surely.

(iii)  Suppose the process $\{X_t: t \ge 0\}$ is non-anticipating and satisfies $|X_t| \le g(x_t,y_t)$ where $g: \R^2 \to [0,\infty)$ is in $L^2(\mu)$.  Then for all $\e > 0$ and all $(x_0,y_0,v_0)$ with $v_0 \neq 0$
   $$
      \lim_{t \to \infty} \frac{1}{t} \int_0^t X_s dW_s = 0
      $$
 almost surely. 
 \end{lemma}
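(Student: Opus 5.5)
The plan is to derive all three parts from the ergodic theorem for the positive recurrent diffusion $\{(x_t,y_t)\}$ on $\R^2$ with invariant probability $\mu$. The key point is that the ergodic theorem must hold for \emph{every} starting point, not just $\mu$-almost every one. This follows exactly as in the proof of Proposition \ref{prop FK cart}: by H\"ormander's condition (Lemma \ref{lem Hor} restricted to the $(x,y)$ component, or directly since $[V_1,\overline{\nabla}H]$ together with $V_1$ spans), the time-1 transition probability $P_1((x_0,y_0),\cdot)$ is absolutely continuous with respect to $\mu$. Hence any almost sure statement valid for $\mu$-a.e. initial point, and shift invariant, holds for all initial points. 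The measure $\mu$ does not depend on $\e$, so everything is uniform in that sense.

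\textbf{Part (i).} Use the discrete skeleton $(x_n,y_n)_{n\ge0}$, which is stationary under $\mathbb{P}_\mu$. Since $g\in L^1(\mu)$, we have $\sum_n \mathbb{P}_\mu(|g(x_n,y_n)|>\delta n) = \sum_n \mu(|g|>\delta n) \le \delta^{-1}\|g\|_{L^1(\mu)}<\infty$. Borel--Cantelli then gives $g(x_n,y_n)/n\to 0$ $\mathbb{P}_\mu$-a.s. The event is a tail event of the chain, determined by $(x_n,y_n)_{n\ge 1}$. Conditioning on time $1$ and using $P_1((x_0,y_0),\cdot)\ll\mu$ transfers the conclusion to every initial $(x_0,y_0)$.

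\textbf{Part (ii).} The first inequality is immediate from $|X_s|\le g(x_s,y_s)$. The equality is Birkhoff's ergodic theorem for the continuous-time process with $g\in L^1(\mu)$, using uniqueness of $\mu$ for ergodicity. This gives a $\mu$-a.e. limit, which is again extended to all starting points via absolute continuity of $P_1$. Since $v_0$ and the auxiliary variables enter only through $X$, the bound holds for all $(x_0,y_0,v_0)$.

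\textbf{Part (iii).} Set $M_t=\int_0^t X_s\,dW_s$, a continuous local martingale with $\langle M\rangle_t=\int_0^t X_s^2\,ds\le \int_0^t g^2(x_s,y_s)\,ds$. By part (ii) applied to $g^2\in L^1(\mu)$, we get $\limsup \langle M\rangle_t/t\le \int g^2\,d\mu<\infty$ a.s. Two cases then finish the argument:
\begin{itemize}
\item on $\{\langle M\rangle_\infty<\infty\}$, $M_t$ converges, so $M_t/t\to0$;
\item on $\{\langle M\rangle_\infty=\infty\}$, the strong law for martingales (Dambis--Dubins--Schwarz time change to a Brownian motion $B$) gives $M_t/\langle M\rangle_t=B_{\langle M\rangle_t}/\langle M\rangle_t\to 0$. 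Then $M_t/t=(M_t/\langle M\rangle_t)(\langle M\rangle_t/t)\to 0$ because the second factor is bounded.
\end{itemize}

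The main obstacle is the extension from $\mu$-almost every initial condition to every initial condition, particularly in (i), where the event is not a time average. I would handle this by verifying that each event in question is invariant under the time-1 shift, up to null sets, and is therefore determined after time 1. Absolute continuity of $P_1((x_0,y_0),\cdot)$ with respect to $\mu$, which comes from hypoellipticity of the $(x,y)$ generator together with positivity from controllability, then applies.
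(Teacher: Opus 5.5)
Your proposal is essentially the paper's proof. Both use Borel--Cantelli for the stationary skeleton plus $P_1((x_0,y_0),\cdot)\ll\mu$ for (i), the ergodic theorem plus the same transfer for (ii), and (ii) applied to $g^2$ followed by the strong law for continuous local martingales for (iii); your Dambis--Dubins--Schwarz argument just spells out what the paper cites from \cite{CE23} and \cite{KarShr}.

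One caveat applies equally to the paper's proof. In (ii) the limit event is invariant under the time-$1$ shift only up to the event $\{\int_0^1 g(x_s,y_s)\,ds=\infty\}$. That event is null for the locally bounded $g$ to which the lemma is actually applied, but it need not be null for every $g\in L^1(\mu)$ and every starting point; for instance, $g(x,y)=\|(x,y)-(x_0,y_0)\|^{-\alpha}$ with $3/2\le\alpha<2$, started at $(x_0,y_0)$, gives $\int_0^1 g(x_s,y_s)\,ds=\infty$ almost surely. So strictly the transfer should be made for $\frac{1}{t}\int_1^t g(x_s,y_s)\,ds$, or $g$ should be assumed locally bounded.
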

 
\proof (i) The Borel-Cantelli lemma implies $(1/n)g(x_n,y_n)\to 0$ almost surely for $\mu$ almost all $(x_0,y_0)$.  Since the time 1 transition probability $P_1((x_0,y_0), \cdot)$ is absolutely continuous with respect to $\mu$, it follows that $(1/n)g(x_n,y_n)\to 0$ almost surely for all $(x_0,y_0) \in\R^2$. 
 
 (ii) The first inequality is deterministic, and the second uses the ergodic theorem together the absolute continuity of the time 1 transition probability. 
 
 (iii)  Applying (ii) to $(X_t)^2$ gives
    $$
      \limsup_{t\to\infty} \frac{1}{t} \int_0^t (X_s)^2 ds \le \int_{\R^2} \bigl(g(x,y)\bigr)^2 d\mu(x,y) <\infty
      $$
almost surely, and hence (see for example  \cite[Lemma 3.14]{CE23}, also \cite[Theorem 3.4.6]{KarShr})
    $$
    \lim_{t\to\infty} \frac{1}{t} \int_0^t X_s dW_s = 0
    $$    
almost surely.   \endproof
 
\begin{remark} \label{rem disc} To obtain the continuous time limit $\lim_{t \to \infty} \frac{1}{t}g(x_t,y_t)$ in Lemma \ref{lem ergod}(i) we would need  a stronger integrability condition, see \cite[Prop 4.1.3]{Arn98}.  
 For the purposes of this paper, it is sufficient to have the discrete time limit here and in Proposition \ref{prop equiv} and Theorem \ref{thm QG}.

\end{remark}  

\subsection{Truncation}  \label{sec trunc}

The standard method to obtain the Furstenberg-Khas'minskii formula for $\lim_{t \to \infty} \frac{1}{t}\log \|\widetilde{w}_t\|$ involves integrating the functions $Q_0, Q_1, \ldots, Q_7$ which appear in \eqref{log w}.  In Lemma \ref{lem Qt} we see that several of these functions blow up as $r \to 0$, and so we introduce a truncation.  In order to obtain the desired asymptotics as $\e \to 0$, the truncation has to be carried out in an $\e$-dependent neighborhood of $(0,0)$.

Choose and fix a $C^2$ function $\chi:\R \to [0,1]$ such that 
  \begin{equation} \label{chi}
  \chi(h) = \begin{cases} 0 & \mbox{ if } h \le 1 \\
                     1 & \mbox{ if } h \ge 2.
                     \end{cases} 
      \end{equation}
         There are constants $c_1$ and $c_2$ such that $|\chi'(h)| \le c_1$ and $|\chi''(h)| \le c_2$ for all $h$.  For $\e > 0$ define $\chi_\e(h) = \chi(h/\e )$, and note that $|\chi_\e'(h)| \le c_1/\e$ and $|\chi_\e''(h)| \le c_2/\e^2$. 

\s 

For the system \eqref{add eps} the distribution of the process $\{(x_t,y_t): t \ge 0\}$ depends on $\e$, but the invariant probability measure $\mu$ does not depend on $\e$.  So if $g(x,y)$ does not depend on $\e$ then the integral $\int_{\R^2}g(x,y)d\mu(x,y)$ does not depend on $\e$.
The following lemma relates the truncation parameter $\e$ with the rate of blow-up of $g(x,y)$ as $r  = \sqrt{x^2+y^2}  \to 0$.  It is designed to take advantage of the fact, seen in the estimates \eqref{Qt growth} for the $Q_i$ and $t_i$, that functions associated with higher powers of $\e$ typically have worse blow-up as $r \to 0$.   Recall $h = H(x,y) \sim r^2/2$ as $r \to 0$. 

\begin{lemma} \label{lem gint} Suppose $g: \R^2 \setminus \{(0,0)\} \to [0,\infty)$ satisfies  $\int_{r > \delta} g(x,y)d\mu(x,y) < \infty$ for each fixed $\delta >0$ and $g(x,y) = {\cal O}(r^k)$ as $r \to 0$.  Then   
  \begin{equation} \label{gint}
    \int_{\R^2} \chi_\e(h) g(x,y) d\mu(x,y) \begin{cases}
          \to \int_{\R^2}g(x,y) d\mu(x,y)  & \mbox{ if } k > -2, \\
          = {\cal O}(\log 1/\e) & \mbox{ if } k = -2, \\
          = {\cal O}(\e^{(k+2)/2}) & \mbox{ if } k <-2
          \end{cases}
  \end{equation}  and
   \begin{equation} \label{gint2}
    \int_{\e \le h \le 2\e} g(x,y) d\mu(x,y)  = \begin{cases} {\cal O}(\e^{(k+2)/2}) & \mbox{ if } k \neq -2, \\
                                                              {\cal O}(\log 1/\e)  & \mbox{ if } k = -2                     
                                        \end{cases}
                                         \end{equation}    
as $\e \to 0$. 
\end{lemma}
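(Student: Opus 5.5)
Since $\mu$ has the smooth, strictly positive density $\rho$ of \eqref{density}, which is bounded above and below by positive constants on any bounded neighbourhood of $(0,0)$, the whole question reduces to elementary integrals in polar coordinates near the origin. Fix $\delta_0>0$ small enough that on $\{r\le 2\delta_0\}$ we have $g(x,y)\le C r^k$, $\rho\le C'$, and $h=H(x,y)$ comparable to $r^2$; more precisely, $c r^2\le h\le C'' r^2$ there, using $h = r^2/2 + \mathcal{O}(r^4)$. Split $\R^2$ into the inner region $\{r<\delta_0\}$ and the outer region $\{r\ge\delta_0\}$.

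First I prove \eqref{gint2}. For $\e$ small the set $\{\e\le h\le 2\e\}$ lies inside the annulus $\{a\e^{1/2}\le r\le b\e^{1/2}\}$ for constants $0<a<b$, and in particular inside the inner region. On this set
\[
\int_{\e \le h\le 2\e} g\,d\mu \le CC'\int_0^{2\pi}\int_{a\e^{1/2}}^{b\e^{1/2}} r^{k}\,r\,dr\,d\theta = \mathcal{O}\bigl(\e^{(k+2)/2}\bigr)
\]
for $k\ne -2$, by direct integration of $r^{k+1}$. For $k=-2$ the same integral equals $2\pi\log(b/a)=\mathcal{O}(1)$, which is a fortiori $\mathcal{O}(\log 1/\e)$.

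Next, \eqref{gint}. Since $0\le\chi_\e\le 1$ and $\chi_\e(h)=0$ for $h\le \e$, on the outer region the integrand is dominated by $g$, which is $\mu$-integrable on $\{r\ge \delta_0\}$ by hypothesis; this part is therefore $\mathcal{O}(1)$. It also converges to $\int_{r\ge\delta_0} g\,d\mu$ by dominated convergence, because $\chi_\e(h)\to 1$ pointwise for $h>0$. On the inner region the integrand is supported where $h\ge\e$, hence where $r\ge a\e^{1/2}$, and is bounded by $CC' r^k$. The inner contribution is thus at most
\[
2\pi CC'\int_{a\e^{1/2}}^{\delta_0} r^{k+1}\,dr.
\]
This integral is bounded uniformly in $\e$ when $k>-2$, is $\mathcal{O}(\log 1/\e)$ when $k=-2$, and is $\mathcal{O}(\e^{(k+2)/2})$ when $k<-2$. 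Adding the $\mathcal{O}(1)$ outer term gives the second and third cases; the third uses $\e^{(k+2)/2}\to\infty$, so that the $\mathcal{O}(1)$ term is absorbed.

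In the case $k>-2$ I claim convergence rather than a bound. Here $r^{k+1}$ is integrable near $0$, so $g\in L^1(\mu)$ globally. Since $\chi_\e(h)g\le g$ and $\chi_\e(h)\to 1$ for every $(x,y)\neq(0,0)$, dominated convergence gives convergence to $\int_{\R^2} g\,d\mu$; the single point $(0,0)$ is $\mu$-null.

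The only step needing care is the comparison between $h$ and $r^2$ near the origin, which fixes the annulus radii as constant multiples of $\e^{1/2}$. This follows directly from $H(x,y)=r^2/2+x^4/4$.
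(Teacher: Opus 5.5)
Your proof is correct and follows the paper's approach: the paper only notes that the density $\rho$ of $\mu$ tends to a positive constant at $(0,0)$. You carry out the polar-coordinate computation this implies, using $r^2/2 \le h \le r^2$ near the origin to confine $\{\varepsilon\le h\le 2\varepsilon\}$ to an annulus of radii comparable to $\varepsilon^{1/2}$, and you use dominated convergence for the case $k>-2$.
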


\proof  This follows easily from the fact that the density $\rho(x,y)$ of $\mu$ satisfies $\rho(x,y)\to C > 0$ as $(x,y) \to (0,0)$, see \eqref{density}. \endproof

\subsection{Equivalence of norms}
The heuristic argument in Section \ref{sec heur} suggests to study $\|\widetilde{w}_t\|$ in place of $\|v_t\|$.   The following definition, which avoids the singularity in $\|\widetilde{w_t}\|$ when $(x_t,y_t)$ is close to $(0,0)$, was motivated by a similar construction in Chemnitz and Engel \cite{CE23}.  For $\e > 0$ and $t \ge 0$ define
   \begin{equation} \label{Lam}
   \Lambda_\e(t) = \bigl(1-\chi_\e(H(x_t,y_t))\bigr)\log\|v_t\| + \chi_\e(H(x_t,y_t)) \log \|\widetilde{w}_t\|.
   \end{equation}

\begin{proposition} \label{prop equiv} For the system \eqref{add eps}, for all $\e > 0$ and all $(x_0,y_0,v_0)$ with $v_0 \neq 0$ 
    $$
        \limsup_{n \to \infty} \frac{1}{n}\Big|\log \|v_n\| -  \Lambda_\e(n)\Big| =0
    $$
almost surely.
  \end{proposition}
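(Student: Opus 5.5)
Since $\Lambda_\e(t) - \log\|v_t\| = \chi_\e(H(x_t,y_t))\bigl(\log\|\widetilde{w}_t\| - \log\|v_t\|\bigr)$ and $0\le \chi_\e \le 1$ with $\chi_\e = 0$ when $H \le \e$, it suffices to find a deterministic function $g_\e:\R^2\to[0,\infty)$ in $L^1(\mu)$ with
\[
\chi_\e(H(x,y))\,\bigl|\log\|\widetilde{w}\| - \log\|v\|\bigr| \le g_\e(x,y)
\]
whenever $v$ and $\widetilde{w}$ are related at the point $(x,y)$ by \eqref{change} and $\widetilde{w} = Tw$. Lemma \ref{lem ergod}(i) applied to $g_\e$ then gives $\frac1n g_\e(x_n,y_n)\to 0$ almost surely for every starting point, which is exactly the claim. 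Note that $\e$ is fixed throughout, so all constants may depend on $\e$.

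For the bound, observe that $\widetilde{w} = T A(x,y)^{-1} v$, where $A(x,y)$ is the matrix with columns $\Gamma(H(x,y))U_1(x,y)$ and $U_2(x,y)$. Hence
\[
\bigl|\log\|\widetilde{w}\|-\log\|v\|\bigr| \le \log \max\bigl(\|TA^{-1}\|,\ \|AT^{-1}\|\bigr).
\]
Since $U_1\perp U_2$, with $\|U_1\| = \|\nabla H\|$ and $\|U_2\| = 1/\|\nabla H\|$, the operator norms are explicit:
\[
\|A\| = \max\bigl(\Gamma\|\nabla H\|,\ \|\nabla H\|^{-1}\bigr), \qquad \|A^{-1}\| = \max\bigl((\Gamma\|\nabla H\|)^{-1},\ \|\nabla H\|\bigr).
\]
Also $\|T\|,\|T^{-1}\|\le \e^{-2/3}$. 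Therefore
\[
\bigl|\log\|\widetilde{w}\|-\log\|v\|\bigr| \le \tfrac23\log(1/\e) + \bigl|\log \Gamma(H)\bigr| + 2\bigl|\log\|\nabla H\|\bigr|,
\]
up to an additive constant.

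On the region $H\ge \e$ we have $r$ bounded below by a constant depending on $\e$, and $\nabla H = (x+x^3,\,y)$ vanishes only at the origin. So $\log\|\nabla H\|$ is bounded below there and grows like $3\log r$ at infinity. By Lemma \ref{lem GG}(i) together with continuity and positivity of $\Gamma$ on $(0,\infty)$, $|\log\Gamma(H)|$ is bounded on $\{\e\le H\le 1\}$. By Lemma \ref{lem GG}(i) at infinity it is ${\cal O}(\log H)$ as $H\to\infty$. Thus $g_\e(x,y) = C_\e\bigl(1+\log(1+x^2+y^2)\bigr)$ works, and it lies in $L^1(\mu)$ because the density \eqref{density} has Gaussian-type decay.

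The only points needing care are positivity and continuity of $\Gamma$ on $(0,\infty)$, which follow from $G_1>0$ and $G_2>0$ (Lemma \ref{lem elliptic}), and the fact that the frame is used only where $\chi_\e>0$, which keeps us away from the singularity at the origin. I expect no real obstacle here. The discrete-time restriction is forced by Lemma \ref{lem ergod}(i), as noted in Remark \ref{rem disc}.
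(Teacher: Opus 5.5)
Your proposal is correct and is essentially the paper's proof: you bound $\bigl|\log\|\widetilde{w}_t\|-\log\|v_t\|\bigr|$ through the orthogonal frame $\Gamma U_1, U_2$ by $\frac23|\log\e| + |\log\Gamma(H)|$ plus a multiple of $|\log\|\nabla H\||$, observe that this bound times $\chi_\e(H)$ is $\mu$-integrable, and apply Lemma \ref{lem ergod}(i) at integer times. The only slip is cosmetic: $\|T\|\le\e^{-2/3}$ holds only for $\e\le 1$, but since $\e$ is fixed this only changes the constant $C_\e$.
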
  
  
\proof  From \eqref{change} we have 
    \begin{align*}
    v_t  & =  w_{1,t}\Gamma(H(x_t,y_t))U_1(x_t,y_t) + w_{2.t}U_2(x_t,y_t)\\
        & = \widetilde{w}_{1,t}\e^{-2/3}\Gamma(H(x_t,y_t))U_1(x_t,y_t) + \widetilde{w}_{2.t}U_2(x_t,y_t)
    \end{align*}
so that 
  $$
  \|v_t\| \le \max\Bigl(\e^{-2/3}\Gamma(H(x_t,y_t))\|\nabla H(x_t,y_t)\|, \|\nabla H(x_t,y_t)\|^{-1}\Bigr)\|\widetilde{w}_t\|, 
     $$ 
and similarly
  $$
   \|\widetilde{w}_t\| \le \max\Bigl(\e^{2/3}(\Gamma(H(x_t,y_t)))^{-1}\|\nabla H(x_t,y_t)\|^{-1}, \|\nabla H(x_t,y_t)\|\Bigr)\|v_t\|
 $$
It follows that 
   \begin{equation} \label{equiv}
  \Big|\log \|\widetilde{w}_t\| - \log\|v_t\|  \Big| \le \frac{2}{3}|\log \e|  + \Big| \log \Gamma(H(x_t,y_t)) \Big|+ \Big|\log \|\nabla H(x_t,y_t)\|\Big|,
 \end{equation}    
and so
  \begin{align*} \Bigl| \Lambda_\e(t) - \log\|v_t\|\Bigr| 
   & \le \chi_\e(h_t)\Bigl| \log \| \widetilde{w}_t\| - \log \|v_t\| \Bigr|\\
   & \le \chi_\e(h_t)\left(\frac{2}{3}|\log \e| + \Big|\log \|\nabla H(x_t,y_t)\|\Big| + \Big| \log \Gamma(H(x_t,y_t)) \Big|\right).
   \end{align*}  
Using the exact formula for $\|\nabla H(x,y)\|$ and the asymptotics in Lemma \ref{lem GG}(i) for $\Gamma(h)$, we see the right side is integrable with respect to the stationary measure $\mu$, and so
  \begin{align*}
  \lim_{n \to\infty} \frac{1}{n} \Bigl| \Lambda_\e(n) - \log\|v_n\|\Bigr| & \le \lim_{n \to\infty} \frac{1}{n}\chi_\e(h_n)\left(\frac{2}{3}|\log \e| + \Big|\log \|\nabla H(x_n,y_n)\|\Big| + \Big| \log \Gamma(H(x_n,y_n)) \Big|\right)\\
  & = 0
  \end{align*}
almost surely by Lemma \ref{lem ergod}. \endproof

\section{Truncated Furstenberg-Khas'minskii formula} \label{sec FK Ham tr}

For the following result note that $(x_0,y_0,v_0)$ with $v_0 \neq 0$ determines the law of the restriction of $\{(x_t,y_t,\phi_t): t \ge 0\}$ to the set of times when $(x_t,y_t) \neq (0,0)$ and hence determines the laws of $\{\Lambda_\e(t): t \ge 0\}$ and $\{\chi_\e(h_t)Q_0(x_t,y_t,\phi_t): t \ge 0\}$.  Recall the formulas \eqref{Lam} for $\Lambda_\e(t)$ and \eqref{Q} for $Q_0(x,y,\phi)$.

\begin{theorem}\label{thm lamQ}  For the system \eqref{add eps} there exists $K_1$ such that for all $0 < \e \le 1$ and $(x_0,y_0,v_0)$ with $v_0 \neq 0$  
  $$
  \limsup_{t \to \infty} \left| \frac{1}{t} \Lambda_\e(t) - \frac{\e^{2/3}}{t} \int_0^t \chi_\e(h_s)Q_0(x_s,y_s,\phi_s)ds \right|  \le K_1 \e^{4/3}
  $$
almost surely. 
 \end{theorem}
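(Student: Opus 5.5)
We apply It\^{o}'s formula to $\Lambda_\e(t)$, write the result as a sum of Lebesgue and It\^{o} integrals whose integrands are either supported where $\e \le h_t \le 2\e$ or carry the factor $\chi_\e(h_t)$, and then bound each limiting time average using Lemma \ref{lem ergod} together with the integrability estimates of Lemma \ref{lem gint}. The factor $\chi_\e$ kills all integrands in the region $h < \e$, so only the region where Lemmas \ref{lem dweps} and \ref{lem phi logw} are valid matters; Remark \ref{rem frame} justifies using them there.

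\textbf{Step 1: decomposition.} Writing $h_t = H(x_t,y_t)$, we have $dh_t = \e^2 (V_0.H)\,dt + \frac{\e^2}{2}V_1.(V_1.H)\,dt + \e(V_1.H)\,dW_t$. For the system \eqref{add eps} this gives $dh_t = \e^2(-\beta y_t^2+\sigma^2/2)\,dt + \e\sigma y_t\,dW_t$. Then
\[
d\Lambda_\e(t) = (1-\chi_\e(h_t))\,d\log\|v_t\| + \chi_\e(h_t)\,d\log\|\widetilde{w}_t\| + \bigl(\log\|\widetilde{w}_t\| - \log\|v_t\|\bigr)\,d\chi_\e(h_t) + d\langle \chi_\e(h),\log\|\widetilde{w}\|\rangle_t .
\]
The first term is $(1-\chi_\e)\widetilde{Q}_\e\,dt$ by \eqref{log v}; it has no martingale part since $\log\|v_t\|$ is of bounded variation. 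The second term is $\chi_\e$ times \eqref{log w}, and its $\e^{2/3}Q_0$ part is exactly the main term in the statement.

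\textbf{Step 2: bounding the error terms.} There are four kinds of error terms, each estimated through Lemmas \ref{lem ergod} and \ref{lem gint}.

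\emph{(a) The cutoff region.} We have $(1-\chi_\e)|\widetilde{Q}_\e| \le C(x^2+\e^2)$ on $\{h\le 2\e\}$, so its $\mu$-average is ${\cal O}(\e^2)$ by \eqref{gint2} with $k=0$ and $k=2$.

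\emph{(b) Higher-order drift terms.} We need $\chi_\e \e^{2j/3}Q_j$ for $j=1,\dots,4$, with $Q_j = {\cal O}(r^{2/3-4j/3})$ by Lemma \ref{lem Qt}. By \eqref{gint}:
\begin{itemize}
\item $j=1$: $k=-2/3>-2$, so the average is ${\cal O}(1)$ and the term contributes ${\cal O}(\e^{4/3})$.
\item $j=2$: $k=-2$, giving $\e^2\log(1/\e)$.
\item $j=3$: $\e^{8/3}\e^{(-10/3+2)/2} = \e^2$.
\item $j=4$: $\e^{10/3}\e^{-4/3}=\e^2$.
\end{itemize}
All of these are ${\cal O}(\e^{4/3})$ since $\e\le1$. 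Polynomial growth at infinity is harmless because $\mu$ has Gaussian tails.

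\emph{(c) Stochastic integrals.} These are the $\chi_\e Q_{5,6,7}\,dW$ terms and $(\log\|\widetilde{w}\|-\log\|v\|)\chi_\e'(h)\e\sigma y\,dW$. The integrands are bounded by functions $g$ that are square integrable for each fixed $\e$: the $Q_i$ are bounded on $\{h\ge\e\}$, and \eqref{equiv} bounds the log difference by a $\mu$-integrable function there. Hence Lemma \ref{lem ergod}(iii) gives a limit of exactly $0$.

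\emph{(d) Terms from differentiating the cutoff.} The remaining drift terms come from $d\chi_\e$ and from the cross-variation. They are
\[
\bigl(\log\|\widetilde{w}\|-\log\|v\|\bigr)\Bigl[\chi_\e'(h)\,\e^2(\sigma^2/2-\beta y^2) + \tfrac{1}{2}\chi_\e''(h)\,\e^2\sigma^2y^2\Bigr]
\]
and $\chi_\e'(h)\,\e\sigma y\,\bigl(\e^{1/3}Q_5 + \e Q_6 + \e^{5/3}Q_7\bigr)$. All are supported in $\{\e\le h\le 2\e\}$.
\begin{itemize}
\item Using $|\chi_\e'|\le c_1/\e$, $|\chi_\e''|\le c_2/\e^2$, $y^2={\cal O}(\e)$ there, the log difference ${\cal O}(\log 1/\e)$ by \eqref{equiv}, and $\mu(\e\le h\le2\e)={\cal O}(\e)$, the first bracket contributes ${\cal O}(\e\cdot\e\log(1/\e)) = {\cal O}(\e^2\log 1/\e)$.
\item The cross-variation term is bounded by $\e^{-1}\cdot\e\cdot r\cdot(\e^{1/3}r^{1/3}+\e r^{-1}+\e^{5/3}r^{-7/3})$ with $r\sim\e^{1/2}$, times mass $\e$. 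This is ${\cal O}(\e^{2})$ at worst.
\end{itemize}

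\textbf{Step 3: conclusion and main obstacle.} Dividing by $t$ and using Lemma \ref{lem ergod}(ii) (applied along all real $t$, since the bounds are on integrals and not pointwise values) gives the stated $\limsup$ bound, with a constant $K_1$ uniform in $0<\e\le1$. The main obstacle is making the estimates in Step 2(d) and the use of \eqref{equiv} uniform. In particular, $|\log\Gamma(h)|$ and $|\log\|\nabla H\||$ are ${\cal O}(\log1/\e)$ on the shell, and the $\e^{2}\log(1/\e)$ losses must be absorbed into $\e^{4/3}$. The other point needing care is checking that every $(\log\|\widetilde{w}\|-\log\|v\|)$ factor is only ever paired with $\chi_\e'$ or $\chi_\e''$, so that it appears only on the shell and never multiplies a $Q_i$ off it.
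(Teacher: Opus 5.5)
Your proposal is correct and follows essentially the same route as the paper: It\^{o}'s formula for $\Lambda_\e(t)$ gives the same decomposition into the $\widetilde{Q}_\e$ cutoff term, the $Q_1,\dots,Q_4$ drifts, the stochastic integrals, the $d\chi_\e$ drift and the cross-variation term. Each is bounded via Lemmas \ref{lem ergod} and \ref{lem gint}, with the dominant error ${\cal O}(\e^{4/3})$ coming from $Q_1$. There are two harmless slips: the $Q_i$ are not bounded on $\{h\ge\e\}$ but only of polynomial growth at infinity, which still suffices for $L^2(\mu)$; and in 2(a) the region is the ball $\{h\le 2\e\}$ rather than the shell of \eqref{gint2}, although the ${\cal O}(\e^2)$ bound there follows directly from $x^2\le 4\e$ and $\mu(h\le 2\e)={\cal O}(\e)$.
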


\proof
 Write $H(x_t,y_t) = h_t$.  From \eqref{sde} we have 
  \begin{align*}
  dh_t & =  \e^2 V_0.H(x_t,y_t)dt+ \e (V_1 . H)(x_t,y_t)\circ dW_t \\
    & = \e^2 \overline{\cal L}H(x_t,y_t)dt + \e (V_1 . H)(x_t,y_t) dW_t
  \end{align*}
where 
   \begin{equation} \label{overL}
   \overline{\cal L}f(x,y)= V_0.f(x,y) + \frac{1}{2}V_1^2.f(x.y)
   \end{equation}
and so
   \begin{align} \nonumber 
     d \chi_\e(h_t) & =  \chi_\e'(h_t)dh_t+ \frac{1}{2}\chi_\e''(h_t)(dh_t)^2\\
         & = \e^2\left(\overline{\cal L}H(x_t,y_t)\chi_\e'(h_t) + \frac{1}{2}\bigl((V_1.H)(x_t,y_t)\bigr)^2\chi_\e''(h_t) \right)dt + \e  (V_1.H)(x_t,y_t) \chi'_\e(h_t) dW_t \label{dchi}
         \end{align}             
Applying \eqref{log v}, \eqref{log w} and \eqref{dchi} to the formula \eqref{Lam} for $\Lambda_\e(t)$ we get 
   \begin{align}
     d\Lambda_\e(t)
     & = \Bigl(1-\chi_\e(h_t)\Bigr)d \log\|v_t\| + \chi_\e(h_t)d \log \|\widetilde{w}_t\| \nonumber \\
     & \quad + \Bigl(\log \|\widetilde{w}_t\| - \log \|v_t\|\Bigr)d\chi_\e(h_t) + \Bigl(d \log \|\widetilde{w}_t\|-d \log \|v_t\| \Bigr)d\chi_\e(h_t)  \nonumber  \\
     & = \Bigl(1-\chi_\e(h_t)\Bigr)\widetilde{Q}_\e(x_t,y_t,\theta_t)dt  \nonumber \\
     & \quad + \chi_\e(h_t)\Bigl(\varepsilon^{2/3}Q_0(x_t,y_t,\phi_t)
         + \varepsilon^{4/3}Q_1(x_t,y_t,\phi_t)
         +\varepsilon^{2}Q_2(x_t,y_t,\phi_t)\Bigr.  \nonumber  \\
    &  \hspace{10ex}   +\Bigl. \varepsilon^{8/3}Q_3(x_t,y_t,\phi_t)
         +\varepsilon^{10/3}Q_4(x_t,y_t,\phi_t) \Bigr) dt  \nonumber  \\
    &  \quad + \chi_\e(h_t)\Bigl(\varepsilon^{1/3}  Q_5(x_t,y_t,\phi_t)
            + \varepsilon Q_6(x_t,y_t,\phi_t)
             + \varepsilon^{5/3}  Q_7(x_t,y_t,\phi_t)\Bigr) dW_t  \nonumber \\
     & \quad + \Bigl(\log \|\widetilde{w}_t\| - \log \|v_t\|\Bigr)\e^2\Bigl(\overline{\cal L}H(x_t,y_t)\chi_\e'(h_t) + \frac{1}{2}\bigl((V_1.H)(x_t,y_t)\bigr)^2\chi_\e''(h_t) \Bigr)dt  \nonumber \\
     & \quad  +\Bigl(\log \|\widetilde{w}_t\| - \log \|v_t\|\Bigr)  \e (V_1.H)(x_t,y_t)\chi_\e'(h_t) dW_t  \nonumber \\
     & \quad + \Bigl(\varepsilon^{1/3}  Q_5(x_t,y_t,\phi_t)
            + \varepsilon Q_6(x_t,y_t,\phi_t)
             + \varepsilon^{5/3}  Q_7(x_t,y_t,\phi_t)\Bigr)\e (V_1.H)(x_t,y_t) \chi'_\e(h_t)dt. \label{dLam}
 \end{align}
Integrate from $0$ to $t$ and divide by $t$. On the left we have one term
    $$
    \frac{1}{t}(\Lambda_\e(t) - \Lambda_\e(0)).
    $$
On the right we pick out the term
     $$
     \e^{2/3} \frac{1}{t} \int_0^t \chi_\e(h_s)Q_0(x_s,y_s,\phi_s)ds.
     $$
All the other terms are of the form
    $$
    \frac{1}{t} \int_0^t X_s ds \quad \mbox{ or } \quad \frac{1}{t} \int_0^t X_s dW_s
    $$
and we carry out a term by term analysis of the these remaining terms as $t \to \infty$.  

(i) The term involving $\widetilde{Q}_\e$.  If $1-\chi_\e(h) \neq 0$ then $x^2+y^2 \le 2h \le 4\e $, and so from \eqref{Q tilde} we have $|\widetilde{Q}_\e(x,y,\theta)| \le 3x^2/2+ \e^2 \beta \le 6 \e + \e^2 \beta$.  Then 
   \begin{align*}
  \limsup_{t \to \infty}\frac{1}{t}\left| \int_0^t \bigl(1-\chi_\e(h_s)\bigr)\widetilde{Q}_\e(x_s,y_s,\theta_s)ds \right|
   & \le  \limsup_{t \to \infty} \frac{1}{t}\int_0^t \bigl(1-\chi_\e(h_s)\bigr)(6\e +\e^2 \beta)ds \\
     & = \int_{\R^2} \bigl(1-\chi_\e(h)\bigr)(6 \e + \e^2 \beta) d\mu(x,y)\\
         & \le (6\e +\e^2 \beta) \mu(h \le 2\e)\\
         &  = {\cal O}(\e^2),
     \end{align*}  
see \eqref{density}.

(ii) Terms involving $Q_1, \ldots, Q_4$.  Write $\overline{Q}_i(x,y) = \sup\{|Q_i(x,y,\phi)|: \phi \in \R/2\pi\Z\}$. Using the estimates \eqref{Qt growth} together with Lemmas \ref{lem ergod}(ii) and \ref{lem gint}, we get
\begin{align*}
 \lefteqn{ \limsup_{t \to \infty} \frac{1}{t}\left|\int_0^t \chi_\e(h_s) Q_i(x_s,y_s,\phi_s)ds \right|} \hspace{15ex}\\
 & \le \int_{\R^2} \chi_\e(h)\overline{Q}_i(x,y)\mu(x,y) \,\, \begin{cases}
          \to \int_{\R^2}\overline{Q}_1(x,y)d\mu(x,y) <\infty & \mbox{ if } i=1, \\
          = {\cal O}(\log (1/\e)) & \mbox{ if } i=2, \\
          = {\cal O}(\e^{-2/3}) & \mbox{ if } i=3, \\
           ={\cal O}(\e^{-4/3}) & \mbox{ if } i=4 
           \end{cases}
  \end{align*}
as $\e\to 0$.  Therefore we have
  \begin{align*}
  \limsup_{t \to \infty} \frac{1}{t}\left|\int_0^t \chi_\e(h_s) \e^{4/3}Q_1(x_s,y_s,\theta_s)ds \right| & = {\cal O}(\e^{4/3}),\\
  \limsup_{t \to \infty}\frac{1}{t} \left|\int_0^t \chi_\e(h_s) \e^2 Q_2(x_s,y_s,\theta_s)ds \right| & = {\cal O}(\e^2 \log(1/\e)),\\
   \limsup_{t \to \infty} \frac{1}{t}\left|\int_0^t \chi_\e(h_s) \e^{8/3}Q_3(x_s,y_s,\theta_s)ds  \right|& = {\cal O}(\e^2), \\
  \limsup_{t \to \infty}\frac{1}{t} \left|\int_0^t \chi_\e(h_s) \e^{10/3}Q_4(x_s,y_s,\theta_s)ds  \right|& = {\cal O}(\e^2)
  \end{align*}    
almost surely.

(iii) Stochastic integrals involving $Q_5,\ldots,Q_7$.  Each $Q_i$ is continuous on $(\R^2 \setminus \{(0,0\}) \times \R/(2 \pi \Z)$ and has at most polynomial growth as $r \to \infty$.  So for each $\e > 0$ we have $\chi_\e(h) \overline{Q}_i(x,y) \in L^2(\mu)$ for $i = 5,6,7$ and hence by Lemma \ref{lem ergod}(ii)
    $$
    \lim_{t \to \infty}\frac{1}{t} \int_0^t \chi_\e(h_s) Q_i(x_s,y_s,\phi_s)dW_s = 0
  $$
almost surely for $i = 5,6,7$.

(iv) The two terms involving $\log \|\widetilde{w}_t\| - \log \|v_t\|$.  Recall \eqref{equiv}
    $$
      \Bigl|\log \|\widetilde{w}_t\| - \log \|v_t\|\Bigr| \le \frac{2}{3}|\log \e| + \Big|\log \|\nabla H(x_t,y_t)\|\Big| + \Big| \log \Gamma(H(x_t,y_t)) \Big| .   
      $$
For any $h$ such that $\chi_\e'(h) \neq 0$ or $\chi_\e''(h) \neq 0$ we have $\e \le h \le 2\e$.  Recall $0 \le \e \chi_\e'(h) \le c_1$ and $\e^2|\chi_\e''(h)| \le c_2$.  We have an explicit formula for $\|\nabla H(x,y)\|^2 \sim 2h$ as $h \to 0$ and in Lemma \ref{lem GG} we have the asymptotic $\Gamma(h) \sim (3/4\sigma^2)^{1/3} h^{-1/3}$ as $h \to 0$.  It follows that the exist constants $C_1$ and $C_2$ such that 
     $$
     \sup\Bigl\{\frac{2}{3}|\log \e| + \Big|\log \|\nabla H(x,y)\|\Big| + \Big| \log \Gamma(H(x,y) \Big|: \e  \le H(x,y) \le 2\e \Bigr\} \le C_1 |\log \e| + C_2
     $$
for $0 <\e \le 1/2$.   Using Lemma \ref{lem ergod}(ii), and then the estimates $\overline{\cal L}H(x,y) = -\beta y^2+ \sigma^2/2 = {\cal O}(1)$ and $V_1.H(x,y) = \sigma y = {\cal O}(r)$ as $r \to 0$ in \eqref{gint2} of Lemma \ref{lem gint} we have      
  \begin{align*}
  \lefteqn{\limsup_{t \to \infty}\frac{1}{t}\left|\int_0^t\Bigl(\log \|\widetilde{w}_s\| - \log \|v_s\|\Bigr)\e^2\left(\overline{\cal L}H(x_s,y_s)\chi_\e'(h_s) + \frac{1}{2}\bigl((V_1.H)(x_s,y_s)\bigr)^2\chi_\e''(h_s) \right)ds\right|}\hspace{5ex} \\
  & \le \limsup_{t \to\infty}\frac{1}{t}\int_0^t \bigl(C_1 |\log \e|+C_2\bigr){\bf 1}_{[\e,2\e]}(h_s)\left(\e c_1|\overline{\cal L}H(x_s,y_s)|+ \frac{1}{2}c_2 \bigl((V_1.H)(x_s,y_s)\bigr)^2\right)ds\\
   & = (C_1 |\log \e|+C_2)\int_{\e \le h \le 2\e}\left(\e c_1|\overline{\cal L}H(x,y)|+ \frac{1}{2}c_2\bigl((V_1.H)(x,y)\bigr)^2\right)
   d\mu(x,y)\\
   & = (C_1 |\log \e|+C_2)\Bigl( \e {\cal O}(\e) + {\cal O}(\e^2)\Bigr) = {\cal O}(\e^2 \log 1/\e)
  \end{align*}  
as $\e \to 0$.
  Similarly for the stochastic integral involving $\log \|\widetilde{w}_t\| - \log \|v_t\|$ we have 
   $$
   \left|\Bigl(\log \|\widetilde{w}_t\| - \log \|v_t\|\Bigr)  \e V_1.H(x_t,y_t)\chi_\e'(h_t)\right|
   \le (C_1|\log \e| +C_2) {\bf 1}_{[\e,2\e]}(h_s) c_1 \sigma |y_s|
   $$
and ${\bf 1}_{[\e,2\e]}(h)|y| \in L^2(\mu)$.  It follows by Lemma \ref{lem ergod} that for eac $\e > 0$ 
 $$
 \lim_{t \to \infty} \frac{1}{t} \int_0^t \Bigl(\log \|\widetilde{w}_s\| - \log \|v_s\|\Bigr)  \e (V_1.H)(x_s,y_s)\chi_\e'(h_s) dW_s = 0
 $$
almost surely.

(v) The $dt$ integrals involving $Q_5,\ldots,Q_7$.  Recall $|\e \chi'(h)| \le c_1$ for $\e \le h \le 2\e$ and 0 otherwise.  Also $V_1.H(x,y) = \sigma y = {\cal O}(r)$. From \eqref{Qt growth} we have $\overline{Q}_5(x,y) = {\cal O}(r^{1/3})$ and hence $\overline{Q}_5(x,y) |(V_1.H)(x,y)| = {\cal O}(r^{4/3})$ as $r \to 0$.  Using Lemma \ref{lem ergod}(ii) and then \eqref{gint2} of Lemma \ref{lem gint} we get
    \begin{align*}
\lefteqn{\limsup_{t \to \infty} \frac{1}{t}\left|\int_0^t  \Bigl(\e^{1/3}Q_5(x_s,y_s,\phi_s)\Bigr) \e (V_1.H)(x_s,y_s)\chi'_\e(h_s) ds\right|} \hspace{10ex} \\
  & \le \e^{1/3}\limsup_{t \to \infty} \frac{1}{t}\int_0^t  \Bigl(\overline{Q}_5(x_s,y_s)|(V_1.H)(x_s,y_s)|\Bigr) c_1{\bf 1}_{[\e,2\e]}(h_s)ds \\
  & =\e^{1/3}c_1 \int_{\e \le h \le 2\e} \overline{Q}_5(x,y)\big|(V_1.H)(x,y)\big|\mu(x,y)\\
   & =\e^{1/3} {\cal O}(\e^{5/3}) = {\cal O}(\e^2).
     \end{align*} 
Similarly, using $\overline{Q}_6(x,y) = {\cal O}(r^{-1})$ and $\overline{Q}_7(x,y) = {\cal O}(r^{-7/3})$, we get
   $$
   \limsup_{t \to \infty} \frac{1}{t}\left|\int_0^t  \Bigl(\e Q_6(x_s,y_s,\phi_s)\Bigr) \e (V_1.H)(x_s,y_s)\chi'_\e(h_s) ds\right|  \le  \e {\cal O}(\e) = {\cal O}(\e^2)
   $$
and
 $$
  \limsup_{t \to \infty} \frac{1}{t}\left|\int_0^t  \Bigl(\e^{5/3}Q_7(x_s,y_s,\phi_s)\Bigr) \e (V_1.H)(x_s,y_s)\chi'_\e(h_s) ds\right|  \le \e^{5/3} {\cal O}(\e^{1/3}) = {\cal O}(\e^2).
   $$

Together, all the $dW_t$ integrals have $\lim_{t \to \infty} \frac{1}{t}\int_0^t \cdots dW_s = 0$ almost surely and all the $dt$ integrals except the one with $Q_0$ have bounds on the almost sure value of the $\limsup_{t \to \infty} \frac{1}{t}\int_0^t \cdots ds$ and the bounds are non-random and do not depend on the initial value $(x_0,y_0,v_0)$ with $v_0 \neq 0$.  As $\e \to 0$ the biggest of these bounds is ${\cal O}(\e^{4/3})$ and others are ${\cal O}(\e^2)$ and ${\cal O}(\e^2 \log(1/\e))$.  
This completes the proof of Theorem \ref{thm lamQ}.  \qed

\section{Adjoint method} \label{sec adjoint}

The next step involves the time average of $\chi_\e(h_t)Q_0(x_t,y_t,\phi_t)$.  It is slightly different from the treatment in \cite{BG02} because the invariant probability measure $\mu$ for $\{(x_t,y_t): t \ge 0\}$ given by \eqref{add eps} depends only on the ratio $\e^2\beta/(\e\sigma^2) = \beta/\sigma^2$ and not on $\e$.  The stationary distribution for $\{h_t = H(x_t,y_t): t \ge 0\}$ does not depend on $\e$ and so there is no need to do stochastic averaging for $\{h_t: t \ge 0\}$.  Instead of approximating the adjoint equation for $Q_0(x,y,\phi) - \overline{\lambda}$ (see three lines above \cite[eqn (20)]{BG02}) it suffices to approximate the adjoint equation for $Q_0(x,y,\phi) - \gamma_0 G(h)$, see \eqref{LFeps}.  Therefore the functions $c(x,y)$, $d(x,y)$, $\overline{c}(h)$, $\overline{d}(h)$, the operators ${\cal N}$, $\widetilde{\cal N}$, and the functions $\Psi(h)$, $C(x,y)$ and $D(x,y)$ appearing in \cite[Section 2]{BG02} are not needed here.
      
\subsection{Preliminary calculation} This is based very closely in \cite[Section 2]{BG02}.  Define functions
\begin{equation} \label{ab}
 \begin{split}
 a(x,y) & =  J(x,y)/ \Gamma(H(x,y)),   \\
 b(x,y) & =  \Bigl(\Gamma(H(x,y))(U_1.\alpha_1^2)(x,y)\Bigr)^2. 
 \end{split}
 \end{equation} 
Then
   \begin{equation} \label{Q1}
 Q_0(x,y,\phi) = a(x,y)\cos \phi \sin \phi
       + b(x,y)\left(\frac{1}{2}\cos^2 \phi - \sin^2 \phi \cos^2 \phi \right).
 \end{equation}

  Recall the equations \eqref{sde} for $\{(x_t,y_t): t \ge 0\}$ and \eqref{dphi} for $\{\phi_t: t \ge 0\}$.  With the intention of applying It\^{o}'s formula to the process $\{(x_t,y_t,\phi_t): t \ge 0\}$ define the differential operator ${\cal L}_\varepsilon$ by 
 \begin{align} 
    {\cal L}_\e f & = U_1.f+ \e^2 \overline{\cal L}f +  \Bigl(\varepsilon^{2/3}t_0(z) + \varepsilon^{4/3}t_1(z) +\varepsilon^{2}t_2(z)+ \varepsilon^{8/3}t_3(z) +\varepsilon^{10/3}t_4(z)\Bigr)\frac{\partial f }{\partial \phi} \nonumber  \\
    & \quad 
    + \frac{1}{2}\Bigl(\varepsilon^{1/3}  t_5(z) + \varepsilon t_6(z) + \varepsilon^{5/3}  t_7(z)\Bigr)^2 \frac{\partial^2 f}{\partial \phi^2} \nonumber \\
    & \quad + \Bigl(\e^{4/3}t_5(z)+ \e^2 t_6(z)+\e^{8/3}t_7(z)\Bigr)\frac{\partial }{\partial \phi}(V_1.f) \nonumber \\
   & =: U_1.f+\e^2\overline{\cal L}f + \Bigl( \e^{2/3}{\cal M}_0 + \e^{4/3}{\cal M}_1+\e^2{\cal M}_2+\e^{8/3}{\cal M}_3+ \e^{10/3}{\cal M}_4\Bigr)f \nonumber \\
   & \quad +  \Bigl(\e^{4/3}{\cal N}_5+ \e^2{\cal N}_6+\e^{8/3}{\cal N}_7\Bigr)(V_1.f), \label{Leps}
  \end{align}
say, where $\overline{\cal L}$ is given in \eqref{overL} and each ${\cal M}_i$ is a second order differential operator acting on functions of $\phi$ and each operator ${\cal N}_i$ is a first order differential operator acting on functions of $\phi$.   
More precisely,
   \begin{align*}
  \lefteqn{ \e^{2/3}{\cal M}_0 + \e^{4/3}{\cal M}_1+\e^2{\cal M}_2+\e^{8/3}{\cal M}_3+ \e^{10/3}{\cal M}_4}\hspace{5ex} \\
  & =  \Bigl(\varepsilon^{2/3}t_0(x,y,\phi) + \varepsilon^{4/3}t_1(x,y,\phi) +\varepsilon^{2}t_2(x,y,\phi)+ \varepsilon^{8/3}t_3(x,y,\phi)
         +\varepsilon^{10/3}t_4(x,y,\phi)\Bigr)\frac{\partial }{\partial \phi} \\
    & \quad 
    + \frac{1}{2}\Bigl(\varepsilon^{1/3}  t_5(x,y,\phi) + \varepsilon t_6(x,y,\phi) + \varepsilon^{5/3}  t_7(x,y,\phi)\Bigr)^2 \frac{\partial^2 }{\partial \phi^2}
   \end{align*}
and
    $$
    \e^{4/3}{\cal N}_5+ \e^2{\cal N}_6+\e^{8/3}{\cal N}_7 = \Bigl(\e^{4/3}t_5(x,y,\phi)+ \e^2 t_6(x,y,\phi)+\e^{8/3}t_7(x,y,\phi)\Bigr)\frac{\partial }{\partial \phi}.   
   $$ 
   In particular, using \eqref{t_0} and \eqref{t_5}),
  \begin{equation}\nonumber 
   {\cal M}_0 = -a(x,y) \sin^2 \phi \frac{\partial }{\partial \phi} + b(x,y)\Bigl(-\cos^3 \phi \sin \phi \frac{\partial}{\partial \phi} + \frac{1}{2}\cos^4 \phi \frac{\partial^2}{\partial \phi^2} \Bigr).
   \end{equation}  
It follows from \eqref{Qt growth} that for any given $f \in C^2(\R/2\pi \Z)$ we have
      \begin{equation} \label{MN growth}
    \begin{split}
     {\cal M}_0f(x,y,\phi) &= {\cal O}(r^{2/3})\\ 
     {\cal M}_1f(x,y,\phi) &=  {\cal O}(r^{-2/3})\\
     {\cal M}_2f(x,y,\phi) & =  {\cal O}(r^{-2})\\
     {\cal M}_3f(x,y,\phi)  & =  {\cal O}(r^{-10/3})\\
      {\cal M}_4f(x,y,\phi) & =  {\cal O}(r^{-14/3})\\
      {\cal N}_5f(x,y,\phi) & =  {\cal O}(r^{1/3})\\
     {\cal N}_6f(x,y,\phi) & =  {\cal O}(r^{-1})\\
      {\cal N}_7f(x,y,\phi) & =  {\cal O}(r^{-7/3}) 
\end{split}
 \end{equation} 
as $r\to 0$, and all these functions have at most polynomial growth as $(x,y) \to \infty$.

\s

From the formulas (\ref{G1}, \ref{G2}, \ref{Gamma}) for $G_1(h)$, $G_2(h)$ and $\Gamma(h)$ and equation \eqref{G bis} we have
  $$
 \int_{H^{-1}(h)} a(x,y) \,dm_h(x,y) =\int_{H^{-1}(h)} b(x,y) \,dm_h(x,y) =  \big|G_1(h)\big|^{2/3} \bigl(G_2(h)\bigr)^{1/3} = G(h)
 $$
 for $h > 0$.
It follows that there exist functions  $A(x,y)$ and
$B(x,y)$ defined for $(x,y) \neq (0,0)$ such that
 \begin{equation} \label{AB}
 \begin{split}
 (U_1.A)(x,y)  & =  a(x,y) - G(H(x,y)) \\
 (U_1.B)(x,y)  & =  b(x,y) - G(H(x,y)).
 \end{split} \end{equation}
On each Hamiltonian orbit $H^{-1}(h) = \{(x,y): H(x,y) = h\}$ the functions $A$ and $B$ are well-defined up to additive constants, so we may assume $A(0,y) = B(0,y)$ for all $y > 0$.    
Next define the operator
  \begin{equation}
  \overline{\cal M} =\left(-\sin^2 \phi -\cos^3 \phi  \sin \phi \right)
      \frac{\partial}{\partial \phi}
      + \frac{1}{2} \cos^4 \phi \frac{\partial^2}{\partial
         \phi^2}. \nonumber
 \end{equation}
It is hypoelliptic on $\R/(2 \pi \Z)$, with invariant probability measure $\nu$, say, on $\R/(2 \pi \Z)$. Define
 \begin{equation}
 \gamma_0 = \int_{\R/(2 \pi \Z)} \left( \cos \phi \sin \phi +
             \frac{1}{2}\cos ^2 \phi - \cos^2 \phi \sin^2
             \phi\right) \,d\nu(\phi), \nonumber 
 \end{equation}
then there is a smooth bounded function $R(\phi)$ so that
 \begin{equation}\label{step2}
 \overline{\cal M}R(\phi) =  \cos \phi \sin \phi +
             \frac{1}{2}\cos ^2 \phi - \cos^2 \phi \sin^2
             \phi - \gamma_0.
  \end{equation}
   Here $\gamma_0$ is a fixed constant.  As in \cite{BG02} we can
identify it as the top Lyapunov exponent of the linear SDE
 $$
 dv_t = \left[\begin{array}
          {cc} 0 & 1\\0 & 0 \end{array}\right]v_t dt +
     \left[\begin{array}
               {cc} 0 & 0\\1 & 0 \end{array}\right]v_t dW_t.
 $$
Numerically $\gamma_0 \sim 0.29$.  The exact formula for $\gamma_0$ given in Theorem \ref{thm main} is due to Ariaratnam and Xie \cite{AX90}.

Now define $F_\varepsilon(x,y,\phi)$ by
 \begin{align}
 F_\varepsilon(x,y,\phi) & = 
     A(x,y)\Bigl( \cos \phi \sin \phi - \sin^2 \phi \, R'(\phi) \Bigr) \nonumber \\
     & \quad  + B(x,y)\left(\frac{1}{2}\cos ^2 \phi - \cos^2 \phi \sin^2 \phi + \cos^3 \phi \sin \phi \, R'(\phi)-\frac{1}{2}\cos ^4 \phi \, R''(\phi)\right) \nonumber\\
    & \quad +  \varepsilon^{-2/3}R(\phi) \nonumber\\
    & =:  A(x,y)\Psi_1(\phi) +B(x,y)\Psi_2(\phi) + \e^{-2/3}R(\phi),  \label{Feps}  
\end{align}
say.  Using \eqref{Q1}, \eqref{AB} and \eqref{step2} we have 
  \begin{align}
 (U_1+\e^{2/3} {\cal M}_0)F_\varepsilon(x,y,\phi)
 &  = Q_0(x,y,\phi) - \gamma_0
  G(H(x,y))  \nonumber \\
  & \quad  + \varepsilon^{2/3}\Bigl(A(x,y){\cal M}_0\Psi_1(\phi)+B(x,y){\cal M}_0\Psi_2(\phi)\Bigr).  \label{L0Feps}
  \end{align}
Using \eqref{Leps} and \eqref{L0Feps}, 
\begin{align}
   {\cal L}_\e F_\e(x,y,\phi) & = Q_0(x,y,\phi) - \gamma_0
  G(h) \nonumber \\
  & \quad + \e^2\overline{\cal L}A(x,y) \Psi_1(\phi)+  \e^2\overline{\cal L}B(x,y) \Psi_2(\phi) \nonumber \\
  & \quad +A(x,y)\Bigl( \e^{2/3}{\cal M}_0 + \e^{4/3}{\cal M}_1+\e^2{\cal M}_2+\e^{8/3}{\cal M}_3+ \e^{10/3}{\cal M}_4\Bigr)\Psi_1(\phi) \nonumber \\
    & \quad +B(x,y)\Bigl( \e^{2/3}{\cal M}_0 + \e^{4/3}{\cal M}_1+\e^2{\cal M}_2+\e^{8/3}{\cal M}_3+ \e^{10/3}{\cal M}_4\Bigr)\Psi_2(\phi) \nonumber \\
    & \quad + (V_1.A)(x,y)\Bigl(\e^{4/3}{\cal N}_5+ \e^2{\cal N}_6+\e^{8/3}{\cal N}_7\Bigr)\Psi_1(\phi) \nonumber \\       
     & \quad + (V_1.B)(x,y)\Bigl(\e^{4/3}{\cal N}_5+ \e^2{\cal N}_6+\e^{8/3}{\cal N}_7\Bigr)\Psi_2(\phi) \nonumber\\
     & \quad + \e^{-2/3}\Bigl( \e^{4/3}{\cal M}_1+\e^2{\cal M}_2+\e^{8/3}{\cal M}_3+ \e^{10/3}{\cal M}_4\Bigr)R(\phi) . \label{LFeps}
  \end{align}
Note that there is no ${\cal M}_0$ term in the last line. 

\subsection{Estimates on the functions $A$ and $B$} \label{sec AB}

Up to this point explicit formulas have been available for all the functions involved in the calculations.  In this section we follow the methods established in \cite[Section 3.3]{BG02} to obtain some {\it a priori} estimates on the functions $A$ and $B$ defined in \eqref{AB}.    The first result is quoted unchanged from \cite{BG02}. 
    
\begin{lemma} \label{lem AB} {\rm \cite[Lemma 11]{BG02}}
 Let $f$ be a smooth
function on ${\bf R}^2 \setminus \{(0,0)\}$. Write $\tilde{f}(x,y) = f(x,y) - \int f \,dm_h$ for $(x,y)
\in H^{-1}(h)$.  Then there is a unique function $F$ on ${\bf R}^2
\setminus \{(0,0)\}$ such that $U_1.F(x,y) = \tilde{f}(x,y)$ and
$F(0,y) = 0$ for all $y > 0$.  Moreover we have the equality
  \begin{equation} \nonumber
  U_1.(U_2.F)(x,y)  = U_2.\tilde{f}(x,y) - J(x,y)\tilde{f}(x,y)
    \end{equation}
and the inequalities
   \begin{align*}
   |F(x,y)| & \le   T(h)\sup\{|\tilde{f}(u,v)|: (u,v) \in H^{-1}(h)\}, \\
   |(U_2.F)(x,y)| & \le   T(h)\sup\{|(U_2-J).\tilde{f}(u,v)|: (u,v) \in H^{-1}(h)\},
       \\
   |(U_2^2.F)(x,y)| & \le  T(h)\sup\{|(U_2-J)^2.\tilde{f}(u,v)|: (u,v) \in
   H^{-1}(h)\} 
   \end{align*}
valid for $(x,y) \in H^{-1}(h)$.
\end{lemma}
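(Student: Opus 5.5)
The proof will work on one orbit at a time, in time coordinates along that orbit. Fix $h>0$ and let $\xi_s$, $0\le s\le T(h)$, be the Hamiltonian trajectory with $\xi_0=(0,\sqrt{2h})$, the unique point of $H^{-1}(h)$ with $x=0$, $y>0$. Since $U_1=\overline{\nabla}H$ generates this flow, $U_1.F=\tilde f$ together with $F(0,y)=0$ forces
$$F(\xi_s)=\int_0^s \tilde f(\xi_u)\,du .$$
This gives uniqueness. It also gives existence, because $\int_0^{T(h)}\tilde f(\xi_u)\,du = T(h)\bigl(\int f\,dm_h-\int f\,dm_h\bigr)=0$, so the formula is consistent after one full period. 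Smoothness of $F$ on $\R^2\setminus\{(0,0)\}$ follows from smooth dependence of the flow and of $T(h)$ on the initial point. A cleaner way to see it is to use a smooth time function $\tau(x,y)$ on the slit plane, defined by $\xi_{\tau}$ passing through $(x,y)$, and note that the jump of $\tau$ across the slit is compensated by the vanishing period integral. The first inequality is then immediate: $|F(\xi_s)|\le s\sup|\tilde f|\le T(h)\sup_{H^{-1}(h)}|\tilde f|$.

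For the identity, I will use the Lie bracket $[U_1,U_2]$. Since $U_1.H=0$ and $U_2.H=1$, we get $[U_1,U_2].H=U_1.1-U_2.0=0$, so $[U_1,U_2]$ is tangent to the level sets and equals $cU_1$ for some function $c$. The structure equation behind \eqref{dw0}, namely \cite[Lemma 1]{BG02}, identifies $c$ as $-J$; that is, $[U_2,U_1]=JU_1$. The sign is fixed by requiring consistency with \eqref{int J} and with the shear computation in the Introduction, where the flow map of $U_2$ carries the $U_1$ vector field on $H^{-1}(h)$ to a multiple involving $T'(h)$. Then
$$U_1.(U_2.F)=U_2.(U_1.F)-[U_2,U_1].F=U_2.\tilde f-J\,U_1.F=U_2.\tilde f-J\tilde f,$$
which is the claimed equality.

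The second and third inequalities come from repeating this argument with $U_2.F$ and $U_2^2.F$ in place of $F$. Set $G=U_2.F$. Then $U_1.G=g$ with $g=(U_2-J).\tilde f$, so integrating along the orbit gives
$$G(\xi_s)=G(\xi_0)+\int_0^s g(\xi_u)\,du .$$
Two facts are needed here. First, $\int g\,dm_h=0$; this holds automatically because $g=U_1.G$ with $G$ single valued, hence periodic along the orbit. Second, and this is the main obstacle, we need $G(\xi_0)=0$, i.e.\ $U_2.F=0$ on the positive $y$-axis. That does not follow directly from $F(0,y)=0$, since $U_2$ is not tangent to the axis in general. However, on the $y$-axis $H_x=0$, so $U_2=(0,1/y)$ is tangent to it, and hence $U_2.F(0,y)=0$ there. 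With $G(\xi_0)=0$, the same integration bound gives $|U_2.F|\le T(h)\sup|(U_2-J).\tilde f|$. Iterating once more with $U_2^2.F$, which again vanishes on the axis by the same tangency, gives the third bound with $(U_2-J)^2.\tilde f$, where the operator is read as $U_2-J$ applied twice.

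I expect the main care to be needed in two places: fixing the sign convention in $[U_2,U_1]=JU_1$, and verifying the tangency of $U_2$ along the axis. Everything else is integration along orbits of length $T(h)$.
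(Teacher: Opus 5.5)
Your proof is correct. The paper itself gives no argument here and simply quotes [BG02, Lemma 11]; your orbit-integration proof (base point $(0,\sqrt{2h})$, the commutator relation $[U_2,U_1]=J\,U_1$, and the observation that $U_2=(0,1/y)$ is tangent to the positive $y$-axis, so that $U_2.F$ and $U_2^2.F$ inherit the normalization $F(0,y)=0$) is the natural self-contained route to it.

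The one step to firm up is the sign of the bracket, which you fix only by ``consistency''. A direct check confirms it: at $(1,0)$ one has $J=3/4$, $U_1=(0,-2)$ and $[U_1,U_2]=(0,3/2)=-J\,U_1$. In general it can be read off from \eqref{dw0}, which says that along the Hamiltonian flow $\Phi_t$ one has $D\Phi_t\,U_2(x_0)=U_2(x_t)+\bigl(\int_0^t J(x_s)\,ds\bigr)U_1(x_t)$; differentiating this pullback in $t$ gives $[U_1,U_2]=-J\,U_1$.
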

  
The change in $V_1$ from multiplicative to additive noise causes changes in \cite[Lemma 12]{BG02}.   Here is the new version. 
   
   \begin{lemma} \label{lem AB2}
For the system \eqref{add eps}, suppose $U_1.F(x,y) =
\tilde{f}(x,y)$ as above, and that $F(0,y)= 0$ for all $y > 0$.  For ease of
notation write $\|\tilde{f}\|_h =
\sup\{|\tilde{f}(u,v)|: (u,v) \in H^{-1}(h)\}$.

(i) There are constants $k$ and $\delta$ such that if $r =
\sqrt{x^2+y^2} < \delta$ and $H(x,y) = h$ then
  \begin{align*}
  |F(x,y)| & \le  k \|\tilde{f}\|_h \\
|V_1.F(x,y)| & \le  k\left[ r^{-1}\|\tilde{f}\|_h+ r\|U_2.\tilde{f}\|_h \right] \\
  \left|\overline{\cal L}F(x,y)\right|
   & \le   \left[r^{-2}\|\tilde{f}\|_h + r^{-1}\|V_1.\tilde{f}\|_h +\|U_2.\tilde{f}\|_h + r^2\|U_2^2.\tilde{f}\|_h \right]
     \end{align*}

(ii) If $\tilde{f}$ and all of its first and second partial derivatives have at most polynomial
growth as $(x,y) \to \infty$, then so do $F$ and $V_1.F$ and
$\overline{\cal L}F$.
\end{lemma}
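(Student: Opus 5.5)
The plan is to reduce every quantity to $F$, $U_2.F$ and $U_2^2.F$, whose sizes are controlled by Lemma \ref{lem AB}, and then use explicit formulas for $V_1$, $U_1$, $U_2$ near $(0,0)$.

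\emph{Decomposition of $V_1$ and $\overline{\cal L}$.} Write $V_1 = \alpha_1^1 U_1 + \alpha_1^2 U_2$ with $\alpha_1^1 = -\sigma(x+x^3)/\|\nabla H\|^2$ and $\alpha_1^2 = \sigma y$. Then
\[
V_1.F = \alpha_1^1 \tilde f + \sigma y\, U_2.F ,
\]
since $U_1.F = \tilde f$. Near the origin $\|\nabla H\|^2 \sim r^2$, so $|\alpha_1^1| = {\cal O}(r^{-1})$ and $|\alpha_1^2| = {\cal O}(r)$.

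For $\overline{\cal L}F = V_0.F + \tfrac12 V_1^2.F$, decompose $V_0 = \alpha_0^1U_1+\alpha_0^2U_2$ in the same way, giving $V_0.F = \alpha_0^1\tilde f - \beta y^2 U_2.F$, with $\alpha_0^1 = {\cal O}(1)$. Apply $V_1$ once more to $V_1.F$:
\[
V_1^2.F = (V_1.\alpha_1^1)\tilde f + \alpha_1^1 V_1.\tilde f + (V_1.\alpha_1^2) U_2.F + \alpha_1^2\bigl(\alpha_1^1 U_1.(U_2.F) + \alpha_1^2 U_2^2.F\bigr).
\]
The term $U_1.(U_2.F)$ is handled by the identity in Lemma \ref{lem AB}, namely $U_1.(U_2.F) = U_2.\tilde f - J\tilde f$, which needs no period factor. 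The coefficients satisfy $V_1.\alpha_1^1 = {\cal O}(r^{-2})$, $V_1.\alpha_1^2 = \sigma^2$, $\alpha_1^1\alpha_1^2 = {\cal O}(1)$ and $(\alpha_1^2)^2 = {\cal O}(r^2)$, and $J = {\cal O}(1)$.

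\emph{Bounds on $F$, $U_2.F$, $U_2^2.F$ (part (i)).} Lemma \ref{lem AB} with $T(h)\to 2\pi$ as $h\to0$ gives $|F|\le k\|\tilde f\|_h$ directly.

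For $U_2.F$, expand $(U_2-J).\tilde f$: its $U_2.\tilde f$ part contributes $r\|U_2.\tilde f\|_h$ to $|V_1.F|$ after multiplying by $|\alpha_1^2|={\cal O}(r)$, and its $J\tilde f$ part contributes ${\cal O}(r)\|\tilde f\|_h$, which is absorbed by $r^{-1}\|\tilde f\|_h$. This gives the stated bound on $|V_1.F|$.

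For $U_2^2.F$, Lemma \ref{lem AB} bounds it by the sup of $(U_2-J)^2.\tilde f = U_2^2.\tilde f - 2J\,U_2.\tilde f - (U_2.J)\tilde f + J^2\tilde f$. Here I expect $U_2.J = {\cal O}(r^{-2})$, since $U_2$ has size $r^{-1}$ and differentiates an ${\cal O}(1)$ function homogeneous of degree zero to leading order. After multiplying by $(\alpha_1^2)^2={\cal O}(r^2)$, these pieces produce the terms $r^2\|U_2^2.\tilde f\|_h$, $\|U_2.\tilde f\|_h$ (through $r^2\cdot r^{-1}$ bounds, absorbed) and $\|\tilde f\|_h$. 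The remaining terms of $\overline{\cal L}F$ give $r^{-2}\|\tilde f\|_h$, $r^{-1}\|V_1.\tilde f\|_h$ and $\|U_2.\tilde f\|_h$. Collecting everything gives the stated inequality, with a constant $k$ absorbed.

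\emph{Main obstacle.} The delicate point is to keep the quantities $\|U_2.\tilde f\|_h$ and $\|U_2^2.\tilde f\|_h$ on the whole orbit separate from pointwise values. One must verify that the coefficient asymptotics, in particular $U_2.J={\cal O}(r^{-2})$ and the bounds on $V_1.\alpha_1^1$, hold uniformly on small orbits. This is a Taylor expansion of the explicit $H$ near $0$, where $H$ is a perturbation of $(x^2+y^2)/2$ and $J$ is ${\cal O}(r^2)$-close to its value for the harmonic oscillator.

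\emph{Part (ii).} The same identities give (ii). $T(h)$ decays polynomially (Lemma \ref{lem elliptic}), all coefficients $\alpha_i^j$, $J$, $U_2.J$ are rational in $x,y$ with polynomially bounded behaviour at infinity, and the suprema over the orbit $H^{-1}(h)$ are polynomial in $h$, hence in $r$. So $F$, $V_1.F$ and $\overline{\cal L}F$ grow at most polynomially.
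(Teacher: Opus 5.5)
Your proposal is correct and is essentially the paper's own proof. It uses the same expressions for $V_0.F$, $V_1.F$ and $V_1.(V_1.F)$ in terms of $\tilde f$, $U_2.F$, $U_1.(U_2.F)$ and $U_2^2.F$, the bounds and identity of Lemma \ref{lem AB}, and the same coefficient asymptotics near $(0,0)$, with (ii) following from polynomial growth of those coefficients.

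There are two harmless slips:
\begin{itemize}
\item The paper uses the sharper $U_2.J={\cal O}(1)$. To leading order $U_2=r^{-1}\partial_r$, which annihilates the degree-zero leading part $3x^2(x^2-y^2)/r^4$ of $J$. Your cruder ${\cal O}(r^{-2})$ still suffices after multiplying by $(\alpha_1^2)^2={\cal O}(r^2)$.
\item Your remark that $J$ is ${\cal O}(r^2)$-close to its harmonic-oscillator value is false. That value is $0$, whereas $J=3x^2(x^2-y^2)/r^4+{\cal O}(r^2)$ is not small. Nothing depends on it, since you correctly use $J={\cal O}(1)$.
\end{itemize}
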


\proof We convert estimates on $U_2.F$ and $U_2^2.F$ into estimates on $V_1.F$ and $\overline{\cal L}F$.   Since $V_i(x,y) = \alpha_i^1(x,y) U_1(x,y)+ \alpha_i^2(x,y) U_2(x,y)$ we have 
   \begin{align*}
   (V_i.F)(x,y) & = \alpha_i^1(x,y) (U_1.F)(x,y) + \alpha_i^2(x,y) (U_2.F)(x,y)\\
   & =  \alpha_i^1(x,y) \tilde{f}(x,y)
   + \alpha_i^2(x,y) (U_2.F)(x,y)
   \end{align*} 
and 
   \begin{align*}
   \bigl(V_1.(V_1.F)\bigr)(x,y) & = (V_1.\alpha_1^1)(x,y) \tilde{f}(x,y) + \alpha_1^1(x,y)V_1.\tilde{f}(x,y) + (V_1.\alpha_1^2)(x,y) (U_2.F)(x,y) \\
     & \quad  + \alpha_1^2(x,y)\Bigl( \alpha_1^1(x,y) \bigl(U_1.(U_2.F)\bigr)(x,y) + \alpha_1^2(x,y)(U_2^2.F)(x,y)\Bigr).
  \end{align*}
Note also that
      $$\|(U_2-J).\tilde{f}\|_h  \le \|U_2.\tilde{f}\|_h+  \|J\|_h \|\tilde{f}\|_h
      $$
and 
  $$\|(U_2-J)^2.\tilde{f}\|_h \le \|U_2^2.\tilde{f}\|_h + 2\|J\|_h \| \|U_2.\tilde{f}\|_h+ \bigl(\|U_2.J\|_h+\|J\|_h^2\bigr)\|\tilde{f}\|_h.
$$
The results now follow from Lemma \ref{lem AB}.  In (i) the first inequality uses the fact that $T(h)$ is bounded as $h \to 0$.  The second inequality uses the additional facts that $\alpha_1^1(x,y) = {\cal O}(r^{-1})$ and $\alpha_1^2 = {\cal O}(r)$ and $J(x,y) = {\cal O}(1)$.  The third inequality uses also the facts that $\alpha_0^1(x,y) = {\cal O}(1)$ and $\alpha_0^2(x,y)= {\cal O}(r^2)$ and $V_1.\alpha_1^1(x,y) = {\cal O}(r^{-2})$ and $V_1.\alpha_1^2(x,y) = {\cal O}(1)$ and $U_2.J(x,y) = {\cal O}(1)$.  In (ii) we
use the fact that all of the functions just mentioned have at most
polynomial growth. 
\endproof

In place of \cite[Corollary 5]{BG02} we have 

     \begin{lemma} \label{lem AB3}
For the functions $A$ and $B$ satisfying \eqref{AB} with $A(0,y) = B(0,y)$ for all $y >0$, 
    \begin{equation} \label{AB growth}
 \begin{split}
       A(x,y) \mbox{ and } B(x,y) & \mbox{ are }{\cal O}(r^{2/3})\\
       V_1.A(x,y) \mbox{ and } V_1.B(x,y) & \mbox{ are } {\cal O}(r^{-1/3}) \\
       \overline{\cal L}A(x,y) \mbox{ and }\overline{\cal L}B(x,y) & \mbox{ are }{\cal O}(r^{-4})
   \end{split}
   \end{equation}
   as $r \to 0$, and $A$ and $B$ and $V_1.A$ and $V_1.B$ and $\overline{\cal L}A$ and $\overline{\cal L}B$ all have at most polynomial growth as $r \to \infty$.
\end{lemma}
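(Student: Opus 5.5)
We apply Lemma \ref{lem AB2} with $\tilde{f} = a - G\circ H$ (giving $A$) and $\tilde{f} = b - G\circ H$ (giving $B$). The only work is to estimate $\|\tilde f\|_h$, $\|U_2.\tilde f\|_h$, $\|V_1.\tilde f\|_h$ and $\|U_2^2.\tilde f\|_h$ as $h\to 0$, using $h\sim r^2/2$.

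One point needs care first. Lemma \ref{lem AB2} is stated for the normalization $F(0,y)=0$, whereas \eqref{AB} only imposes $A(0,y)=B(0,y)$. We therefore take $A$ and $B$ to be the normalized solutions from Lemma \ref{lem AB}, each vanishing on $\{x=0,y>0\}$. These certainly satisfy $A(0,y)=B(0,y)=0$. More generally, adding the same function of $h$ to both $A$ and $B$ changes nothing in \eqref{LFeps} that matters later, so this choice is harmless.

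The estimates on $\tilde f$ come from Lemma \ref{lem GG} and the explicit formulas. We have $J={\cal O}(1)$, $\Gamma(h)\sim c h^{-1/3}$ and $G(h)\sim c' h^{1/3}$. Since $U_1.\alpha_1^2 = U_1.(\sigma y) = -\sigma(x+x^3)={\cal O}(r)$, we get
$$a = J/\Gamma(H)={\cal O}(r^{2/3}), \qquad b = \Gamma(H)^2\sigma^2(x+x^3)^2={\cal O}(r^{-2/3}r^2)={\cal O}(r^{4/3}).$$
Hence $\|\tilde f\|_h={\cal O}(r^{2/3})$ in both cases, and the first bound of Lemma \ref{lem AB2}(i) gives $A,B={\cal O}(r^{2/3})$.

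For derivatives, each application of $U_2$ costs a factor $r^{-2}$ at worst. This is because $U_2=\nabla H/\|\nabla H\|^2$ has size ${\cal O}(r^{-1})$ and differentiates functions that are homogeneous-like of degree $k$ near the origin, losing one more power of $r$. Moreover $U_2$ acting on a function of $h$ is just $d/dh$, which costs $h^{-1}\sim r^{-2}$. Similarly, $V_1=\sigma\partial_y$ costs $r^{-1}$. The ${\cal O}(1)$ function $J$ is a rational function, homogeneous of degree $0$ at leading order, so $U_2.J={\cal O}(r^{-2})$ at most. This gives
$$\|U_2.\tilde f\|_h={\cal O}(r^{-4/3}), \qquad \|V_1.\tilde f\|_h={\cal O}(r^{-1/3}), \qquad \|U_2^2.\tilde f\|_h={\cal O}(r^{-10/3}).$$
The second inequality of Lemma \ref{lem AB2}(i) then gives
$$V_1.F={\cal O}\bigl(r^{-1}r^{2/3}+r\,r^{-4/3}\bigr)={\cal O}(r^{-1/3}).$$
The third gives $\overline{\cal L}F$ bounded by the sum of
$$r^{-2}r^{2/3}=r^{-4/3},\qquad r^{-1}r^{-1/3}=r^{-4/3},\qquad r^{-4/3},\qquad r^2r^{-10/3}=r^{-4/3},$$
so $\overline{\cal L}F={\cal O}(r^{-4/3})$. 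This is better than the claimed ${\cal O}(r^{-4})$.

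The stated bound $r^{-4}$ leaves a margin. So if the crude derivative-counting is too optimistic somewhere, for instance in the $U_2$-derivatives of $J$ near the origin (where $J$ is not smooth at $(0,0)$), or in the loss incurred in passing from $(U_2-J)^2$ to $U_2^2$ inside the proof of Lemma \ref{lem AB2}, I would simply use the worst-case count. Under that count each derivative of anything costs at most $r^{-2}$, giving $\overline{\cal L}F={\cal O}(r^{-2}\cdot r^{-2}\cdot r^{2/3})$ or similar. This still lies within ${\cal O}(r^{-4})$.

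The main obstacle is verifying these derivative bounds on $\tilde f$ near the origin. $J$ and $\Gamma(H)$ are not smooth at $(0,0)$, so I would write everything in polar-type variables $x=r\cos\psi$, $y=r\sin\psi$. I would then check that $J$ is a smooth function of $\psi$ plus ${\cal O}(r^2)$ corrections, and track powers of $r$ through $U_2=(\partial_r+\ldots)/r$ at leading order.

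Growth at infinity follows directly from Lemma \ref{lem AB2}(ii). Indeed, $a$, $b$ and $G\circ H$ and their first and second derivatives are polynomially bounded, by Lemma \ref{lem GG} for $\Gamma$ and $G$ (whose derivatives behave like powers of $h$) and by the explicit rational form of $J$.
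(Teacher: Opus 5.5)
Your proposal is correct and is essentially the paper's proof: apply Lemma \ref{lem AB2} to $\tilde a = a - G\circ H$ and $\tilde b = b - G\circ H$, using the bounds ${\cal O}(r^{2/3})$, ${\cal O}(r^{-4/3})$, ${\cal O}(r^{-1/3})$ and ${\cal O}(r^{-10/3})$ for $\tilde f$, $U_2.\tilde f$, $V_1.\tilde f$ and $U_2^2.\tilde f$; you are also right that this actually yields $\overline{\cal L}A,\overline{\cal L}B = {\cal O}(r^{-4/3})$, and that the normalization must be $A(0,y)=B(0,y)=0$. Two small slips: since $\Gamma(H)^2 \asymp h^{-2/3} \asymp r^{-4/3}$ you should have $b = {\cal O}(r^{2/3})$, not ${\cal O}(r^{4/3})$ (harmless, as $G\circ H = {\cal O}(r^{2/3})$ anyway), and adding a common $c(H)$ to $A$ and $B$ is not harmless in general, since a singular $c$ would break all three bounds, so the zero normalization is genuinely required rather than merely a convenient choice.
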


\proof  Write $\tilde{a}(x,y) = a(x,y) - G(H(x,y))$ and $\tilde{b}(x,y) =   b(x,y) - G(H(x,y))$.  Using the formulas \eqref{ab} for $a(x,y)$ and $b(x,y)$ together with the estimates for $\Gamma(h)$ and $G(h)$ in Lemma \ref{lem GG} we get 
 \begin{align*}
       \tilde{a}(x,y) \mbox{ and }\tilde{b}(x,y) & \mbox{ are }  {\cal O}(r^{2/3})\\
       U_2.\tilde{a}(x,y) \mbox{ and }U_2.\tilde{b}(x,y) & \mbox{ are } {\cal O}(r^{-4/3}) \\
       V_1.\tilde{a}(x,y) \mbox{ and }V_1.\tilde{b}(x,y) & \mbox{ are } {\cal O}(r^{-1/3}) \\
       U_2^2.\tilde{a}(x,y) \mbox{ and }U_2^2.\tilde{b}(x,y) & \mbox{ are } {\cal O}(r^{-10/3})
   \end{align*}
as $r \to 0$, and that all eight functions have at most polynomial growth.  The results now follow immediately from Lemma \ref{lem AB2}.  \endproof

\subsection{Replacement of $Q_0(x,y,\phi)$ with $\gamma_0 G(h)$}

 \begin{theorem} \label{thm QG} For the system \eqref{add eps} there exists $K_2$ such that such that for all $0 < \e \le 1$ and $(x_0,y_0,v_0)$ with $v_0 \neq 0$   
  $$
  \limsup_{n \to \infty} \frac{1}{n} \left|\int_0^n \chi_\e(h_s) \Bigl( Q_0(x_s,y_s,\phi_s) - \gamma_0 G(h_s)\Bigr)ds \right|  \le K_2 \e^{2/3}
  $$
almost surely.
 \end{theorem}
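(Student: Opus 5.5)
Apply It\^{o}'s formula to the truncated process $\chi_\e(h_t)F_\e(x_t,y_t,\phi_t)$, with $F_\e$ as in \eqref{Feps}. The truncation makes the process vanish whenever $h_t<\e$, so by Remark \ref{rem frame} It\^{o}'s formula is legitimate and uses the generator ${\cal L}_\e$ from \eqref{Leps}. Schematically,
\begin{align*}
d\bigl(\chi_\e(h_t)F_\e\bigr) & = \chi_\e(h_t){\cal L}_\e F_\e\,dt + F_\e\,d\chi_\e(h_t) + \e^2\chi_\e'(h_t)(V_1.H)\Bigl(V_1.F_\e+(\e^{-2/3}\cdots)\partial_\phi F_\e\Bigr)dt \\
& \quad + \chi_\e(h_t)\Bigl(\e\, V_1.F_\e + (\e^{1/3}t_5+\e t_6+\e^{5/3}t_7)\partial_\phi F_\e\Bigr)dW_t.
\end{align*}
Here the cross-variation term arises from $dh_t$ paired with the $dW$ parts of $d\phi_t$ and of $d(x_t,y_t)$. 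Substituting \eqref{LFeps} isolates $\chi_\e(h_t)\bigl(Q_0-\gamma_0G(h_t)\bigr)dt$. I then integrate from $0$ to $n$, divide by $n$, and show that every remaining term has almost sure $\limsup$ bounded by $K\e^{2/3}$.

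\textbf{Boundary term.} On the left side, $\frac1n\chi_\e(h_n)F_\e(x_n,y_n,\phi_n)$ tends to $0$. Indeed, $R$, $\Psi_1$ and $\Psi_2$ are bounded, and $A,B$ are ${\cal O}(r^{2/3})$ near $0$ with polynomial growth at infinity by Lemma \ref{lem AB3}. Hence $|F_\e|\le g(x,y)$ with $g\in L^1(\mu)$ for each fixed $\e$, and Lemma \ref{lem ergod}(i) applies along integer times. This is the reason the statement uses discrete $n$.

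\textbf{Stochastic integrals.} These go to $0$ by Lemma \ref{lem ergod}(iii), since each integrand is dominated by a function that is bounded near the origin after truncation and has polynomial growth, hence lies in $L^2(\mu)$.

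\textbf{Remaining $dt$ terms.} For each of these I use Lemma \ref{lem ergod}(ii) followed by Lemma \ref{lem gint}, matching powers of $\e$ against the blow-up rates in \eqref{MN growth} and \eqref{AB growth}, as emphasized in Remark \ref{rem r e}.
\begin{itemize}
\item The $\e^{2/3}{\cal M}_0\Psi_i$ term is $A\cdot{\cal O}(r^{2/3})$, which is integrable, so it contributes ${\cal O}(\e^{2/3})$.
\item The $A\,\e^{4/3}{\cal M}_1\Psi_1$ term is $\e^{4/3}{\cal O}(r^{0})={\cal O}(\e^{4/3})$.
\item The $\e^2{\cal M}_2$ term with $A={\cal O}(r^{2/3})$ has $k=-4/3$, giving ${\cal O}(\e^2)$.
\item The $\e^{8/3}{\cal M}_3$ term has $k=-8/3$, giving $\e^{8/3}\e^{-1/3}$.
\item The $\e^{10/3}{\cal M}_4$ term has $k=-4$, giving $\e^{10/3}\e^{-1}$.
\item The $\e^{-2/3}{\cal M}_iR$ terms give $\e^{2/3}{\cal O}(1)$, then $\e^{4/3}\log(1/\e)$, then $\e^{2}\e^{-2/3}$, then $\e^{8/3}\e^{-4/3}$. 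All are ${\cal O}(\e^{2/3})$.
\item The $(V_1.A){\cal N}_i$ terms give $\e^{4/3}{\cal O}(r^0)$, $\e^2{\cal O}(r^{-4/3})$ and $\e^{8/3}{\cal O}(r^{-8/3})$, hence $\e^{4/3}$, $\e^2$ and $\e^{7/3}$.
\item The term $\e^2\overline{\cal L}A={\cal O}(r^{-4})$ has $k=-4$ in \eqref{gint}, giving $\e^2\cdot\e^{-1}=\e$.
\end{itemize}

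\textbf{Terms from $d\chi_\e$.} These are supported in $\{\e\le h\le2\e\}$, where $\e\chi_\e'$ and $\e^2\chi_\e''$ are bounded. The factor $F_\e$ is at most ${\cal O}(\e^{1/3})+\e^{-2/3}\|R\|_\infty$, so the $\e^{-2/3}R$ part is the worst. Using $\overline{\cal L}H={\cal O}(1)$, $(V_1.H)^2={\cal O}(r^2)$ and \eqref{gint2}, the term $\e^2\chi'_\e\,\overline{\cal L}H\cdot\e^{-2/3}R$ is $\e^{-2/3}\cdot\e\cdot\mu(\e\le h\le2\e)={\cal O}(\e^{4/3})$, and similarly for the $\chi''_\e$ term.

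\textbf{Cross-variation terms.} These are $\e\chi_\e'(V_1.H)$ multiplied by $\e V_1.F_\e$ or by $(\e^{1/3}t_5+\cdots)\e^{-2/3}R'$. Both are dominated by $c_1 r$ times $\e\,{\cal O}(r^{-1/3})$ or by $\e^{-1/3}{\cal O}(r^{1/3})+\e^{1/3}{\cal O}(r^{-1})+\e{\cal O}(r^{-7/3})$, integrated over the annulus. Each contributes at most ${\cal O}(\e^{4/3})$.

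\textbf{Main obstacle.} The hardest part is the careful bookkeeping for the boundary-layer terms and the $\overline{\cal L}A$ term, whose ${\cal O}(r^{-4})$ blow-up barely closes. I would first double-check that bound, and if it fails I would verify that $\e^2\cdot\e^{(k+2)/2}$ is still at most $\e^{2/3}$, which only requires $k\ge-14/3$. Collecting all the bounds, the worst is ${\cal O}(\e^{2/3})$, which yields $K_2$.
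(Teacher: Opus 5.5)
Your proposal is correct and follows essentially the same route as the paper. You apply It\^{o}'s formula to $\chi_\e(h_t)F_\e(z_t)$ using \eqref{LFeps} and \eqref{dchi}, kill the boundary term along integer times with Lemma \ref{lem ergod}(i), kill the $dW$ integrals with Lemma \ref{lem ergod}(iii), and match powers of $\e$ against \eqref{MN growth}, \eqref{AB growth} and Lemma \ref{lem gint} term by term, in agreement with the paper's steps (i)--(v); the dominant ${\cal O}(\e^{2/3})$ comes from the $\e^{2/3}{\cal M}_0$ and $\e^{-2/3}\e^{4/3}{\cal M}_1R$ terms, while the $\e^2\overline{\cal L}A$ term is not tight, contributing only ${\cal O}(\e)$.
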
   

\proof
Recall the function $F_\e(x,y,\phi)$ in \eqref{Feps} and the formulas for ${\cal L}_\e F_\e(x,y,\phi)$ and $d \chi_\e(h_t)$ in \eqref{LFeps} and  \eqref{dchi}.  For brevity of notation write $(x_t,y_t,\phi_t) = z_t$.
Then
  \begin{align}
  d\bigl(\chi_\e(h_t)F_\e(z_t)\bigr) & = 
  \chi_\e(h_t) d F_\e(z_t) + F_\e(z_t) d\chi_\e(h_t)+ d\chi_\e(h_t)dF_\e(z_t)  \nonumber\\
    & = \chi_\e(h_t){\cal L}_\e F_\e(z_t)dt \nonumber \\
     & \quad  + \chi_\e(h_t)\Bigl( \e V_1.F_\e(z_t) + (\e^{1/3}{\cal N}_5+\e{\cal N}_6+\e^{5/3}{\cal N}_7)F_\e(z_t)\Bigr)dW_t \nonumber\\
     & \quad + F_\e(z_t) \Bigl(\e^2\chi_\e'(h_t) \overline{\cal L}H(x_t,y_t) + \frac{\e^2}{2} \chi_\e''(h_t)\bigl((V_1.H)(x_t,y_t)\bigr)^2\Bigr)dt \nonumber\\
       & \quad + F_\e(z_t) \e \chi_\e'(h_t)V_1.H(x_t,y_t)dW_t \nonumber\\
       & \quad + \e \chi_\e'(h_t)V_1.H(x_t,y_t) \Bigl( \e V_1.F_\e(z_t) + (\e^{1/3}{\cal N}_5+\e{\cal N}_6+\e^{5/3}{\cal N}_7)F_\e(z_t)\Bigr)dt  \nonumber \\
       & =\chi_\e(h_t)\bigl( Q_0(x,y,\phi) - \gamma_0  G(h)\bigr)dt \nonumber \\
  & \quad + \chi_\e(h_t)\Bigl(\e^2\overline{\cal L}A(x,y) \Psi_1(\phi)+  \e^2\overline{\cal L}B(x,y) \Psi_2(\phi)\Bigr)dt \nonumber \\
  & \quad + \chi_\e(h_t)A(x,y)\Bigl( \e^{2/3}{\cal M}_0 + \e^{4/3}{\cal M}_1+\e^2{\cal M}_2+\e^{8/3}{\cal M}_3+ \e^{10/3}{\cal M}_4\Bigr)\Psi_1(\phi)dt \nonumber \\
    & \quad + \chi_\e(h_t)B(x,y)\Bigl( \e^{2/3}{\cal M}_0 + \e^{4/3}{\cal M}_1+\e^2{\cal M}_2+\e^{8/3}{\cal M}_3+ \e^{10/3}{\cal M}_4\Bigr)\Psi_2(\phi)dt \nonumber \\
    & \quad + \chi_\e(h_t)(V_1.A)(x,y)\Bigl(\e^{4/3}{\cal N}_5+ \e^2{\cal N}_6+\e^{8/3}{\cal N}_7\Bigr)\Psi_1(\phi) dt\nonumber \\       
     & \quad + \chi_\e(h_t)(V_1.B)(x,y)\Bigl(\e^{4/3}{\cal N}_5+ \e^2{\cal N}_6+\e^{8/3}{\cal N}_7\Bigr)\Psi_2(\phi) dt\nonumber\\
     & \quad + \chi_\e(h_t)\e^{-2/3}\Bigl( \e^{4/3}{\cal M}_1+\e^2{\cal M}_2+\e^{8/3}{\cal M}_3+ \e^{10/3}{\cal M}_4\Bigr)R(\phi)dt \nonumber \\
     & \quad  + \chi_\e(h_t)\Bigl( \e V_1.F_\e(z_t) + (\e^{1/3}{\cal N}_5+\e{\cal N}_6+\e^{5/3}{\cal N}_7)F_\e(z_t)\Bigr)dW_t \nonumber\\
     & \quad + F_\e(z_t) \Bigl(\e^2\chi_\e'(h_t) \overline{\cal L}H(x_t,y_t) + \frac{\e^2}{2} \chi_\e''(h_t)\bigl((V_1.H)(x_t,y_t)\bigr)^2\Bigr)dt \nonumber\\
       & \quad + F_\e(z_t) \e \chi_\e'(h_t)V_1.H(x_t,y_t)dW_t \nonumber\\
       & \quad + \e \chi_\e'(h_t)V_1.H(x_t,y_t) \Bigl( \e V_1.F_\e(z_t) + (\e^{1/3}{\cal N}_5+\e{\cal N}_6+\e^{5/3}{\cal N}_7)F_\e(z_t)\Bigr)dt.  \label{dFeps}    
  \end{align}   
Now integrate from $0$ to $n$ and divide by $n$.  On the left by Lemma \ref{lem AB3} the function $(x,y) \mapsto \sup\{\chi_\e(h)|F_\e(x,y,\phi)|: \phi \in \R/(2 \pi \Z)\}$ is $\mu$ integrable, so by Lemma \ref{lem ergod}(i) we have 
      $$
      \frac{1}{n} \big|\chi_\e(h_n)F_\e(z_n) - \chi_\e(h_0)F_\e(z_0) \big| \to 0
      $$
almost surely.  On the right we keep the term $\frac{1}{n} \left(\int_0^n \chi_\e(h_s) \bigl( Q_0(x_s,y_s,\phi_s) - \gamma_0 G(h_s)\bigr)ds \right)$.  It remains to obtain bounds as $\e \to 0$ on each of the remaining terms on the right using the methods seen earlier in the proof of Theorem \ref{thm lamQ}.  We will use Lemmas \ref{lem ergod} and \ref{lem gint} frequently.  In this calculation we will distinguish estimates as $r \to 0$ and estimates as $\e \to 0$ by writing ${\cal Q}(r^k)$ to denote a function $f(x,y,\phi)$ such that $\limsup_{r \to 0} |f(x,y,\phi)|/r^k <\infty$ and ${\cal O}(\e^k)$ to denote a function $g(\e)$ such that $\limsup_{\e \to 0} |g(\e)|/\e^k <\infty$.   So for example $\overline{\cal L}H(x,y,\phi) = -\beta y^2 + \sigma^2/2 = {\cal Q}(1)$ and $V_1.H(x,y,\phi) = \sigma y = {\cal Q}(r)$ and then
  $${\cal L}_\e(\chi_\e \circ H) = \e^2\chi_\e'(h) \overline{\cal L}H + \frac{\e^2}{2} \chi_\e''(h)[(V_1.H)]^2 = 
  \e {\cal Q}(1) + {\cal Q}(r^2).
  $$

(i) Terms involving $\overline{\cal L}A$ and $A$ and $V_1.A$ in \eqref{dFeps}. Before multiplying by the factor $\chi_\e(h_t)$, and using \eqref{AB growth} and \eqref{MN growth}, these terms have magnitude   
 \begin{multline}
  \e^2{\cal Q}( r^{-4}) + {\cal Q}(r^{2/3})\Bigl(  \e^{2/3}{\cal Q}(r^{2/3}) + \e^{4/3}{\cal Q}(r^{-2/3}) +\e^2{\cal Q}(r^{-2})+\e^{8/3}{\cal Q}(r^{-10/3})+ \e^{10/3}{\cal Q}(r^{-14/3})\Bigr)\\
   +  {\cal Q}(r^{-1/3})\Bigl(\e^{4/3}{\cal Q}(r^{1/3})+ \e^2{\cal Q}(r^{-1})+\e^{8/3}{\cal Q}(r^{-7/3})\Bigr) \nonumber
  \end{multline}
as $r \to 0$.   Now multiply by $\chi_\e(h)$ and consider $\limsup_{n \to \infty} \frac{1}{n} \left|\int_0^n \cdot ds\right|$ for each of the terms.  Using Lemmas \ref{lem ergod} and \ref{lem gint} we get
 \begin{multline}
  \e^2{\cal O}(\e^{-1}) +\Bigl(  \e^{2/3}{\cal O}(1) + \e^{4/3}{\cal O}(1) +\e^2{\cal O}(1)+\e^{8/3}{\cal O}(\e^{-1/3})+ \e^{10/3}{\cal O}(\e^{-1})\Bigr)\\
   +  \Bigl(\e^{4/3}{\cal O}(1)+ \e^2{\cal O}(1)+\e^{8/3}{\cal O}(\e^{-1/3})\Bigr). \nonumber
  \end{multline}
Together these terms are ${\cal O}(\e^{2/3})$.  Similarly for the terms involving $\overline{\cal L}B$ and $B$ and $V_1.B$. 

(ii) Terms involving ${\cal M}_iR$ for $1 \le i \le 4$ in \eqref{dFeps}.  Before multiplying by the factor $\chi_\e(h_t)$, and using \eqref{MN growth}, these terms have magnitude    
   $$
    \e^{-2/3}\Bigl( \e^{4/3}{\cal Q}(r^{-2/3})+\e^2{\cal Q}(r^{-2})+\e^{8/3}{\cal Q}(r^{-10/3})+ \e^{10/3}{\cal Q}(r^{-14/3})\Bigr)
 $$
as $r \to \infty$.  Now multiply by $\chi_\e(h)$ and consider $\limsup_{n \to \infty} \frac{1}{n} \left|\int_0^n \cdot ds\right|$ for each of the terms.  Using Lemmas \ref{lem ergod} and \ref{lem gint} we get
   $$
    \e^{2/3}{\cal O}(1)+\e^{4/3}{\cal O}(|\log \e|)+\e^2{\cal O}(\e^{-2/3})+ \e^{8/3}{\cal Q}(\e^{-4/3}) = {\cal O}(\e^{2/3}).
 $$
 
(iii)  The $dt$ terms involving $F_\e$ (but not its derivatives).  Using the bounds  $|\chi_\e'(h)| \le c_1/\e$ and $|\chi_\e''(h)| \le c_2/\e^2$ together with \eqref{AB growth}, these terms have magnitude 
  \begin{align*}
  \Bigl({\cal Q}(r^{2/3}) + \e^{-2/3} {\cal Q}(1)\Bigr)\Bigl( \e{\cal Q}(1) + {\cal Q}(r^2)\Bigr)
  = \e^{-2/3} {\cal Q}(r^2) + {\cal Q}(r^{8/3})+ \e^{1/3} {\cal Q}(1)+ \e {\cal Q}(r^{2/3}) 
 \end{align*} 
as $r \to 0$.  Now consider $\limsup_{n \to \infty} \frac{1}{n} \left|\int_0^n \cdot ds\right|$ for each of the terms, and note that the integrands are all 0 outside the set $\{(x,y): \e \le h(x,y) \le 2\e\}$.  Using Lemmas \ref{lem ergod} and \ref{lem gint} we get 
  $$ \e^{-2/3} {\cal O}(\e^2) + {\cal O}(\e^{4/3}) + \e^{1/3} {\cal O}(\e)+ \e {\cal O}(\e^{4/3}) = {\cal O}(\e^{4/3}).
  $$
 
(iv)  The $dt$ terms involving the derivatives $V_1.F_\e$ and ${\cal N}_i F_\e$ for $5 \le i \le 7$.  Using the bound  $|\chi_\e'(h)| \le c_1/\e$ together with \eqref{MN growth} and \eqref{AB growth}, these terms have magnitude 
  \begin{align*}
 \lefteqn{{\cal Q}(r) \Bigl( \e {\cal Q}(r^{-1/3}) + \e^{-2/3} \bigl(\e^{1/3}{\cal Q}(r^{1/3})+\e{\cal Q}(r^{-1})+\e^{5/3}{\cal Q}(r^{-7/3})\Bigr)} \hspace{20ex}  \\
  &=  \e{\cal Q}(r^{2/3}) + \e^{-1/3} {\cal Q}(r^{4/3}) + \e^{1/3}{\cal Q}(1) + \e {\cal Q}(r^{-4/3})
  \end{align*} 
as $r \to 0$.   Now consider $\limsup_{n \to \infty} \frac{1}{n} \left|\int_0^n \cdot ds\right|$ for each of the terms, and note that the integrands are all 0 outside the set $\{(x,y): \e \le h(x,y) \le 2\e\}$.  Using Lemmas \ref{lem ergod} and \ref{lem gint} we get 
  $$\e{\cal O}(\e^{4/3}) + \e^{-1/3} {\cal O}(\e^{5/3}) + \e^{1/3}{\cal O}(\e) + \e {\cal O}(\e^{1/3}) = {\cal O}(\e^{4/3}).
  $$
  
(v)  The integrands in each of the $dW_t$ terms are bounded by $L^2(\mu)$ functions of $(x,y)$, so by Lemma \ref{lem ergod} they satisfy $\lim_{n \to \infty} \frac{1}{n} \int_0^n \cdot \,dW_s= 0 $ almost surely.

Together, all the $dW_t$ integrals have $\lim_{t \to \infty} \frac{1}{t}\int_0^t \cdots dW_s = 0$ almost surely and all the $dt$ integrals except the one with $\chi_\e(h_t)(Q_0(x_t,y_t,\phi_t)-\gamma G(h_t))$ have bounds on the almost sure value of the $\limsup_{n \to \infty} \frac{1}{n}\left|\int_0^t \cdots ds\right|$ and the bounds are non-random and do not depend on the initial value $(x_0,y_0,v_0)$ with $v_0 \neq 0$.  As $\e \to 0$ the biggest of these bounds is ${\cal O}(\e^{2/3})$, and this completes the proof of Theorem \ref{thm QG} \endproof

\section{Proof of Theorem \ref{thm main}} \label{sec proof}
  
\begin{proposition} \label{prop Geps}  For the system \eqref{add eps} there exists $K_3$ such that for all $(x_0,y_0)$  
  $$
  \limsup_{t \to \infty} \left| \frac{1}{t}\int_0^t \chi_\e(h_s) G(h_s)ds -  \int_{\R^2} G(h)d\mu(x,y)  \right|  \le K_3 \e^{4/3}
  $$
almost surely.
 \end{proposition}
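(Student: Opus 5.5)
Since $\chi_\e(h_s)G(h_s)$ is a deterministic function of $(x_s,y_s)$ alone, I will use the ergodic theorem for the underlying process $\{(x_t,y_t): t \ge 0\}$ and then estimate a deterministic integral. By Lemma \ref{lem GG}, $G(h) \sim (3\sigma/4)^{2/3}h^{1/3}$ as $h \to 0$ and $G(h) \sim (3\gamma_1\sigma^2/10)^{1/3}h^{-1/6}$ as $h \to \infty$. Hence $G \circ H$ is bounded and continuous on $\R^2$, so $g_\e(x,y) = \chi_\e(H(x,y))G(H(x,y))$ belongs to $L^1(\mu)$ for every $\e > 0$. By Lemma \ref{lem ergod}(ii), which rests on the ergodic theorem together with absolute continuity of the time $1$ transition probability with respect to $\mu$, we get
$$
\lim_{t\to\infty}\frac{1}{t}\int_0^t \chi_\e(h_s)G(h_s)\,ds = \int_{\R^2}\chi_\e(H(x,y))\,G(H(x,y))\,d\mu(x,y)
$$
almost surely, for every initial point $(x_0,y_0)$. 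Note that this is an actual limit, not only a limsup: the argument in the proof of Proposition \ref{prop FK cart} applies verbatim to the process on $\R^2$.

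It then remains to bound the deterministic difference
$$
\Bigl|\int_{\R^2}\bigl(1-\chi_\e(H)\bigr)G(H)\,d\mu\Bigr| \le \int_{\{h \le 2\e\}} G(H(x,y))\,d\mu(x,y).
$$
On the set $\{h\le 2\e\}$ we have $r^2 \le 2h \le 4\e$, and also $G(h) \le C h^{1/3} \le C' r^{2/3}$. So the integrand is ${\cal O}(r^{2/3})$ on a disc of radius ${\cal O}(\e^{1/2})$. Because the density $\rho$ in \eqref{density} is bounded near the origin, the integral is bounded by a constant times
$$
\int_0^{2\e^{1/2}} r^{2/3}\, r\,dr = {\cal O}(\e^{5/6}\cdot\e^{1/2})\cdot \e^{0}\ \text{— more precisely } {\cal O}\bigl((\e^{1/2})^{8/3}\bigr)={\cal O}(\e^{4/3}).
$$
This is exactly the case $k = 2/3$ of \eqref{gint2} in Lemma \ref{lem gint}, used on the region $h \le 2\e$; the same computation as in Lemma \ref{lem gint} covers the full disc and not just the annulus. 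The upshot is a bound $K_3\e^{4/3}$ that holds uniformly for $0<\e\le 1$, since for $\e$ bounded away from $0$ the difference is in any case bounded by $\sup G$.

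There is no serious obstacle here. The only points needing care are checking that $G\circ H$ is $\mu$-integrable and continuous at the origin (this is where the small-$h$ asymptotic of Lemma \ref{lem GG}(ii) is used), and justifying that the ergodic limit holds for all initial points and not merely $\mu$-almost all. The latter is handled exactly as in Proposition \ref{prop FK cart} and Lemma \ref{lem ergod}.
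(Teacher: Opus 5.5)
Your proposal is correct and follows essentially the same route as the paper. You apply the ergodic theorem, extended to every initial point via absolute continuity of the time-$1$ transition probability, to the $\mu$-integrable function $\chi_\varepsilon(H)G(H)$, and then bound the deterministic remainder $\int_{\{h\le 2\varepsilon\}}G(H)\,d\mu$ by ${\cal O}(\varepsilon^{4/3})$ using $G(h)={\cal O}(r^{2/3})$ near the origin together with boundedness of the density $\rho$. Your extra checks on boundedness of $G\circ H$ and uniformity over $0<\varepsilon\le 1$ only make explicit details that the paper leaves implicit.
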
   

\proof  By Lemma \ref{lem GG} we have $G(H(x,y)) \in L^1(\mu)$.  The almost sure ergodic theorem implies that for all $(x_0,y_0)$ we have
   $$
   \frac{1}{t}\int_0^t \chi_\e(h_s) G(h_s)ds = \int _{\R^2} \chi_\e(h) G(h) d\mu(x,y)
   $$
almost surely.  Then
    \begin{align*}
  \limsup_{t \to \infty} \left| \frac{1}{t}\int_0^t \chi_\e(h_s) G(h_s)ds -  \int_{\R^2} G(h)d\mu(x,y)  \right|  &  = \left| \int_{\R^2}(1-\chi_\e(h))G(h)d\mu(x,y) \right| \\
  & \le \int_{h \le 2 \e} G(h)d\mu(x,y) \\
  & = {\cal O}(\e^{4/3})
  \end{align*}
as $\e \to 0$, where the last estimate uses $G(h) = {\cal O}(h^{1/3}) = {\cal O}(r^{2/3})$ as $r \to 0$.  \endproof

\n {\it Proof of Theorem \ref{thm main}}  Let $0 < \e \le 1$.  For any $(x_0,y_0,v_0)$ with $v_0 \neq 0$ we have
   \begin{align*}
   \left| \lambda(\e) - \e^{2/3} \gamma_0 \int_{\R^2} G(h)d\mu(x,y) \right| & 
    \le \left|\lambda(\e) - \frac{1}{n} \log \|v_n\|\right|+ \frac{1}{n}\Big|\log\|v_n\| - \Lambda_\e(n) \Big| \\
    & \quad + \frac{1}{n} \left| \Lambda_\e(n) - \e^{2/3} \int_0^n \chi_\e(h_s)Q_0(x_s,y_s,\phi_s)ds \right|\\
    & \quad + \frac{\e^{2/3}}{n} \left|\int_0^n \chi_\e(h_s)\bigl(G(h_s)-Q(x_s,y_s,\phi_s)\bigr)ds \right|\\
    & \quad +  \e^{2/3} \left| \frac{1}{n}\int_0^n \chi_\e(h_s) G(h_s)ds -  \int_{\R^2} G(h)d\mu(x,y)  \right| \end{align*}
for all $n \ge 1$.  Taking the almost sure $\limsup$ of the right side as $n \to \infty$, and using Propositions \ref{prop FK cart} and \ref{prop equiv}, Theorems \ref{thm lamQ} and \ref{thm QG}, and finally Proposition \ref{prop Geps} we obtain
   $$
   \left| \lambda(\e) - \e^{2/3} \gamma_0 \int_{\R^2} G(h)d\mu(x,y) \right|
    \le 0+ 0 + K_1 \e^{4/3}+ K_2 \e^{4/3} + K_3 \e^{2} \le (K_1+K_2+K_3)\e^{4/3},
    $$
as required.  \endproof

\appendix

 \section{Appendix}\label{sec appendix}

The results in the appendix apply to the system \eqref{nonlin} with a general $C^\infty$ potential ${\cal U}(x)$.  Note that the conditions imposed on ${\cal U}$ in Lemmas \ref{lem Hor} and \ref{lem cont} are satisfied when ${\cal U}(x) = ax^2/2+bx^4/4$ with $b > 0$, and so these Lemmas can be used in the proof of Proposition \ref{prop FK cart}.  
The system \eqref{nonlin} can be written
  \begin{align*}
   dx_t & = y_tdt \\
  dy_t & = (-{\cal U}'(x_t)- \beta y_t)dt+ \sigma \circ dW_t
   \end{align*}
with parameters $\beta$ and $\sigma \neq 0$.
Linearizing gives
   \begin{equation} \label{v}
   dv_t = \begin{bmatrix} 0 & 1 \\ -{\cal U}''(x_t) & -\beta \end{bmatrix} v_t dt.
  \end{equation}
Putting $v = \|v\| \begin{bmatrix} \cos \theta \\ \sin \theta \end{bmatrix}$ we get 
 \begin{equation} \nonumber 
   d\theta_t  = \Bigl(-\sin^2 \theta_t - {\cal U}''(x_t) \cos^2 \theta_t - \beta \sin\theta_t \cos \theta_t \Bigr)dt .  
   \end{equation}
The process $\{(x_t,y_t,\theta_t): t \ge 0\}$ satisfies an SDE of the form 
    \begin{equation} \label{z}
    dz_t = X_0(z_t)dt+X_1(z_t)\circ dW_t
     \end{equation}
 with
      $$
  X_0(x,y,\theta) = \begin{bmatrix} y \\ -{\cal U}'(x) -\beta y \\-\sin^2 \theta - {\cal U}''(x) \cos^2 \theta - \beta \sin\theta \cos \theta \end{bmatrix}  \quad \mbox{ and } \quad  X_1(x,y,\theta) = \begin{bmatrix} 0 \\ \sigma \\ 0 \end{bmatrix}.
  $$ 

\subsection{Parabolic H\"{o}rmander condition}

Let $L = L(X_0, X_1)$ be the Lie algebra generated by the vector fields $X_0$ and $X_1$ and let $L_0$ be the ideal in $L$ generated by $X_1$.  We say the system satisfies the parabolic H\"{o}rmander condition, see for example Hairer \cite{Hai}, if 
   $$
    \mbox{span}\{Y(x,y,\theta): Y \in L_0\} = \R^3 \quad \mbox{ for all }(x,y,\theta) \in \R^2 \times \R/(2 \pi \Z)
    $$

\begin{lemma} \label{lem Hor} Assume $\sigma \neq 0$ and ${\cal U}^{(4)}(x) \neq 0$ for all $x \in \R$.  Then the  system \eqref{z} satisfies the parabolic H\"{o}rmander condition. 
\end{lemma}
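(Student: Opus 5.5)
The plan is to compute iterated Lie brackets of $X_1=\sigma\partial_y$ with $X_0$ explicitly. Every bracket containing $X_1$ lies in $L_0$. The aim is three brackets whose values span $\R^3$ at each point $(x,y,\theta)$. Two cases are needed: $\cos\theta\neq 0$, and $\cos\theta=0$, where the $\theta$-components of the low-order brackets vanish.

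\emph{Step 1: the first brackets.} Since $X_1$ is constant,
$$Y_2:=[X_1,X_0]=\sigma\,\partial_yX_0=\sigma\bigl(\partial_x-\beta\partial_y\bigr).$$
The pair $X_1,Y_2$ already spans the $(x,y)$-plane at every point, because $\sigma\neq0$. So it only remains to produce a vector with nonzero $\partial_\theta$-component.

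\emph{Step 2: the case $\cos\theta\neq 0$.} Next,
$$Y_3:=[Y_2,X_0]=\sigma\bigl(-\beta,\;-{\cal U}''(x)+\beta^2,\;-{\cal U}'''(x)\cos^2\theta\bigr).$$
Then, since $Y_2$ is constant,
$$Z:=[Y_2,Y_3]=\sigma\,\partial_xY_3=\sigma^2\bigl(0,\;-{\cal U}'''(x),\;-{\cal U}^{(4)}(x)\cos^2\theta\bigr).$$
Its $\theta$-component is nonzero whenever $\cos\theta\neq0$, by the hypothesis ${\cal U}^{(4)}\neq0$. Hence $X_1,Y_2,Z$ span $\R^3$ there.

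\emph{Step 3: the case $\cos\theta=0$ (main obstacle).} This is the hard case. Write $Z=-\sigma^2{\cal U}'''(x)\partial_y+W$ with $W=-\sigma^2{\cal U}^{(4)}(x)\cos^2\theta\,\partial_\theta$. The field $W$ vanishes to second order in $\cos\theta$.

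A direct check shows that $[X_0,Z]$ still has zero $\theta$-component on $\{\cos\theta=0\}$, so I will go to $[X_0,[X_0,Z]]$. The idea is that on $\{\cos\theta=0\}$ the $\theta$-component of $X_0$ equals $-1$. Each bracket with $X_0$ therefore acts on the coefficient $\cos^2\theta$ essentially like $\partial_\theta$, up to terms that still vanish there. After two brackets the $\theta$-component should be
$$\pm 2\sigma^2{\cal U}^{(4)}(x)\bigl(X_0^\theta\bigr)^2=\pm2\sigma^2{\cal U}^{(4)}(x)\neq0 .$$

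I also have to check that the $-\sigma^2{\cal U}'''\partial_y$ part contributes nothing to the $\theta$-component. One bracket gives none, because $X_0^\theta$ does not depend on $y$. At the second bracket the only possible contribution comes through $\partial_xX_0^\theta=-{\cal U}'''(x)\cos^2\theta$, which vanishes when $\cos\theta=0$.

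Carrying out this second-order bookkeeping carefully is the step I expect to be the main computational obstacle. In particular, I must track all terms carrying factors $\cos\theta$ or $\sin\theta\cos\theta$ and confirm that only the term from $\partial_\theta^2(\cos^2\theta)=2$ survives. Once that is done, $X_1$, $Y_2$ and $[X_0,[X_0,Z]]$ span $\R^3$ on $\{\cos\theta=0\}$, and together with Step 2 this gives the parabolic H\"ormander condition everywhere.
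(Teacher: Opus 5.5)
Your proof is correct. The brackets you use are in fact the paper's own. Up to sign, powers of $\sigma$, and the paper's normalization $\widehat X_2=[\widehat X_1,\widehat X_0]+\beta\widehat X_1$, your $Z$, $[X_0,Z]$ and $[X_0,[X_0,Z]]$ are the projections of its $\widehat X_3$, $\widehat X_4=[\widehat X_3,\widehat X_0]$ and $\widehat X_5=[\widehat X_4,\widehat X_0]$.

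What differs is how the spanning is checked. The paper lifts to $(x,y,v)\in\R^2\times\R^2$. There these three fields have $v$-components $-{\cal U}^{(4)}Dv$, $-{\cal U}^{(4)}[C,D]v+(\cdot)Dv$ and $-{\cal U}^{(4)}[C,[C,D]]v+(\cdot)Dv$. The paper then uses the matrix fact $\begin{bmatrix}0&1\\-1&0\end{bmatrix}\in\mbox{span}\{D,[C,D],[C,[C,D]]\}$. This gives the rotation field, hence $\partial_\theta$, at every point at once, with no case distinction and no trigonometric bookkeeping. You work directly in angle coordinates. There the $\theta$-component of $Z$ is $-\sigma^2{\cal U}^{(4)}\cos^2\theta$, which degenerates at $\cos\theta=0$ and forces your case split. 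In exchange you need only $Z$ off $\{\cos\theta=0\}$ and one local computation on that set, with no lift and no Lie-algebra identity.

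The computation you deferred goes through as you predicted. The $\theta$-component of $[X_0,Z]$ is $2\sigma^2{\cal U}^{(4)}(x)X_0^\theta\cos\theta\sin\theta+{\cal O}(\cos^2\theta)$. Its $x$- and $y$-derivatives vanish on $\{\cos\theta=0\}$, and so does the term $-[X_0,Z]\cdot\nabla X_0^\theta$. Hence on that set the $\theta$-component of $[X_0,[X_0,Z]]$ equals $X_0^\theta\,\partial_\theta\bigl(2\sigma^2{\cal U}^{(4)}X_0^\theta\cos\theta\sin\theta\bigr)=-2\sigma^2{\cal U}^{(4)}(x)\neq 0$.
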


\proof  Without loss of generality we may assume $\sigma = 1$.  Instead of looking at the vector fields $X_0$ and $X_1$  for the sde for $(x,y,\theta)$, it is easier to calculate the Lie brackets for the vector fields for $(x,y,v)\in \R\times\R \times \R^2$ given by 
$$
  \widehat{X}_0(x,y,v) = \begin{bmatrix} y \\ -{\cal U}'(x) -\beta y \\(C-{\cal U}''(x)D)v \end{bmatrix}  \quad \mbox{ and } \quad  \widehat{X}_1(x,y,v) = \begin{bmatrix} 0 \\ 1 \\ 0 \end{bmatrix},
  $$ 
where 
  $$
  C = \begin{bmatrix} 0 & 1 \\0 & -\beta \end{bmatrix} \quad \mbox{ and } \quad D = \begin{bmatrix} 0 & 0 \\1 & 0 \end{bmatrix}.
  $$
Note that
  $$
  [C,D]= \begin{bmatrix} 1 & 0 \\ -\beta & -1 \end{bmatrix} \quad \mbox{and} \quad \bigl[C,[C,D]\bigr] = \begin{bmatrix} -\beta & -2 \\ \beta^2 & \beta \end{bmatrix} \quad \mbox{and} \quad \bigl[D,[C,D]\bigr] = 2D.
  $$
It follows that  
   $$
   \begin{bmatrix} 0 & 1 \\-1 & 0 \end{bmatrix} \in \mbox{span}\Big\{ D, [C,D], \bigl[ C,[C,D]\bigr]\Big\}.
   $$
 Direct calculation gives
    $$
   \widehat{X}_2(x,y,v):= [\widehat{X}_1,\widehat{X}_0](x,y,v) + \beta \widehat{X}_1(x,y,v) = \begin{bmatrix} 1 \\ 0 \\ 0 \end{bmatrix}
    $$
and 
       $$
  \widehat{X}_3(x,t,v):=\Bigl[\widehat{X}_2,[\widehat{X}_2,\widehat{X}_0]\Bigr](x,y,v) = \begin{bmatrix} 0 \\-{\cal U}^{(3)}(x) \\ -{\cal U}^{(4)}(x) Dv \end{bmatrix}
   $$
and 
  \begin{align*}
\widehat{X}_4(x,y,v)& := [\widehat{X}_3,\widehat{X}_0](x,y,v) = \begin{bmatrix} -{\cal U}^{(3)}(x) \\[1ex] -\beta {\cal U}^{(3)}(x) + {\cal U}^{(4)}(x)y  \\[1ex] -{\cal U}^{(4)}(x)[C,D]v - {\cal U}^{(5)}(x)y Dv \end{bmatrix}.
    \end{align*}  
Finally the $v$ component of      
    \begin{align*}
\widehat{X}_5(x,y,v) := [\widehat{X}_4,\widehat{X}_0](x,y,v)
        \end{align*}
 is 
  \begin{align*}
  \lefteqn{ -{\cal U}^{(4)}(x)[C,[C,D]]v } \\
  & \quad + \Bigl(\bigl({\cal U}^{(3)}(x)\bigr)^2+{\cal U}^{(6)}(x)y - {\cal U}'(x){\cal U}^{(5)}(x)- \beta {\cal U}^{(5)}(x)y + 2{\cal U}''(x){\cal U}^{(4)}(x) \Bigr)Dv.
  \end{align*}
In the last line we use $\bigl[D,[C,D]\bigr] = 2D$.  It follows that for any given $(x,y,v)$ with ${\cal U}^{(4)}(x) \neq 0$ the span of the $v$ components of $\{\widehat{X}_j(x,y,v): j=3,4,5\}$ contains the vector $\begin{bmatrix} 0 & 1 \\-1 & 0 \end{bmatrix}v$.  Hence for any given $(x,y,\theta)$ with ${\cal U}^{(4)}(x) \neq 0$ the span of the $\theta$ components of $\{X_j(x,y,\theta): j=3,4,5\}$ contains the vector $1$.  Since the span of $X_1(x,y,\theta)$ and $X_2(x,y,\theta)$ is $\R^2 \times \{0\}$ it follows that the span of $\{X_j(x,y,\theta): j=1,2,3,4,5\}$ is $\R^3$ for all $(x,y,\theta)$ with ${\cal U}^{(4)}(x) \neq 0$ .  \endproof

       \subsection{Controllability}    

\begin{lemma}   \label{lem cont}  Suppose ${\cal U}''(x)$ is unbounded above.  Given any initial condition $(x_0,y_0, \theta_0) \in \R^2 \times \R/(2 \pi Z)$, any target point $(x_1,y_1,\theta_1) \in \R^2 \times \R/(2 \pi Z)$ and any $\e > 0$ and any $T \ge 0$ there exists a piecewise continuous $h: [0,T] \to \R$ such that the unique solution of 
     \begin{equation} \label{cont}
     (\dot{x}(t),\dot{y}(t),\dot{\theta}(t)) = X_0(x(t),y(t),\theta(t)) + h(t)X_1(x(t),y(t),\theta(t))
     \end{equation}
with initial condition $(x_0,y_0,\theta_0)$ satisfies $\|(x(T),y(T),\theta(T)) - (x_1,y_1,\theta_1)\| < \e$.
\end{lemma}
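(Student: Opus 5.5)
The plan is to use the structure of \eqref{cont}: the control $h$ enters only the $y$-equation, $x$ is driven by $y$, and $\theta$ is driven by $x$ through ${\cal U}''(x)$. Since $\dot x = y$ and $\dot y = -{\cal U}'(x) - \beta y + h$, the control can realize any $C^2$ path $x(t)$. Given $x(\cdot)$ we set $y = \dot x$ and $h = \ddot x + {\cal U}'(x) + \beta\dot x$. So the problem reduces to choosing a $C^2$ function $x:[0,T]\to\R$ with prescribed endpoint data $x(0)=x_0$, $\dot x(0)=y_0$, $x(T)=x_1$, $\dot x(T)=y_1$. This choice must steer the $\theta$-equation
$$\dot\theta = -\sin^2\theta - {\cal U}''(x(t))\cos^2\theta - \beta\sin\theta\cos\theta$$
from $\theta_0$ to within $\e$ of $\theta_1$. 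Strictly, ``piecewise continuous $h$'' means $x$ is $C^1$ with piecewise continuous second derivative, and that is allowed. (I read $T>0$; for $T=0$ the statement is degenerate.)

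The key observation is that $\theta$ can only move in one direction. When ${\cal U}''(x)\ge c>0$ and $\beta$ is fixed, the $\theta$-drift is strictly negative: the quadratic form $\sin^2\theta + c\cos^2\theta + \beta\sin\theta\cos\theta$ is positive definite once $c>\beta^2/4$. Moreover, by taking $x$ where ${\cal U}''(x)=L$ is very large, the drift near $\theta=\pm\pi/2$ stays of order $1$ while elsewhere it is of order $L$. The flow then rotates $\theta$ clockwise around the whole circle in time of order $L^{-1/2}$: the rotation rate is about $\sqrt{L}$, as for the harmonic oscillator $\ddot{x}+Lx=0$. The unboundedness of ${\cal U}''$ is exactly what makes this rotation arbitrarily fast.

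The construction has three phases, and I would carry them out in this order.

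\textbf{Phase 1.} On $[0,\tau]$ with $\tau$ small, move $(x,y)$ from $(x_0,y_0)$ to a point $(\bar x,0)$ where ${\cal U}''(\bar x)=L$ is huge. Likewise, on the final interval $[T-\tau,T]$, move from $(\bar x,0)$ to $(x_1,y_1)$. Both transfers use a fixed $C^2$ interpolating path.

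\textbf{Phase 2.} On the middle interval, hold $x\equiv\bar x$, which is achieved with $h={\cal U}'(\bar x)$ constant. Here $\theta$ obeys an autonomous one-dimensional flow that traverses the circle periodically with some period $P(L)\to 0$ as $L\to\infty$.

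\textbf{Phase 3.} The trouble is that the phase-1 transfers themselves take time $\tau$ and move $\theta$ by an amount we do not control directly. This displacement depends continuously on $\tau$ and the path, since the vector field is smooth and bounded on the compact set visited. The fix is to choose the phase-2 duration: let the middle interval be $[\tau,\tau+s]$ and make the final transfer start at time $\tau+s$. Then let the residual time $T-2\tau-s$ be absorbed by a further hold at $\bar x$, which we merge into phase 2. That is, fix the two transfers of length $\tau$, and use the full middle interval of length $T-2\tau$ at $\bar x$ with $L$ chosen appropriately. The final angle is $\Phi(\theta_0)$ for a composition of three circle flows, and the middle factor is the time-$(T-2\tau)$ map of a rotation-like flow whose total winding goes to infinity as $L\to\infty$.

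To hit $\theta_1$, I vary $L$ continuously (vary $\bar x$ over an interval on which ${\cal U}''$ increases through large values, which unboundedness plus continuity provides). The middle angle, viewed as a lift to $\R$, then varies continuously and its range covers an interval of length greater than $2\pi$. By the intermediate value theorem, some choice makes the angle before the final transfer equal to $\Psi^{-1}(\theta_1)$, where $\Psi$ is the (homeomorphic) circle map of the final transfer. This actually gives exact hitting. The obstacle is that the transfer paths depend on $\bar x$, so $\Psi$ depends on $\bar x$ too. I would handle this by continuity: the whole end-angle is a continuous function of $\bar x$, and its lift changes by more than $2\pi$ between two values of $\bar x$ because the middle winding dominates. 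Verifying this uniformity, namely that the transfer contributions stay bounded while the middle winding grows, is the main step to check carefully. It holds because the transfers run over a fixed short time $\tau$, and the circle maps they induce have lifts within $2\pi$ of the identity displacement bounds, however large ${\cal U}''$ is along them.
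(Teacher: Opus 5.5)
\textbf{The crux step rests on a false claim.} Your three-phase plan (transfer, hold at $\bar x$, transfer, then an intermediate value argument in $\bar x$) is reasonable and would even give exact hitting. The problem is the step you yourself flag as the crux: the justification you give for it is false.

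With $\tau$ fixed and $\bar x$ pushed out to where ${\cal U}''(\bar x)=L$ is huge, the transfer paths end (or start) at $\bar x$. For any natural fixed-$\tau$ interpolation they then spend a time comparable to $\tau$ in the region where ${\cal U}''$ is of order $L$. For instance, with ${\cal U}''(x)=1+3x^2$ and the piecewise linear $y$ interpolation, $x(t)$ stays above roughly $(x_0+\bar x)/2$ for half of the transfer interval. There $\theta$ rotates at rate of order $\sqrt{L}$, which is the very mechanism you exploit in the middle phase. So the transfers wind $\theta$ through an angle of order $\tau\sqrt{L}$, and their lifts do not have displacement bounded uniformly in $\bar x$.

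What is true for every lift of a circle homeomorphism is only that its displacement \emph{varies} by less than $2\pi$ around the circle. If you instead normalize the transfer lifts to have displacement in $[0,2\pi)$, you lose the continuity in $\bar x$ that the intermediate value theorem needs. So, as written, the assertion that the middle winding dominates is not established.

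\textbf{The missing idea is a one-sided bound.} Since $g(x,\pm\pi/2)=-1$ for every $x$, the lift of $\theta$ can never cross a level $\pi/2+k\pi$ upwards. Hence along any path $x(\cdot)$ and over any time interval the lift increases by less than $\pi$. The transfer displacements therefore satisfy $D_1,D_3<\pi$ for every $\bar x$. The total displacement is then less than $2\pi+D_2(\bar x)$, which tends to $-\infty$ along a sequence with ${\cal U}''(\bar x)\to\infty$. With this, your continuity and intermediate value argument goes through; large winding in the transfers is harmless because it has the same sign as the winding in the middle phase.

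\textbf{How the paper avoids the issue.} It reverses the order of choices. It first fixes $\hat x$ so that the pure hold $\dot\theta=g(\hat x,\theta)$ carries $\theta_0$ to $\theta_1$ in time exactly $T$. Only then does it shrink the transfer time $\delta\to0$, using piecewise linear $y$ for which $x$ stays bounded uniformly in $\delta$. Then $g$ is bounded along the transfers, they perturb $\theta$ by ${\cal O}(\delta)$, and a Lipschitz estimate for the hold flow finishes. This yields approximate rather than exact controllability, which is all the lemma claims and all the support theorem argument needs.
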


\proof The proof adapts methods used by Coti Zelati and Hairer \cite[Proposition 3.3]{CZH21}.  Since $y$ in \eqref{cont} can be completely controlled by $h$, the proof reduces to finding a
piecewise $C^1$ function $y:[0,T] \to \R$ with $y(0) = y_0$ and $y(T) = y_1$ so that the solution of the ODE
    \begin{align}  \label{ode}
     \begin{split}
       \dot{x}(t) &= y(t) \\
       \dot{\theta}(t) & = g(x(t),\theta(t))
       \end{split}  \end{align}
where  
  $$g(x,\theta) = -\sin^2 \theta - {\cal U}''(x) \cos^2 \theta - \beta \sin\theta \cos \theta
  $$ sends $(x_0,\theta_0)$ at time $t = 0$ to an $\e$-neighborhood of $(x_1,\theta_1)$ at time $T$.

The matrix in \eqref{v} has eigenvalues $-\beta/2\pm i \sqrt{{\cal U}''(x)- \beta^2/4}$ so long as ${\cal U}''(x) > \beta^2/4$.  Thus for fixed $x$, if ${\cal U}''(x) > \beta^2/4$ then $g(x,\theta) < 0$ for all $\theta$ and the ODE $\dot{\theta} = g(x,\theta)$ has period $2\pi/\sqrt{{\cal U}''(x)-\beta^2/4}$.  

Choose $\hat{x}$ so that the ODE $\dot{\theta} = g(\hat{x},\theta)$ sends $\theta_0$ to $\theta_1$ in exactly time $T$.  This is possible because ${\cal U}''(x) \to \infty$.  Let $\Phi(t,\theta)$ denote the time $t$ flow along $g(\hat{x},\cdot)$.  There exists $K_1$ (depending on $\hat{x}$) such that
   $$
   |\Phi(s,\theta)-\Phi(t,\phi)| \le K_1(|s-t| + |\theta-\phi|).
   $$
 
\begin{lemma} \label{lem path}  Given $(x_0,y_0)$ and $(x_1,y_1)$ for all $\delta > 0$ there exists a piecewise linear function $\{y(t): 0 \le t \le \delta\}$ with $y(0) = y_0$ and $y(\delta) = y_1$ such that $\int_0^\delta y(s)ds = x_1-x_0$.  Moreover the path $x(t) = x_0+\int_0^t y(s)ds$ remains bounded as $\delta \to 0$.
\end{lemma}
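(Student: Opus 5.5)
We need to construct a piecewise linear $y$ on $[0,\delta]$ with prescribed endpoint values $y_0,y_1$ and prescribed integral $x_1-x_0$, and show the resulting $x$-path stays bounded as $\delta\to 0$. The plan is an explicit three-segment (trapezoid-with-spike) construction. Split $[0,\delta]$ into $[0,\delta/3]$, $[\delta/3,2\delta/3]$ and $[2\delta/3,\delta]$, and let $y$ be the piecewise linear function with nodes
$$
y(0)=y_0,\quad y(\delta/3)=c,\quad y(2\delta/3)=c,\quad y(\delta)=y_1,
$$
where the constant $c=c(\delta)$ is to be chosen. By the trapezoid rule,
$$
\int_0^\delta y(s)\,ds=\frac{\delta}{3}\Bigl(\frac{y_0+c}{2}+c+\frac{c+y_1}{2}\Bigr)=\frac{\delta}{6}\bigl(y_0+y_1+4c\bigr).
$$
This is an affine function of $c$ with nonzero slope $2\delta/3$, so setting it equal to $x_1-x_0$ gives the unique choice
$$
c=\frac{3(x_1-x_0)}{2\delta}-\frac{y_0+y_1}{4}.
$$
This settles existence, and by construction $y$ is piecewise linear with $y(0)=y_0$ and $y(\delta)=y_1$.

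For boundedness, note that $y$ is monotone on the first and last thirds and constant on the middle third. Hence
$$
|y(s)|\le \max(|y_0|,|y_1|,|c|)\le \frac{3|x_1-x_0|}{2\delta}+|y_0|+|y_1|
$$
for all $0\le s\le\delta$. Then for $0\le t\le\delta$,
$$
|x(t)-x_0|\le \int_0^\delta|y(s)|\,ds\le \frac{3}{2}|x_1-x_0|+\delta\bigl(|y_0|+|y_1|\bigr),
$$
which is bounded uniformly for $0<\delta\le 1$. There is no real obstacle here. The only point to watch is that $|y|$ blows up like $\delta^{-1}$, but this is harmless because only its integral over an interval of length $\delta$ enters the estimate for $x(t)$.
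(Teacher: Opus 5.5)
Your proof is correct and is essentially the paper's argument. The paper uses two linear pieces with a single interior node $y(\delta/2)=a=2(x_1-x_0)/\delta-(y_0+y_1)/2$ instead of your plateau on the middle third, and concludes boundedness exactly as you do: $\sup|y|={\cal O}(1/\delta)$ is integrated over an interval of length $\delta$, and your estimate $|x(t)-x_0|\le \frac{3}{2}|x_1-x_0|+\delta(|y_0|+|y_1|)$ simply makes that step explicit.
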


\proof Take $y(t)$ to be linear on $[0,\delta/2]$ and $[\delta/2,\delta]$ with $y(0) = y_0$ and $y(\delta/2) = a$ and $y(\delta) = y_1$.  Then we require
   $$
   x_1-x_0 = \int_0^\delta y(t)dt = \frac{\delta(y_0 +2a+y_1)}{4}
   $$
so that 
    $$
      a = \frac{2(x_1-x_0)}{\delta} -\frac{(y_0+y_1)}{2}.
      $$
Since $a$ grows at most like $1/\delta$ then $\sup\{|y(t)|: 0 \le t \le \delta\}$ grows at most like $1/\delta$, and so $\sup\{|x(t)|: 0 \le t \le \delta\}$ is bounded as $\delta \to 0$.
\endproof

Now for $0 < \delta < T/2$ define $\{y_\delta(t): 0 \le t \le \delta\}$ to be a path as in Lemma \ref{lem path} from $(x_0,y_0)$ to $(\hat{x},0)$, and define $\{\tilde{y}_\delta(t): 0 \le t \le \delta\}$ to be a path as in Lemma \ref{lem path} from $(\hat{x},0)$ to $(x_1,y_1)$.  Then define $\{y_\delta(t): 0 \le t \le T\}$ by   
     $$
     y_\delta(t) = \begin{cases} y_\delta(t) & \mbox{ if } 0 \le t < \delta\\
                      0 & \mbox{ if } \delta \le t \le T-\delta \\
                      \tilde{y}_\delta(t-(T-\delta)) & \mbox{ if } T-\delta \le t \le T
                      \end{cases}
     $$
and consider the solution $\{(x_\delta(t),\theta_\delta(t)): 0 \le t \le T\}$ of \eqref{ode} with $y(t)$ replaced by $y_\delta(t)$ and with initial condition $(x_0,\theta_0)$.  It is clear that $x_\delta(T) = x_1$.  It remains to show that $|\theta_\delta(T) - \theta_1| < \e$.  

Since $x_\delta(t)$ stays bounded as $\delta \to 0$ there exists $K$ such that $\sup_{0 \le t \le T}|g(x_\delta(t),\theta_\delta(t)| \le K$ for all $0 < \delta < T/2$.  We have
      $$
      |\theta_\delta(\delta) - \theta_0| \le K\delta
      $$
and
  $$
  |\theta_\delta(T-\delta) - \theta_1| = |\Phi(T-2\delta,\theta_\delta(\delta) - \Phi(T,\theta_0)| \le K_1(2\delta + |\theta_\delta(\delta) - \theta_0|) \le K_1(2+K)\delta
  $$
and 
 $$
 |\theta_\delta(T) - \theta_1| \le |\theta_\delta(T) - \theta_\delta(T-\delta)| + |\theta_\delta(T-\delta)-\theta_1| 
 \le K\delta+ K_1(2+K)\delta.
 $$
So given $\e > 0$ choose $\delta > 0$ so that $ (K+ K_1(2+K))\delta < \e$,   
and we are done.  \endproof

\bibliographystyle{plain}
\bibliography{shear.ref}

\end{document}